\newtheorem{theorem}{Theorem}[section]
\newtheorem{lemma}[theorem]{Lemma}
\newtheorem{corollary}[theorem]{Corollary}
{\theoremstyle{definition}
\newtheorem{remark}[theorem]{Remark}}
\title{Sums of three squares in $\mathbf{Q}(\sqrt{3})$, and in $\mathbf{Q}(\sqrt{17})$}
\author{Shigeaki Tsuyumine}
\address{Mie University, Faculty of Education, Department of Mathematics, Tsu, 514-8507, Japan}
\email{tsuyu@edu.mie-u.ac.jp}
\subjclass[2010]{11F41, 11E25, 11R29}
\keywords{Hilbert modular form, class number, Shimura lift}
\begin{document}

\begin{abstract}The numbers of representations of totally positive integers as sums of three integer squares in $\mathbf{Q}(\sqrt{3})$ and in $\mathbf{Q}(\sqrt{17})$, are studied by using Shimura lifting map of Hilbert modular forms. We show the following results.  In case of $\mathbf{Q}(\sqrt{3})$, a totally positive integer $a+b\sqrt{3}$ is represented as a sum of three integer squares if and only if $b$ is even. In case of $\mathbf{Q}(\sqrt{17})$, a totally positive integer is represented as a sum of three integer squares if and only if it is not in the form $\pi_{2}^{2e}\pi_{2}'^{2e'}\mu$ with $\mu\equiv7\pmod{\pi_{2}^{3}}$ or $\mu\equiv7\pmod{\pi_{2}'^{3}}$ where $\pi_{2},\pi_{2}'$ are prime elements with $2=\pi_{2}\pi_{2}'$. A similar result as Gauss's three squares theorem in both cases of $\mathbf{Q}(\sqrt{3})$ and $\mathbf{Q}(\sqrt{17})$, and as its application, tables of class numbers of their totally imaginary extensions are given.
\end{abstract}

\maketitle
\section{Introduction}
H.~Maass \cite{Maass} showed that all totally positive integers in $\mathbf{Q}(\sqrt{5})$ are expressed as a sum of three integer squares in $\mathbf{Q}(\sqrt{5})$ by giving the explicit formula for the numbers $r_{3,\mathbf{Q}(\sqrt{3})}(\alpha)$ of representations for totally positive integers $\alpha$. The formula involves class numbers of totally imaginary quadratic extensions $\mathbf{Q}(\sqrt{5},\sqrt{-\alpha})$ of $\mathbf{Q}(\sqrt{5})$. Since the numbers of representations are calculable, by using Maass's formula H.~Cohn \cite{Cohn} gave a table of class numbers of totally imaginary quadratic extensions of $\mathbf{Q}(\sqrt{5})$. In our previous paper \cite{Tsuyumine4} we showed, by using Shimura lifting map, that any totally positive integer $a+b\sqrt{2}\ (a,b\in\mathbf{Z})$ in $\mathbf{Q}(\sqrt{2})$ is expressed as a sum of three integer squares if and only if $b$ is even. Further we gave the formula for the number $r_{3,\mathbf{Q}(\sqrt{2})}(a+b\sqrt{2})$ representing  $a+b\sqrt{2}$ as sums of three integer squares which involves the class number of  a totally imaginary quadratic extension $\mathbf{Q}(\sqrt{2},\sqrt{-a-b\sqrt{2}})$ of $\mathbf{Q}(\sqrt{2})$.

In the present paper we are concerned with the fields $\mathbf{Q}(\sqrt{3})$ and $\mathbf{Q}(\sqrt{17})$. By using Shimura lifting map of Hilbert modular forms again, we give the following results. A totally positive integer $a+b\sqrt{3}\in \mathbf{Q}(\sqrt{3})$ is expressed as a sum of three integer squares if and only if $b$ is even. The formula for $r_{3,\mathbf{Q}(\sqrt{3})}(\alpha\nu^{2})$ with totally positive square-free $\alpha$ and with products $\nu$ of totally positive prime elements is obtained, which gives as its application, a table of class numbers of totally imaginary quadratic extensions of $\mathbf{Q}(\sqrt{3})$. As for $\mathbf{Q}(\sqrt{17})$, let $\pi_{2}:=\tfrac{5+\sqrt{17}}{2},\pi_{2}':=\tfrac{5-\sqrt{17}}{2}$. A totally positive integer is expressed as a sum of three integer squares if and only if it is not in the form $\pi_{2}^{2e}\pi_{2}'^{2e'}\mu$ with nonnegative integers $e,e'$ and with $\mu\equiv7\pmod{\pi_{2}^3}$ or $\mu\equiv7\pmod{\pi_{2}'^3}$. The formula for $r_{3,\mathbf{Q}(\sqrt{17})}$ is obtained as well as a table of class numbers of totally imaginary quadratic extensions of $\mathbf{Q}(\sqrt{17})$. 

To see how the Shimura lifting map works, we illustrate the case of rational integers (see \cite{Tsuyumine1,Tsuyumine2,Tsuyumine3}). Let $\mathfrak{H}$ be the upper half plane $\{z\in\mathbf{C}\mid \Im z>0\}$, and let $\mathbf{e}(z):=\exp(2\pi\sqrt{-1}z)$. For a discriminant $D$ of a quadratic field, $\chi_{D}$ denotes the Kronecker-Jacobi-Legendre symbol. If $a$ is a square-free natural number, then $a^{\ast}$ denotes $a$ or $4a$ according as $a\equiv1\pmod{4}$ or not. For $N\in\mathbf{N}$, $(\mathbf{Z}/N)^{\ast}$ denotes the group of Dirichlet characters modulo $N$ with $\mathbf{1}_{N}$ as the identity element. Let $\mathbf{M}_{k+1/2}(N,\chi)$ be the space of holomorphic modular forms for $\Gamma_{0}(N)$ of weight $k+1/2$ with character $\chi\in(\mathbf{Z}/N)^{\ast}\ (4|N)$. Then there is a Shimura lifting map $\mathscr{S}_{a^{\ast},\chi}$ of $\mathbf{M}_{k+1/2}(N,\chi)$ to $\mathbf{M}_{2k}(N/2,\chi^{2})$ for each square-free $a\ge1$ and for $k\ge1$, such as 
\begin{align}
\mathscr{S}_{a^{\ast},\chi}(f)=C_{f,a}+\sum_{1\le n}\sum_{0<d|n}(\chi_{a^{\ast}}\chi)(d)d^{k-1}c_{an^{2}/d^{2}}\mathbf{e}(nz)  \label{eqn:s-l-e}
\end{align}
with $f(z)=\sum_{0\le n}c_{n}\mathbf{e}(nz)$, where $C_{f,a}$ is a constant for which the left hand side of (\ref{eqn:s-l-e}) is a modular form. Let $\theta(z)$ is a theta series, namely $\theta(z)=\sum_{n=-\infty}^{\infty}\mathbf{e}(n^{2}z)$. If $f$ is a product of $\theta(z)$ and an Eisenstein series $G(z)$ of weight $k$, then $C_{f,a}$ is obtained as the constant term of a Hilbert-Eisenstein series over a field $\mathbf{Q}(\sqrt{a})$ where the Hilbert-Eisenstein series is determined only by $G(z)$.

Let $r_{3,\mathbf{Q}}(n)$ denote the numbers of representations of $n$ as sums of three rational integral squares. The generating function $\sum_{0\le n}r_{3,\mathbf{Q}}(n)\mathbf{e}(nz)$ of $r_{3,\mathbf{Q}}(n)$ is $\theta(z)^{3}\in\mathbf{M}_{3/2}(4,\chi_{-4})$. Let $n=am^{2}$ with square-free $a>0$. Since $\theta(z)^{2}$ is expressed as an Eisenstein series, $\theta(z)^{3}$ is a product of $\theta(z)$ an the Eisenstein series. The constant term of $\mathscr{S}_{a^{\ast},\chi}(\theta(z)^{3})$ is obtained from the constant term of an associated Hilbert-Eisenstein series which involves the class number of $\mathbf{Q}(\sqrt{-a})$. Since $\mathbf{M}_{2}(2,\mathbf{1}_{2})$ is one dimensional, it is spanned by an Eisenstein series of weight $2$, and by comparing the Fourier coefficients of $\mathscr{S}_{a^{\ast},\chi}(\theta(z)^{3})$ and those of the Eisenstein series, we obtain the formula for $r_{3,\mathbf{Q}}(n)$ (\cite{Tsuyumine1}). In particular Gauss's three squares theorem is derived from the formula, that is $r_{3,\mathbf{Q}}(a)=2^{2}3h_{\mathbf{Q}(\sqrt{-a})}$ for square-free $a\equiv1,2\pmod{4}$, and $r_{3,\mathbf{Q}}(a)=2^{3}3h_{\mathbf{Q}(\sqrt{-a})}$ for square-free $a\equiv3\pmod{8}$, $h_{\mathbf{Q}(\sqrt{-a})}$ being the class number of $\mathbf{Q}(\sqrt{-a})$. The one of purposes of the present paper is to obtain such a kind of  formulas on $\mathbf{Q}(\sqrt{3})$ or on $\mathbf{Q}(\sqrt{17})$ instead of $\mathbf{Q}$.

Our paper \cite{Tsuyumine4} gives only a partial result on Shimura lifts of noncuspidal Hilbert modular forms over a totally real field $K$, however it is shown that products of theta series and Hilbert-Eisenstein series over $K$ have Shimura lifts whose constant terms are obtained from Hilbert-Eisenstein series over totally real quadratic extensions of $K$. So there is a way that a similar argument as in the elliptic modular forms  can be made for Hilbert modular forms. In the present paper we carry out this for $\mathbf{Q}(\sqrt{3})$ and $\mathbf{Q}(\sqrt{17})$. However while the space $\mathbf{M}_{2}(2,\mathbf{1}_{2})$ contains no nontrivial cusp forms in the elliptic modular case as well as the cases of $\mathbf{Q}(\sqrt{5})$ and $\mathbf{Q}(\sqrt{2})$, there is a non-trivial cusp form of weight $2$ in each case of $\mathbf{Q}(\sqrt{3})$ or $\mathbf{Q}(\sqrt{17})$, and a closer study of Fourier coefficients of Hilbert modular forms is necessary.

\section{Notations and some preceding results}\label{sect:n-r}
Let $K$ be a totally real algebraic number field of degree $g$ over $\mathbf{Q}$. We denote by $\mathcal{O}_{K}$, $\mathfrak{d}_{K}$ and $d_{K}$, ring of algebraic integers, the different of $K$ and the discriminant respectively.  For $\alpha\in K$, $\alpha^{(1)},\cdots,\alpha^{(g)}$ denotes the conjugates of $\alpha$ in a fixed order. If $\alpha^{(i)}$ is positive for every $i$, then we call $\alpha$ {\itshape totally positive}, and denote it by $\alpha\succ 0$. For an integral ideal $\mathfrak{N}$, let $\mathcal{E}_{\mathfrak{N}}$ denotes the group of totally positive units congruent to $1$ modulo $\mathfrak{N}$, and so $\mathcal{E}_{\mathcal{O}_{K}}$ denotes the group of all totally positive units, while $\mathcal{O}_{K}^{\times}$ denotes the group of all units. We denote by $\mathrm{N}$ and $\mathrm{tr}$, the norm map and the trace map of $K$ over $\mathbf{Q}$ respectively, namely $\mathrm{N}(\alpha)=\prod_{i=1}^{g}\alpha^{(i)}$ and $\mathrm{tr}(\alpha)=\sum_{i=1}^{g}\alpha^{(i)}$. An integral ideal is called {\itshape odd} or {\itshape even} according as it is coprime to the ideal $(2)$ or not. An integer $\alpha$ in $\mathcal{O}_{K}$ is called {\itshape odd} or {\itshape even} according as the ideal $(\alpha)$ is odd or even. If $\mathfrak{P}$ is a prime ideal, then $v_{\mathfrak{P}}$ denotes the $\mathfrak{P}$-adic valuation. Let $\mu_{K}$ denote the M\"obius function on $K$, and let $\varphi_{K}$ denote the Euler function on $K$, that is, $\varphi_{K}(\mathfrak{N})=\mathrm{N}(\mathfrak{N})\prod_{\mathfrak{P}|\mathfrak{N}}(1-\mathrm{N}(\mathfrak{P})^{-1})$ for an integral ideal $\mathfrak{N}$. We denote by $C_{\mathfrak{N}}$, the fractional ideal class group modulo $\mathfrak{N}$ in the narrow sense where ideals have denominators coprime to $\mathfrak{N}$, and denote by $C_{\mathfrak{N}}^{\ast}$, the group of characters of $C_{\mathfrak{N}}$. An element of $C_{\mathfrak{N}}^{\ast}$ is called a (classical) Hecke character. The identity element of $C_{\mathfrak{N}}^{\ast}$ is denoted by $\mathbf{1}_{\mathfrak{N}}$, for which $\mathbf{1}_{\mathfrak{N}}(\mathfrak{A})$ is $1$ or $0$ according as a numerator of an ideal $\mathfrak{A}$ is coprime to $\mathfrak{N}$ or not. If $\mathfrak{N}$ is principal with a generator $\nu$, then we denote $\mathbf{1}_{\mathfrak{N}}$ also by $\mathbf{1}_{\nu}$. For $\mathbf{a}=(a_{1},\cdots,a_{g})\in\{0,1\}^{g}$, we define $\mathrm{sgn}^{\mathbf{a}}(\alpha)$ by setting $\mathrm{sgn}^{\mathbf{a}}(\alpha):=\mathrm{sgn}(\alpha^{(1)})^{a_{1}}\cdots\,\mathrm{sgn}(\alpha^{(g)})^{a_{g}}$ for $\alpha\in K,\ne0$ where $\mathrm{sgn}^{\mathbf{a}}(0):=1$. For $\psi\in C_{\mathfrak{N}}^{\ast}$, let $\mathbf{e}_{\psi}=(e_{1},\cdots,e_{n})\in\{0,1\}^{g}$ be so that $\psi(\mu)=\mathrm{sgn}^{\mathbf{e}_{\psi}}(\mu)$ for $\mu\equiv1\bmod{\mathfrak{N}},\mu\ne 0$. The character $\psi$ is called {\itshape even} if $\mathbf{e}_{\psi}=(0,\cdots,0)$, and it is called {\itshape odd} if $\mathbf{e}_{\psi}=(1,\cdots,1)$. 

For a Hecke character $\psi$, we  denote by $\mathfrak{f}_{\psi}$, the conductor of $\psi$, and put $\mathfrak{e}_{\psi}:=\mathfrak{f}_{\psi}\prod_{\psi(\mathfrak{P})=0,\mathfrak{P}\nmid\mathfrak{f}_{\psi}}\mathfrak{P}$. The primitive character associated with $\psi$ is denoted by $\widetilde{\psi}$. For an integral ideal $\mathfrak{N}$, we define $\psi_{\mathfrak{N}}$ to be $\widetilde{\psi}\mathbf{1}_{\mathfrak{N}}$. For an integer $\alpha\in K$ not square, we denote by $\mathfrak{f}_{\alpha}$ or $d_{K(\sqrt{\alpha})/K}$, the relative discriminant of $K(\sqrt{\alpha})$ over $K$, and denote by $\psi_{\alpha}\in C_{\mathfrak{f}_{\alpha}}^{\ast}$, the character associated with the extension, where $\psi(\mathfrak{P})$ is $1,-1$ or $0$ according as the prime $\mathfrak{P}$ of $K$ is split at $K(\sqrt{\alpha})$, inert or ramified.

We need some more general character than a Hecke character. For the purpose it is convenient to use the idelic language. For a prime $\mathfrak{P}$ of $K$, let $K_{\mathfrak{P}},\mathcal{O}_{\mathfrak{P}},\mathcal{O}_{\mathfrak{P}}^{\times}$ be the $\mathfrak{P}$-adic completion of $K$, the maximal local ring and the group of its units. For an integral ideal  $\mathfrak{N}$ of $K$, let $J(\mathfrak{N})$ denote the group consisting of ideles whose $\mathfrak{P}$-th components are in  $\mathcal{O}_{\mathfrak{P}}$ for $\mathfrak{P}|\mathfrak{N}$. Put $U_{K}:=\prod_{\mathfrak{P}}\mathcal{O}_{\mathfrak{P}}^{\times}\times(\mathbf{R}^{+})^{g}$ with $\mathbf{R}^{+}=\{ x\in\mathbf{R}\mid x>0\}$. Let  $K^{\times}(\mathfrak{N}):=K\cap J(\mathfrak{N})$, namely $K^{\times}(\mathfrak{N})$ is the group of elements in $K^{\times}$ whose denominators and numerators are both coprime to $\mathfrak{N}$.  Let $K_{\mathfrak{N}}^{\times}$ denote the subgroup of $K^{\times}(\mathfrak{N})$ consisting of totally positive elements multiplicatively congruent to $1$ modulo $\mathfrak{N}$. A homomorphism of the finite idele to $C_{\mathfrak{N}}$ by sending $j=(j_{\mathfrak{P}})$ to an ideal  class containing fractional ideal $\prod_{\mathfrak{P}}\mathfrak{P}^{v_{\mathfrak{P}}(j_{p})}$ gives the natural isomorphism between $J(\mathfrak{N})/(K_{\mathfrak{N}}^{\times}U_{K})$ and $C_{\mathfrak{N}}$, and we identify them. Let  $U_{\mathfrak{N}}$ be the subgroup of $U_{K}$ consisting of ideles whose $\mathfrak{P}$-th components are in $1+\mathfrak{P}^{v_{\mathfrak{P}}(\mathfrak{N})}\mathcal{O}_{\mathfrak{P}}$. The factor group $U_{K}/U_{\mathfrak{N}}$ is isomorphic to $(\mathcal{O}_{K}/\mathfrak{N})^{\times}$. We fix local parameters $\varpi_{\mathfrak{P}}$ of $\mathcal{O}_{\mathfrak{P}}$ for $\mathfrak{P}$, which give the isomorphism of $U_{K}/U_{\mathfrak{N}}$ onto $(\mathcal{O}_{K}/\mathfrak{N})^{\times}$. Let $(O_{K}/\mathfrak{N})^{\ast}$ denote the group of characters of $(O_{K}/\mathfrak{N})^{\times}$. For $\phi\in C_{\mathfrak{N}}^{\ast}$ and for $\omega\in(O_{K}/\mathfrak{N})^{\ast}$ we define
\begin{align*}
\psi(\xi\cdot\mathfrak{A})=\omega(\xi\prod_{\mathfrak{P}|\mathfrak{N}}\varpi_{\mathfrak{P}}^{v_{\mathfrak{P}}(\mathfrak{A})})\phi((\xi)\mathfrak{A})
\end{align*}
for $\xi\in K^{\times}$ and for an ideal $\mathfrak{A}$ where a denominator of $\xi\mathfrak{A}$ is coprime to $\mathfrak{N}$. We define $\psi(\xi\cdot\mathfrak{A})=0$ if a numerator of $\xi\mathfrak{A}$ is not coprime to $\mathfrak{N}$. Obviously there holds an equality $\psi(\xi\cdot\mathfrak{A})\psi(\xi'\cdot\mathfrak{A}')=\psi(\xi\xi'\cdot\mathfrak{AA}')$. This $\psi$ is a character of  $J(\mathfrak{N})/(K_{\mathfrak{N}}^{\times}U_{\mathfrak{N}})$. Since  $J(\mathfrak{N})/(K_{\mathfrak{N}}^{\times}U_{\mathfrak{N}})\simeq(U_{K}/U_{\mathfrak{N}})^{\times}\times J(\mathfrak{N})/(K_{\mathfrak{N}}^{\times}U_{K})\simeq(O_{K}/U_{\mathfrak{N}})^{\times}\times C_{\mathfrak{N}}$ as groups, the group $(J(\mathfrak{N})/(K_{\mathfrak{N}}^{\times}U_{\mathfrak{N}}))^{\ast}$ of characters is isomorphic to $(O_{K}/U_{\mathfrak{N}})^{\ast}\times C_{\mathfrak{N}}^{\ast}$. We put $\psi(\xi):=\psi(\xi\cdot\mathcal{O}_{K})=\omega(\xi)\phi((\xi))$ for $\xi\in K^{\times}(\mathfrak{N})$, and $\psi(\mathfrak{A}):=\psi(1\cdot\mathfrak{A})$ for an ideal $\mathfrak{A}$. Also for $\psi\in(O_{K}/U_{\mathfrak{N}})^{\ast}\times C_{\mathfrak{N}}^{\ast}$, $\mathbf{e}_{\psi}$ is defined as in the case of Hecke characters, however the equality $\mathbf{e}_{\psi}=\mathbf{e}_{\phi}$ holds since $\omega(\xi)=1$ for $\xi$ with $\xi\equiv1\pmod{\mathfrak{N}}$. We call $\psi$ {\itshape primitive} if there is no integral ideal $\mathfrak{M}$ so that $\psi=\psi'\mathbf{1}_{\mathfrak{N}}$ with a character $\psi'$ of $J(\mathfrak{M})/(K_{\mathfrak{M}}^{\times}U_{\mathfrak{M}})$. In such a case we denote $\mathfrak{N}$ by $\mathfrak{f}_{\psi}$, and call it the {\itshape conductor} of $\psi$, and put $\mathfrak{e}_{\psi}:=\mathfrak{f}_{\psi}\prod_{\psi(\mathfrak{P})=0,\mathfrak{P}\nmid\mathfrak{f}_{\psi}}\mathfrak{P}$. For $\psi$ primitive, we define the Gauss sum by
\begin{align}
\tau_{K}(\psi):=\psi(\mu\cdot\mathfrak{f}_{\psi}\mathfrak{d}_{K})\sum_{\xi\in\mathcal{O}_{K}(\mathrm{mod}\,\mathfrak{f}_{\psi})\atop \xi\succ0}\psi(\xi)\mathbf{e}(\mathrm{tr}(\mu\xi))\label{eqn:GS}
\end{align}
with $\mu\in K,\ \succ0,\ (\mu\mathfrak{f}_{\psi}\mathfrak{d}_{K},\mathfrak{f}_{\psi})=\mathcal{O}_{K}$, where $\tau_{K}(\psi)$ is determined up the choices of $\mu$. The standard argument shows that $|\tau_{K}(\psi)|=\mathrm{N}(\mathfrak{f}_{\psi})^{1/2}$, and that $\tau_{K}(\psi)\tau_{K}(\overline{\psi})=\mathrm{sgn}^{\mathbf{e}_{\psi}}(-1)\psi(-1)\mathrm{N}(\mathfrak{f}_{\psi})$. When $\omega$ is trivial, $\tau_{K}(\psi)$ of (\ref{eqn:GS}) coincides with the Gauss sum of a Hecke character.

Let $\mathfrak{N},\mathfrak{N}'$ be integral ideals. Let $\psi=\omega\phi\in(\mathcal{O}_{K}/\mathfrak{N})^{\ast}\times C_{\mathfrak{N}}^{\ast}, \psi'=\omega'\phi'\in(\mathcal{O}_{K}/\mathfrak{N}')^{\ast}\times C_{\mathfrak{N}'}^{\ast}$ so that $\psi\overline{\psi}'$ is an even or odd Hecke character in $C_{\mathfrak{NN}'}^{\ast}$, namely
\begin{align}
\mathbf{e}_{\psi\overline{\psi}'}=(0,\cdots,0)\mbox{ or }(1,\cdots,1)\mbox{, and }\omega(\xi)=\omega'(\xi)\mbox{ for }\xi\in K^{\times}((\mathfrak{N},\mathfrak{N}')).\label{cond:psi}
\end{align}
For a fractional ideal $\mathfrak{M}$ and for a totally positive $\nu \in K$, we define
\begin{align}
\sigma_{k-1,\psi}^{\psi'}(\nu;\mathfrak{M}):=\sum_{\nu\mathfrak{M}\subset\mathfrak{A}\subset\mathcal{O}_{K}}\psi(\mathfrak{A})\psi'(\nu\cdot\mathfrak{M}\mathfrak{A}^{-1})\mathrm{N}(\mathfrak{A})^{k-1},\label{def:sigma}
\end{align}
where it is $0$ if $\nu\mathfrak{M}$ is not integral. We also define $\sigma_{k-1,\psi}^{\psi'}(\nu;\mathfrak{M})$ to be $0$ if $\nu$ is not totally positive.  By a condition (\ref{cond:psi}), the summation (\ref{def:sigma}) is equal to $\omega'(\nu\prod_{\mathfrak{P}|\mathfrak{N}'}\varpi_{\mathfrak{P}}^{-v_{\mathfrak{P}}(\nu)})$ $\sum_{\nu\mathfrak{M}\subset\mathfrak{A}\subset\mathcal{O}_{K}}\phi(\mathfrak{A})\phi'(\nu\mathfrak{M}\mathfrak{A}^{-1})\mathrm{N}(\mathfrak{A})^{k-1}$. If $\mathfrak{M}=\mathcal{O}_{K}$, then we denote $\sigma_{k-1,\psi}^{\psi'}(\nu;\mathfrak{M})$ simply by  $\sigma_{k-1,\psi}^{\psi'}(\nu)$, and further if $K$ is of class number $1$ and if both of $\psi$ and $\psi'$ are Hecke characters, namely, $\omega$ and  $\omega'$ are both trivial, then $\sigma_{k-1,\psi}^{\psi'}(\nu)$ is expressed as
\begin{align}
\sigma_{k-1,\psi}^{\psi'}(\nu)=\sum_{\delta|\nu,\delta\in\mathcal{O}_{K}/\mathcal{O}_{K}^{\times}}\psi(\delta)\psi'(\nu/\delta)\mathrm{N}((\delta))^{k-1}\hspace{1.5em}(\nu\in\mathcal{O}_{K},\succ0). \label{def:sigma1}
\end{align}
We omit $\psi$ from the notation $\sigma_{k-1,\psi}^{\psi'}$ if $\mathfrak{N}=\mathcal{O}_{K}$ and $\psi=\mathbf{1}$, and do similar as for $\psi'$.

Let $\mathfrak{H}^{g}$ be the product of $g$ copies of the upper half plane $\mathfrak{H}$. For $\gamma,\delta$ in $K$ and for $\mathfrak{z}=(z_{1},\cdots,z_{g})\in\mathfrak{H}^{g}$, $\mathrm{N}(\gamma\mathfrak{z}+\delta)$ and $\mathrm{tr}(\gamma\mathfrak{z})$ stand for $\prod_{i=1}^{g}(\gamma^{(i)}z_{i}+\delta^{(i)})$ and $\sum_{i=1}^{g}\gamma^{(i)}z_{i}$ respectively. For a matrix $A=\left(\begin{smallmatrix}\alpha&\beta\\\gamma&\delta\end{smallmatrix}\right)\in\mathrm{SL}_{2}(K)$, we put $A\mathfrak{z}=\left(\frac{\alpha^{(1)}z_{1}+\beta^{(1)}}{\gamma^{(1)}z_{1}+\delta^{(1)}},\cdots,\frac{\alpha^{(g)}z_{g}+\beta^{(g)}}{\gamma^{(g)}z_{g}+\delta^{(g)}}\right)$. We define
\begin{align*}
\Gamma_{0}(\mathfrak{D}^{-1},\mathfrak{ND}):=\{\mbox{\small$\left({\alpha\ \ \beta\atop\gamma\ \ \delta}\right)$}\in\mathrm{SL}_{2}(K)\mid\alpha,\delta\in\mathcal{O}_{K},\beta\in\mathfrak{D}^{-1},\gamma\in\mathfrak{ND}\}
\end{align*}
for a fractional ideal $\mathfrak{D}$ and for an integral ideal $\mathfrak{N}$.

Let $\psi\in(\mathcal{O}_{K}/\mathfrak{N})^{\ast}\times C_{\mathfrak{N}}^{\ast}, \psi'\in(\mathcal{O}_{K}/\mathfrak{N}')^{\ast}\times C_{\mathfrak{N}'}^{\ast}$ be as in (\ref{cond:psi}), and let $k$ be a natural number with the same parity as $\mathbf{e}_{\psi\overline{\psi}'}$. We assume that $\psi$ or $\psi'$ is nontrivial when $g=1$ and $k=2$. Then we define an Eisenstein series by
\begin{align}
G_{k,\psi}^{\psi'}(\mathfrak{z};\mathfrak{Ne}_{\psi}^{-1}\mathfrak{N}'\mathfrak{e}_{\psi'}^{-1},\mathfrak{D}):=C+2^{g}\sum_{0\prec\nu\in\mathfrak{D}}\sigma_{k-1,\psi}^{\psi'}(\nu;\mathfrak{N}^{-1}\mathfrak{e}_{\psi}\mathfrak{N}'^{-1}\mathfrak{e}_{\psi'})\mathbf{e}(\mathrm{tr}(\nu\mathfrak{z}))\label{eqn:e-s}
\end{align}
where $C$ is a constant term given by (i) $\overline{\psi}'(\mathfrak{Ne}_{\psi}^{-1}\mathfrak{f}_{\psi}\mathfrak{D})L_{K}(1-k,\psi\overline{\psi}')$ if $k>1$ and $\mathfrak{N}'=\mathcal{O}_{K}$ or if $\mathfrak{N}\subsetneq\mathcal{O}_{K}$ and  $\mathfrak{N}'=\mathcal{O}_{K}$, (ii) $\overline{\psi}(\mathfrak{N}'\mathfrak{e}_{\psi'}^{-1}\mathfrak{f}_{\psi'}\mathfrak{D})L_{K}(0,\overline{\psi}\psi')$ if $k=1,\mathfrak{N}=\mathcal{O}_{K}$ and $\mathfrak{N}'\subsetneq\mathcal{O}_{K}$, (iii) $\psi'(\mathfrak{D})L_{K}(0,\psi\overline{\psi}')+\psi(\mathfrak{D})L_{K}(0,\overline{\psi}\psi')$ if $k=1$ and $\mathfrak{N}=\mathfrak{N}'=\mathcal{O}_{K}$, and (iv) $0$ otherwise. The Eisenstein series $G_{k,\psi}^{\psi'}(\mathfrak{z};\mathfrak{Ne}_{\psi}^{-1}\mathfrak{N}'\mathfrak{e}_{\psi'}^{-1},\mathfrak{D})$ of (\ref{eqn:e-s}) is a Hilbert modular form for $\Gamma_{0}(\mathfrak{D}^{-1}\mathfrak{d}_{K}^{-1},\mathfrak{NN}'\mathfrak{D}\mathfrak{d}_{K})$ of weight $k$ with character $\psi\psi'$, namely $G_{k,\psi}^{\psi'}(\mathfrak{z};\mathfrak{Ne}_{\psi}^{-1}\mathfrak{N}'\mathfrak{e}_{\psi'}^{-1},\mathfrak{D})$ satisfies
\begin{align*}
G_{k,\psi}^{\psi'}(A\mathfrak{z};\mathfrak{Ne}_{\psi}^{-1}\mathfrak{N}'\mathfrak{e}_{\psi'}^{-1},\mathfrak{D})=(\psi\psi')(\gamma)\mathrm{N}(\gamma\mathfrak{z}+\delta)^{k}G_{k,\psi}^{\psi'}(\mathfrak{z};\mathfrak{Ne}_{\psi}^{-1}\mathfrak{N}'\mathfrak{e}_{\psi'}^{-1},\mathfrak{D})
\end{align*}
for $A=\left(\begin{smallmatrix}\alpha&\beta\\\gamma&\delta\end{smallmatrix}\right)\in\Gamma_{0}(\mathfrak{D}^{-1}\mathfrak{d}_{K}^{-1},\mathfrak{NN}'\mathfrak{D}\mathfrak{d}_{K})$ (\cite{Tsuyumine5}). We denote by \linebreak[4]$\mathbf{M}_{k}(\Gamma_{0}(\mathfrak{D}^{-1}\mathfrak{d}_{K}^{-1},\mathfrak{NN}'\mathfrak{D}\mathfrak{d}_{K}),\psi\psi')$, the space of Hilbert modular forms of weight $k$ with character $\psi\psi'$. For $f$ in the space, the value $\kappa(\alpha/\gamma,f)=\kappa(\alpha,\gamma,f)$ of $f(\mathfrak{z})$ at a cusp $\alpha/\gamma\ (\alpha\in\mathcal{O}_{K},\gamma\in\mathfrak{D}\mathfrak{d}_{K})$ is defined by $\kappa(\alpha/\gamma,f)=\kappa(\alpha,\gamma,f):=\lim_{\mathfrak{z}\to\sqrt{-1}\infty}\mathrm{N}(\gamma\mathfrak{z}+\delta)^{-k}f(B\mathfrak{z})$ where $B=\left(\begin{smallmatrix}\alpha&*\\\gamma&\delta\end{smallmatrix}\right)\in\mathrm{SL}_{2}(K)$. The subspace of $\mathbf{M}_{k}(\Gamma_{0}(\mathfrak{D}^{-1}\mathfrak{d}_{K}^{-1},\mathfrak{NN}'\mathfrak{D}\mathfrak{d}_{K}),\psi\psi')$ consisting of cusps forms is denoted by\linebreak$\mathbf{S}_{k}(\Gamma_{0}(\mathfrak{D}^{-1}\mathfrak{d}_{K}^{-1},\mathfrak{NN}'\mathfrak{D}\mathfrak{d}_{K}),\psi\psi')$, where $\psi\psi'$ is omitted if $\psi\psi'=\mathbf{1}_{\mathfrak{NN}'}$.

The values of Eisenstein series (\ref{eqn:e-s}) at cusps are computed in \cite{Tsuyumine5}. We can take $\alpha,\gamma$ so that $\mathfrak{B}:=(\alpha,\gamma\mathfrak{D}^{-1}\mathfrak{d}_{K}^{-1})$ is coprime to $\mathfrak{NN}'$. The value $\kappa(\alpha,\gamma,G_{k,\psi}^{\psi'}(\mathfrak{z};\mathfrak{Ne}_{\psi}^{-1}\mathfrak{N}'\mathfrak{e}_{\psi'}^{-1},\mathfrak{D}))$ at the cusp $\alpha/\gamma$ is $0$ if there are not integral ideals $\mathfrak{M}_{\gamma},\mathfrak{M}_{\gamma}'$ with $\mathfrak{M}_{\gamma}|\mathfrak{N},\,(\mathfrak{M}_{\gamma},\mathfrak{f}_{\psi})=\mathcal{O}_{K},\,\mathfrak{M}_{\gamma}'|\mathfrak{N}'\mathfrak{e}_{\psi'}^{-1}$ and with $(\gamma\mathfrak{D}^{-1}\mathfrak{d}_{K}^{-1},\mathfrak{N}\mathfrak{M}_{\gamma}^{-1}\mathfrak{N}')=\mathfrak{N}\mathfrak{M}_{\gamma}^{-1}\mathfrak{N}'\mathfrak{e}_{\psi'}^{-1}\mathfrak{M}_{\gamma}'^{-1}$. Suppose otherwise, and let $\mathfrak{M}_{\gamma}$ be the largest such ideal. Then the value is given by
\begin{align}
\label{mvc1}&\mathrm{sgn}^{\mathbf{e}_{\psi}}(\alpha)\mathrm{sgn}^{\mathbf{e}_{\psi'}}(-\gamma)\mu_{K}((\mathfrak{e}_{\psi}\mathfrak{f}_{\psi}^{-1},\mathfrak{M}_{\gamma}\mathfrak{N}'))\overline{\widetilde{\psi}}(\alpha\cdot\mathfrak{B}^{-1}\mathfrak{M}_{\gamma}\mathfrak{M}_{\gamma}'(\mathfrak{e}_{\psi}\mathfrak{f}_{\psi}^{-1},\mathfrak{M}_{\gamma}\mathfrak{N}')^{-1})\\
&\times\psi'(-\gamma\cdot\mathfrak{D}^{-1}\mathfrak{d}_{K}^{-1}\mathfrak{B}^{-1}\mathfrak{N}^{-1}\mathfrak{M}_{\gamma}\mathfrak{e}_{\psi}\mathfrak{f}_{\psi}^{-1}(\mathfrak{e}_{\psi}\mathfrak{f}_{\psi}^{-1},\mathfrak{M}_{\gamma}\mathfrak{N}')^{-1}\mathfrak{N}'^{-1}\mathfrak{e}_{\psi'}\mathfrak{M}_{\gamma}')\mathrm{N}(\mathfrak{B})^{k}\nonumber\\
&\times\mathrm{N}(\mathfrak{M}_{\gamma}^{-1}(\mathfrak{e}_{\psi}\mathfrak{f}_{\psi}^{-1},\mathfrak{M}_{\gamma}\mathfrak{N}')\mathfrak{f}_{\psi}\mathfrak{f}_{\overline{\psi}\psi'}^{-1})^{k-1}\mathrm{N}(\mathfrak{M}_{\gamma}')^{-k}\mathrm{N}(\mathfrak{f}_{\psi}\mathfrak{f}_{\overline{\psi}\psi'}^{-1})\tau_{K}(\overline{\widetilde{\psi}})^{-1}\tau_{K}(\widetilde{\overline{\psi}\psi'})\mathrm{N}(\mathfrak{M}_{\gamma}^{-1}) \nonumber\\
&\times L_{K}(1-k,\widetilde{\psi\overline{\psi}'})\prod_{\mathfrak{P}|\mathfrak{M}_{\gamma}}(1-\mathrm{N}(\mathfrak{P}))\ \prod_{\mathfrak{P}|\mathfrak{e}_{\psi'},\mathfrak{P}\nmid\mathfrak{f}_{\overline{\psi}\psi'}}(1-\widetilde{\overline{\psi}\psi'}(\mathfrak{P})\mathrm{N}(\mathfrak{P})^{-k})\nonumber\\
&\times\prod_{\mathfrak{P}|\mathfrak{e}_{\psi}\mathfrak{f}_{\psi}^{-1},\mathfrak{P}{\nmid}\mathfrak{M}_{\gamma}\mathfrak{N}'}(1-\widetilde{\psi\overline{\psi}'}(\mathfrak{P})\mathrm{N}(\mathfrak{P})^{k-1})\nonumber
\end{align}
where if $\gamma=0$, then the value is non-zero only when $\mathfrak{N}'=\mathcal{O}_{K}$ and it is given by replacing $\gamma$ in (\ref{mvc1}) by $\mathrm{N}(\mathfrak{N})$, and where if $\alpha=0$, the value is non-zero only when $\mathfrak{f}_{\psi}=\mathcal{O}_{K}$ and it is given by replacing $\alpha$ in (\ref{mvc1}) by $1$.

When $k=1$, the values of the Eisenstein series at cusps may may have an additional term. Let $\mathfrak{L}_{\gamma}:=\gamma\mathfrak{D}^{-1}\mathfrak{d}_{K}^{-1}\mathfrak{N}^{-1}\mathfrak{e}_{\psi}\mathfrak{N}'^{-1}\mathfrak{e}_{\psi'}\mathfrak{f}_{\psi'}^{-1}$ and $\mathfrak{L}_{\gamma}':=(\gamma\mathfrak{D}^{-1}\mathfrak{d}_{K}^{-1}\mathfrak{N}'^{-1},\mathfrak{N}^{-1})\cap\mathfrak{e}_{\psi'}^{-1}\mathfrak{f}_{\psi'}$. If there is an integral divisor $\mathfrak{R}$ of $\mathfrak{e}_{\psi'}\mathfrak{f}_{\psi'}^{-1}$ so that the numerator of $\mathfrak{L}_{\gamma}\mathfrak{R}^{-1}$ is coprime to $\mathfrak{N}$ and the denominator is coprime to $\mathfrak{f}_{\psi'}\mathfrak{R}$, then there is the additional term. Let $\widetilde{\mathfrak{R}}_{\gamma}$ be the divisor of $(\mathfrak{N},\mathfrak{e}_{\psi'}\mathfrak{f}_{\psi'}^{-1})$ satisfying $v_{\mathfrak{P}}(\mathfrak{L}_{\gamma}\widetilde{\mathfrak{R}}_{\gamma}^{-1})=0$ for any prime divisor $\mathfrak{P}$ of $(\mathfrak{N},\mathfrak{e}_{\psi'}\mathfrak{f}_{\psi'}^{-1})$. Then $\kappa(\alpha,\gamma,G_{k,\psi}^{\psi'}(\mathfrak{z};\mathfrak{Ne}_{\psi}^{-1}\mathfrak{N}'\mathfrak{e}_{\psi'}^{-1},\mathfrak{D}))$ has the additional term
\begin{align}
&\label{mvc2}\mathrm{sgn}^{\mathbf{e}_{\psi}}(-\gamma)\mathrm{sgn}^{\mathbf{e}_{\psi'}}(\alpha)\mu_{K}(\widetilde{\mathfrak{R}}_{\gamma})\psi(-\gamma\cdot\gamma^{-1}((\mathfrak{L}_{\gamma}\widetilde{\mathfrak{R}}_{\gamma}^{-1})\cap\mathcal{O}_{K}))\overline{\widetilde{\psi}'}(\alpha\cdot\mathfrak{B}^{-1})\widetilde{\psi}'(\widetilde{\mathfrak{R}}_{\gamma})\\
&\times\overline{\psi}_{\widetilde{\mathfrak{R}}_{\gamma}}'((\mathfrak{L}_{\gamma}\widetilde{\mathfrak{R}}_{\gamma}^{-1},\mathcal{O}_{K})^{-1})\overline{\psi}(\mathfrak{B})\mathrm{N}(\mathfrak{B})\varphi_{K}(\widetilde{\mathfrak{R}}_{\gamma}^{-1}\mathfrak{L}_{\gamma}'^{-1})\mathrm{N}((\mathfrak{L}_{\gamma},\widetilde{\mathfrak{R}}_{\gamma})\mathfrak{L}_{\gamma}')\mathrm{N}(\mathfrak{f}_{\psi'}\mathfrak{f}_{\psi\overline{\psi}'}^{-1})\tau_{K}(\overline{\widetilde{\psi}'})^{-1}\nonumber\\
&\times\tau_{K}(\widetilde{\psi\overline{\psi'}}) L_{K}(0,\widetilde{\overline{\psi}\psi'})\prod_{\mathfrak{P}|\mathfrak{e}_{\psi},\mathfrak{P}{\nmid}\mathfrak{f}_{\psi\overline{\psi}'}}(1-\widetilde{\psi\overline{\psi'}}(\mathfrak{P})\mathrm{N}(\mathfrak{P})^{-1})\prod_{\mathfrak{P}|\mathfrak{e}_{\psi'}\mathfrak{f}_{\psi'}^{-1}\mathfrak{L}_{\gamma}'}(1-\widetilde{\overline{\psi}\psi'}(\mathfrak{P}))\nonumber
\end{align}
where if $\gamma=0$, then the value is non-zero only when $\mathfrak{N}=\mathcal{O}_{K}$ and it is given by replacing $\gamma$ in (\ref{mvc2}) by $\mathrm{N}(\mathfrak{N}')$, and where if $\alpha=0$, the value is non-zero only when $\mathfrak{f}_{\psi'}=\mathcal{O}_{K}$ and it is given by replacing $\alpha$ in (\ref{mvc2}) by $1$.

Let $\theta(\mathfrak{z})=\sum_{\mu\in\mathcal{O}_{K}}\mathbf{e}(\mathrm{tr}(\mu^{2}\mathfrak{z}))$ be a theta series. It is a modular form of weight $1/2$ for $\Gamma_{0}(\mathfrak{d}_{K}^{-1},4\mathfrak{d}_{K})$. Let $j$ be its factor of automorphy, namely, $\theta(A\mathfrak{z})=j(A,\mathfrak{z})\theta(\mathfrak{z})$ for $A\in\Gamma_{0}(\mathfrak{d}_{K}^{-1},4\mathfrak{d}_{K})$. Suppose that $4|\mathfrak{N}$. Let $\psi_{0}\in C_{\mathfrak{N}}^{\ast}$ and let $k$ be a natural number with same parity as $\psi_{0}$. We denote by $\mathbf{M}_{k+1/2}(\Gamma_{0}(\mathfrak{d}_{K}^{-1},\mathfrak{N}\mathfrak{d}_{K}),\psi_{0})$, the space of modular forms with $\psi_{0}(\gamma)j(A,\mathfrak{z})\mathrm{N}(\gamma\mathfrak{z}+\delta)^{k}\ (A=\left(\begin{smallmatrix}\alpha&\beta\\\gamma&\delta\end{smallmatrix}\right)\in\Gamma_{0}(\mathfrak{d}_{K}^{-1},\mathfrak{N}\mathfrak{d}_{K}))$ as a factor of automorphy. Assume that the class number of $K$ is $1$ in the wide sense. Let $\alpha$ be a totally positive square-free integer in $K$. Then the Shimura lifting map $\mathscr{S}_{\alpha,\psi_{0}}$ associated with $\alpha$ should be a linear map of $\mathbf{M}_{k+1/2}(\Gamma_{0}(\mathfrak{d}_{K}^{-1},\mathfrak{N}\mathfrak{d}_{K}),\psi_{0})$ to $\mathbf{M}_{2k}(\Gamma_{0}(\mathfrak{d}_{K}^{-1},2^{-1}\mathfrak{N}\mathfrak{d}_{K}),\psi_{0}^{2})$ satisfying
\begin{align}
\mathscr{S}_{\alpha,\psi_{0}}(f)=C_{f,\alpha}+\sum_{\nu\in\mathcal{O}_{K},\succ0}\sum_{\delta|\nu,\delta\in\mathcal{O}_{K}/\mathcal{O}_{K}^{\times}}(\psi_{\alpha}\psi_{0})(\delta)\mathrm{N}((\delta))^{k-1}c_{\alpha\nu^{2}\delta^{-2}}\mathbf{e}(\mathrm{tr}(\nu\mathfrak{z}))\label{eqn:s-l-h}
\end{align}
with $f(\mathfrak{z})=c_{0}+\sum_{\nu\in\mathcal{O}_{K},\succ0}c_{\nu}\mathbf{e}(\mathrm{tr}(\nu\mathfrak{z}))\in\mathbf{M}_{k+1/2}(\Gamma_{0}(\mathfrak{d}_{K}^{-1},\mathfrak{N}\mathfrak{d}_{K}),\psi_{0})$ where $C_{f,\alpha}$ is a constant for which the left hand side of (\ref{eqn:s-l-h}) is a Hilbert modular form. As for a space of Hilbert cusp forms, Shimura \cite{Shimura} established such lifting map.  However the existence of the map for non-cuspidal Hilbert modular forms is not yet proved unconditionally. We show in \cite{Tsuyumine4}, for example that if $k\ge2$ and $16|\mathfrak{N}$, then there is a map $\mathscr{S}_{\alpha,\psi_{0}}$ of $\mathbf{M}_{k+1/2}(\Gamma_{0}(\mathfrak{d}_{K}^{-1},\mathfrak{N}\mathfrak{d}_{K}),\psi_{0})$, and further that there is a map of the subspace of $\mathbf{M}_{k+1/2}(\Gamma_{0}(\mathfrak{d}_{K}^{-1},\mathfrak{N}\mathfrak{d}_{K}),\psi_{0})$ with $4|mathfrak{N}$, generated by the products of the theta series and Eisenstein series (\ref{eqn:e-s}) with $\psi_{0}=\psi\psi'$ for even or odd Hecke characters $\psi,\psi'$.  The latter assertion can be slightly generalized to $\psi,\psi'$ as in (\ref{cond:psi}) because the key proposition (Proposition 6.5 in \cite{Tsuyumine4}) holds for such characters, where the proof is made by word-to-word translation of the original proof. 

Let $f(\mathfrak{z})=c_{0}+\sum_{\nu\in\mathcal{O}_{K},\succ0}c_{\nu}\mathbf{e}(\mathrm{tr}(\nu\mathfrak{z}))\in\mathbf{M}_{k}(\Gamma_{0}(\mathfrak{d}_{K}^{-1},\mathfrak{N}\mathfrak{d}_{K}),\psi_{0})$, and let $\mu\in\mathcal{O}_{K},\succ0$ be so that $\mu\mathfrak{e}_{\psi_{0}}|\mathfrak{N}$ and $\mu^{2}|\mathfrak{N}$. Then we define
\begin{align*}
(U(\mu)f)(\mathfrak{z}):=c_{0}+\sum_{\nu\in\mathcal{O}_{K},\succ0}c_{\mu\nu}\mathbf{e}(\mathrm{tr}(\nu\mathfrak{z})),
\end{align*}
which is in $\mathbf{M}_{k}(\Gamma_{0}(\mathfrak{d}_{K}^{-1},\mu^{-1}\mathfrak{N}\mathfrak{d}_{K}),\psi_{0})$.

\section{Hilbert modular forms on $\mathbf{Q}(\sqrt{3})$}
Let $K=\mathbf{Q}(\sqrt{3})$. Then $\mathfrak{d}_{K}=(2\sqrt{3})$, and the class number of $K$ in the wide sense is $1$. Let $\varepsilon_{0}:=2+\sqrt{3}$ be a fundamental unit, which has a positive norm. Put $\pi_{2}:=1+\sqrt{3}$ and $\mathfrak{p}_{2}=(\pi_{2})$. Then the ideal $(2)$ is decomposed as $(2)=\mathfrak{p}_{2}^{2}$. Let $\chi_{-4}^{K}:=\chi_{-4}\circ\mathrm{N}\in C_{\mathfrak{p}_{2}}^{\ast}$. For $\mu\in\mathcal{O}_{K}$, $\chi_{-4}^{K}(\mu)=\mathrm{sgn}(\mathrm{N}(\mu))$ for $\pi_{2}\nmid\mu$, and $\chi_{-4}^{K}(\mu)=0$ for $\pi_{2}|\mu$. The Hecke character $\chi_{-4}^{K}$ is odd. The conductor $\mathfrak{f}_{\chi_{-4}^{K}}$ of $\chi_{-4}^{K}$ is $\mathcal{O}_{K}$, and $\widetilde{\chi}_{-4}^{K}(\mu)=\mathrm{sgn}(\mathrm{N}(\mu))$, in particular $\widetilde{\chi}_{-4}^{K}(\mathfrak{p}_{2})=\mathrm{sgn}(\mathrm{N}(\pi_{2}))=-1$. We have $\tau_{K}(\widetilde{\chi}_{-4}^{K})=\widetilde{\chi}_{-4}^{K}(2\sqrt{3})=-1$. Let $\rho_{2}$ be the unique nontrivial character of $(\mathcal{O}_{K}/\mathfrak{p}_{2}^{2})^{\times}$, where $\rho_{2}(a+b\sqrt{3})=(-1)^{b}\ (a,b\in\mathbf{Z})$ for $(a+b\sqrt{3},\pi_{2})=1$. This $\rho_{2}$ is not an ideal class character.  Taking $\pi_{2}$ as $\varpi_{\mathfrak{p}_{2}}$ in Section \ref{sect:n-r}, put $\psi:=\rho_{2}\chi_{-4}^{K},\psi':=\rho_{2}\mathbf{1}_{\mathfrak{p}_{2}}\in(\mathcal{O}_{K}/\mathfrak{p}_{2}^{2})^{\ast}\times C_{\mathfrak{p}_{2}^{2}}^{\ast}$ where we regard $\chi_{-4}^{K}$ and $\mathbf{1}_{\mathfrak{p}_{2}}$ as elements of $C_{\mathfrak{p}_{2}^{2}}^{\ast}$. Then $\psi$ and $\psi'$ satisfy the condition (\ref{cond:psi}), and $\psi\psi'$ is equal to a Hecke character $\chi_{-4}^{K}$, and $\mathfrak{f}_{\psi}=\mathfrak{f}_{\psi'}=2,\tau_{K}(\psi)=2,\tau_{K}(\psi')=-2$.

The square $\theta(\mathfrak{z})^{2}$ of the theta series is a Hilbert modular form for $\Gamma_{0}(\mathfrak{d}_{K}^{-1},4\mathfrak{d}_{K})$ of weight $1$ with character $\chi_{-4}^{K}$. We take as a set $\mathcal{C}_{0}(4)$ of representatives of cusps of $\Gamma_{0}(\mathfrak{d}_{K}^{-1},4\mathfrak{d}_{K})$,
\begin{align*}
\mathcal{C}_{0}(4)=\{\tfrac{1}{8\sqrt{3}},\tfrac{1}{4\pi_{2}\sqrt{3}},\tfrac{1}{4\sqrt{3}},\tfrac{\varepsilon_{0}}{4\sqrt{3}},\tfrac{1}{2\pi_{2}\sqrt{3}},\tfrac{1}{2\sqrt{3}}\},
\end{align*}
and as a set $\mathcal{C}_{0}(2)$ of representatives of cusps of $\Gamma_{0}(\mathfrak{d}_{K}^{-1},2\mathfrak{d}_{K})$,
\begin{align*}
\mathcal{C}_{0}(2)=\{\tfrac{1}{4\sqrt{3}},\tfrac{1}{2\pi_{2}\sqrt{3}},\tfrac{1}{2\sqrt{3}}\}.
\end{align*}
Transformation formulas for theta series (see for example \cite{Tsuyumine4}, Sect.3) give the values of $\theta(\mathfrak{z})^{2}$ at cusps as in Table \ref{tbl:vtac}.
\begin{table}[htb]
\caption{Values of $\theta(\mathfrak{z})^{2}$ at cusps.}
$
\begin{array}{c|cccccc}
\mathcal{C}_{0}(4)&\tfrac{1}{8\sqrt{3}}&\tfrac{1}{4\pi_{2}\sqrt{3}}&\tfrac{1}{4\sqrt{3}}&\tfrac{\varepsilon_{0}}{4\sqrt{3}}&\tfrac{1}{2\pi_{2}\sqrt{3}}&\tfrac{1}{2\sqrt{3}}\\\hline
\theta(\mathfrak{z})^{2}&1&0&1&0&0&2^{-2}
\end{array}
$\label{tbl:vtac}
\end{table}

We express $\theta(\mathfrak{z})^{2}$ as a linear combination of Eisenstein series of weight $1$.

By (\ref{mvc1}) and (\ref{mvc2}), and by equations $L_{K}(0,\chi_{-4}^{K})=3^{-1}$ and $L_{K}(0,\widetilde{\chi}_{-4}^{K})=2^{-1}3^{-1}$, the values at cusps of Eisenstein series  $G_{1,\chi_{-4}^{K}}(\mathfrak{z};\mathcal{O}_{K},\mathcal{O}_{K}),G_{1,\chi_{-4}^{K}}^{\mathbf{1}_{\mathfrak{p}_{2}}}(\mathfrak{z};\mathcal{O}_{K},\mathcal{O}_{K})$,\linebreak $G_{1,\chi_{-4}^{K}}^{\mathbf{1}_{\mathfrak{p}_{2}}}(\mathfrak{z};(2),\mathcal{O}_{K}),G_{1,\psi}^{\psi'}(\mathfrak{z};\mathcal{O}_{K},\mathcal{O}_{K})$ are obtained as in Table \ref{tbl:ve1ac}, where these Eisenstein series are modular forms for $\Gamma_{0}(\mathfrak{d}_{K}^{-1},4\mathfrak{d}_{K})$ of weight $1$ with character $\chi_{-4}^{K}$.
\begin{table}[htb]
\caption{Values at cusps, of Eisenstein series of weight $1$.}
$
\begin{array}{c|cccccc}
\mathcal{C}_{0}(2)&\tfrac{1}{8\sqrt{3}}&\tfrac{1}{4\pi_{2}\sqrt{3}}&\tfrac{1}{4\sqrt{3}}&\tfrac{\varepsilon_{0}}{4\sqrt{3}}&\tfrac{1}{2\pi_{2}\sqrt{3}}&\tfrac{1}{2\sqrt{3}}\\\hline
G_{1,\chi_{-4}^{K}}(\mathfrak{z};\mathcal{O}_{K},\mathcal{O}_{K})&3^{-1}&3^{-1}&3^{-1}&3^{-1}&3^{-1}&3^{-1}\\
G_{1,\chi_{-4}^{K}}^{\mathbf{1}_{\mathfrak{p}_{2}}}(\mathfrak{z};\mathcal{O}_{K},\mathcal{O}_{K})&0&0&0&0&2^{-1}&2^{-2}\\
G_{1,\chi_{-4}^{K}}^{\mathbf{1}_{\mathfrak{p}_{2}}}(\mathfrak{z};(2),\mathcal{O}_{K})&0&2^{-1}&2^{-2}&2^{-2}&-2^{-3}&2^{-4}\\
G_{1,\psi}^{\psi'}(\mathfrak{z};\mathcal{O}_{K},\mathcal{O}_{K})&0&0&1&-1&0&0\\
\end{array}$
\label{tbl:ve1ac}
\end{table}

The product of the second and fourth Eisenstein series of Table \ref{tbl:ve1ac} is a cusp form for $\Gamma_{0}(\mathfrak{d}_{K}^{-1},4\mathfrak{d}_{K})$ of weight $2$ since it vanishes at all the cusps. The Fourier expansions of it and of its $U(2)$-image are as follows;
\begin{align}
\label{eqn:pefe}&2^{-4}G_{1,\chi_{-4}^{K}}^{\mathbf{1}_{\mathfrak{p}_{2}}}(\mathfrak{z};\mathcal{O}_{K},\mathcal{O}_{K})G_{1,\psi}^{\psi'}(\mathfrak{z};\mathcal{O}_{K},\mathcal{O}_{K})\\
=&\mathbf{e}(\mathrm{tr}(2\mathfrak{z}))-\mathbf{e}(\mathrm{tr}((4{-}2\sqrt{3})\mathfrak{z}))-\mathbf{e}(\mathrm{tr}((4{+}2\sqrt{3})\mathfrak{z}))-3\mathbf{e}(\mathrm{tr}(6\mathfrak{z}))\nonumber\\
&+2\mathbf{e}(\mathrm{tr}((8{-}2\sqrt{3})\mathfrak{z}))+2\mathbf{e}(\mathrm{tr}((8{+}2\sqrt{3})\mathfrak{z}))+\cdots,\nonumber\\
\label{eqn:Xi}&\Xi(\mathfrak{z}):=U(2)(2^{-4}G_{1,\chi_{-4}^{K}}^{\mathbf{1}_{\mathfrak{p}_{2}}}(\mathfrak{z};\mathcal{O}_{K},\mathcal{O}_{K})G_{1,\psi}^{\psi'}(\mathfrak{z};\mathcal{O}_{K},\mathcal{O}_{K}))\\
=&\mathbf{e}(\mathrm{tr}(\mathfrak{z}))-\mathbf{e}(\mathrm{tr}((2{-}\sqrt{3})\mathfrak{z}))-\mathbf{e}(\mathrm{tr}((2{+}\sqrt{3})\mathfrak{z}))-3\mathbf{e}(\mathrm{tr}(3\mathfrak{z}))\nonumber\\
&+2\mathbf{e}(\mathrm{tr}((4{-}\sqrt{3})\mathfrak{z}))+2\mathbf{e}(\mathrm{tr}((4{+}\sqrt{3})\mathfrak{z}))+\cdots,\nonumber
\end{align}
where $\Xi(\mathfrak{z})$ is in $\mathbf{S}_{2}(\Gamma(\mathfrak{d}_{K}^{-1},2\mathfrak{d}_{K}))$.

\begin{lemma} The following equality holds;
\begin{align}
\label{eqn:squaretheta}\theta(\mathfrak{z})^{2}=&3G_{1,\chi_{-4}^{K}}(\mathfrak{z};\mathcal{O}_{K},\mathcal{O}_{K})-2^{-1}5G_{1,\chi_{-4}^{K}}^{\mathbf{1}_{\mathfrak{p}_{2}}}(\mathfrak{z};\mathcal{O}_{K},\mathcal{O}_{K})\\
&-2G_{1,\chi_{-4}^{K}}^{\mathbf{1}_{\mathfrak{p}_{2}}}(\mathfrak{z};(2),\mathcal{O}_{K})+2^{-1}G_{1,\psi}^{\psi'}(\mathfrak{z};\mathcal{O}_{K},\mathcal{O}_{K}).\nonumber
\end{align}
\end{lemma}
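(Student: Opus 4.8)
The plan is to exhibit both sides of (\ref{eqn:squaretheta}) as elements of the single space $\mathbf{M}_{1}(\Gamma_{0}(\mathfrak{d}_{K}^{-1},4\mathfrak{d}_{K}),\chi_{-4}^{K})$, to check that they take the same value at every cusp, and thereby reduce the identity to the vanishing of a weight-one cusp form. The left-hand side $\theta(\mathfrak{z})^{2}$ lies in this space as already noted, and each of the four Eisenstein series on the right is stated to be a modular form for $\Gamma_{0}(\mathfrak{d}_{K}^{-1},4\mathfrak{d}_{K})$ of weight $1$ with character $\chi_{-4}^{K}$, so any linear combination is too. Thus it suffices to pin down the coefficients from the cusp data and then argue that nothing in the cuspidal kernel obstructs the equality.

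First I would recover the coefficients $3,-2^{-1}5,-2,2^{-1}$ from Table \ref{tbl:ve1ac} rather than assume them. Writing $R(\mathfrak{z})$ for a combination $c_{1}G_{1,\chi_{-4}^{K}}(\mathfrak{z};\mathcal{O}_{K},\mathcal{O}_{K})+c_{2}G_{1,\chi_{-4}^{K}}^{\mathbf{1}_{\mathfrak{p}_{2}}}(\mathfrak{z};\mathcal{O}_{K},\mathcal{O}_{K})+c_{3}G_{1,\chi_{-4}^{K}}^{\mathbf{1}_{\mathfrak{p}_{2}}}(\mathfrak{z};(2),\mathcal{O}_{K})+c_{4}G_{1,\psi}^{\psi'}(\mathfrak{z};\mathcal{O}_{K},\mathcal{O}_{K})$, the rows of Table \ref{tbl:ve1ac} give the four tuples of cusp values. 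These four tuples are linearly independent: on the cusps $\tfrac{1}{8\sqrt{3}},\tfrac{1}{4\pi_{2}\sqrt{3}},\tfrac{1}{4\sqrt{3}},\tfrac{1}{2\pi_{2}\sqrt{3}}$ the corresponding $4\times4$ matrix of values becomes, after ordering the columns as $G_{1},G_{3},G_{4},G_{2}$, lower triangular with nonzero diagonal $(3^{-1},2^{-1},1,2^{-1})$. Hence the map sending a combination to its tuple of cusp values is injective on their span, and imposing $\kappa(\cdot,R)=\kappa(\cdot,\theta(\mathfrak{z})^{2})$ at these four cusps (with the values of $\theta(\mathfrak{z})^{2}$ read from Table \ref{tbl:vtac}) determines $(c_{1},c_{2},c_{3},c_{4})$ uniquely; solving in the order $c_{1}$ from $\tfrac{1}{8\sqrt{3}}$, then $c_{3}$, then $c_{4}$, then $c_{2}$ yields precisely the coefficients appearing in (\ref{eqn:squaretheta}).

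Next I would confirm that, with these coefficients, the remaining two cusp equations also hold. At $\tfrac{\varepsilon_{0}}{4\sqrt{3}}$ Table \ref{tbl:ve1ac} gives $3\cdot3^{-1}-2\cdot2^{-2}+2^{-1}\cdot(-1)=0$, and at $\tfrac{1}{2\sqrt{3}}$ it gives $3\cdot3^{-1}-2^{-1}5\cdot2^{-2}-2\cdot2^{-4}=2^{-2}$, which match the values $0$ and $2^{-2}$ of $\theta(\mathfrak{z})^{2}$ in Table \ref{tbl:vtac}. Since $\mathcal{C}_{0}(4)$ is a full set of cusp representatives, this shows that $\theta(\mathfrak{z})^{2}-R(\mathfrak{z})$ is a holomorphic Hilbert modular form of weight $1$ with character $\chi_{-4}^{K}$ vanishing at every cusp, that is, an element of $\mathbf{S}_{1}(\Gamma_{0}(\mathfrak{d}_{K}^{-1},4\mathfrak{d}_{K}),\chi_{-4}^{K})$.

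The main obstacle is the final step: concluding $\theta(\mathfrak{z})^{2}=R(\mathfrak{z})$ requires the vanishing of this weight-one cusp space. I expect to establish $\mathbf{S}_{1}(\Gamma_{0}(\mathfrak{d}_{K}^{-1},4\mathfrak{d}_{K}),\chi_{-4}^{K})=0$ either by the pertinent dimension count, or equivalently by noting that a nonzero such form would produce a totally odd two-dimensional Artin representation of $K=\mathbf{Q}(\sqrt{3})$ ramified only above $2$, of which there is none. Given that input the difference must vanish, proving (\ref{eqn:squaretheta}). Should one wish to sidestep the cuspidal vanishing, the same conclusion follows by first bounding $\dim\mathbf{M}_{1}(\Gamma_{0}(\mathfrak{d}_{K}^{-1},4\mathfrak{d}_{K}),\chi_{-4}^{K})$ and then matching, for sufficiently many small totally positive $\nu$, the representation numbers of $\nu$ by two squares in $\mathcal{O}_{K}$ (the Fourier coefficients of $\theta(\mathfrak{z})^{2}$) against the divisor sums $\sigma_{0,\cdot}^{\cdot}(\nu)$ of the four Eisenstein series.
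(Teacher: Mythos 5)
Your reduction of the identity to the vanishing of a cusp form is sound and agrees with the paper's setup: the coefficient determination from Table \ref{tbl:ve1ac} and the verification that all six cusp values of the right-hand side match those of $\theta(\mathfrak{z})^{2}$ in Table \ref{tbl:vtac} are correct, so the difference $f$ does lie in $\mathbf{S}_{1}(\Gamma_{0}(\mathfrak{d}_{K}^{-1},4\mathfrak{d}_{K}),\chi_{-4}^{K})$. The gap is in the final step, which is the actual content of the lemma. There is no ``pertinent dimension count'' for a space of weight-one cusp forms: the standard geometric dimension formulas (Riemann--Roch/arithmetic genus on the Hilbert modular surface) break down precisely at weight one, which is why the question cannot be closed the way you close it. The alternative you offer --- that a nonzero form would give a totally odd two-dimensional Artin representation of $\mathbf{Q}(\sqrt{3})$ with the prescribed conductor and determinant, and that none exists --- is a genuinely different and much heavier route: it presupposes the correspondence between holomorphic weight-one Hilbert eigenforms and Artin representations in this setting, and then requires an actual classification argument ruling out dihedral (e.g.\ induced from ray class characters of $K(\sqrt{-1})=\mathbf{Q}(\zeta_{12})$, which is unramified over $K$ outside infinity since the narrow class number of $K$ is $2$) as well as exotic types. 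You assert the conclusion but execute none of this, and your fallback (``bounding $\dim\mathbf{M}_{1}$'') runs into the same absence of a weight-one dimension formula.

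The paper avoids weight one entirely by passing to weight two, where the arithmetic genus of (a congruence cover of) the Hilbert modular surface gives the bound $\dim\mathbf{S}_{2}(\Gamma_{0}(\mathfrak{d}_{K}^{-1},4\mathfrak{d}_{K}))\le 5$. Assuming $f\ne0$, one multiplies $f$ by each of the four weight-one Eisenstein series and also squares it, producing five linearly independent weight-two cusp forms whose first four Fourier coefficients all vanish (since those of $f$ vanish for $\nu=1,2,2\pm\sqrt{3}$); together with the two explicit weight-two cusp forms \eqref{eqn:pefe} and \eqref{eqn:Xi}, whose leading coefficients do not vanish, this yields seven linearly independent elements of a space of dimension at most five, a contradiction. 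If you want a complete proof along your lines you must either supply the Artin-representation classification in full or adopt some such multiplication trick to move the linear-algebra problem into a weight where a dimension bound is actually available.
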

\begin{proof} Let $\Gamma(\mathfrak{d}_{K}^{-1},\mathfrak{d}_{K})[2]:=\{\left(\begin{smallmatrix}\alpha&\beta\\\gamma&\delta\end{smallmatrix}\right)\in\mathrm{SL}_{2}(K)\mid\alpha\equiv\delta\equiv1\pmod{2},\beta\in 2\mathfrak{d}_{K}^{-1},\gamma\in 2\mathfrak{d}_{K}\}$, which is called a congruence subgroup of level $2$. Then $\Gamma(\mathfrak{d}_{K}^{-1},\mathfrak{d}_{K})[2]$ acts freely on $\mathfrak{H}^{2}$. The arithmetic genus of a compactified nonsingular model of\linebreak $\Gamma(\mathfrak{d}_{K}^{-1},\mathfrak{d}_{K})[2]\backslash\mathfrak{H}^{2}$ is determined by the volume and by the contribution from cusp singularities (van der Geer \cite{Geer} Chap.\,II$\sim$IV), and it is computed to be $4+1/12\,(4+4+4+4+4+4)=6$. Hence $\dim \mathbf{S}_{2}(\Gamma(\mathfrak{d}_{K}^{-1},\mathfrak{d}_{K})[2])=5$ where $\mathbf{S}_{2}(\Gamma(\mathfrak{d}_{K}^{-1},\mathfrak{d}_{K})[2])$ denotes the space of cusp forms for $\Gamma(\mathfrak{d}_{K}^{-1},\mathfrak{d}_{K})[2]$ of weight $2$. Since $\left(\begin{smallmatrix}2&0\\0&1\end{smallmatrix}\right)^{-1}\Gamma(\mathfrak{d}_{K}^{-1},\mathfrak{d}_{K})[2]\left(\begin{smallmatrix}2&0\\0&1\end{smallmatrix}\right)$  is a subgroup of $\Gamma(\mathfrak{d}_{K}^{-1},4\mathfrak{d}_{K})$ of index $2$, the dimension of the space of cusp forms for $\Gamma(\mathfrak{d}_{K}^{-1},4\mathfrak{d}_{K})$ of weight $2$ is at most $5$.

Let $f(\mathfrak{z})$ is a Hilbert modular form given as the left hand side minus the right hand side of the equation (\ref{eqn:squaretheta}). Then it is a cusp form from Table \ref{tbl:vtac} and Table \ref{tbl:ve1ac}. It is easy to check that the Fourier coefficients of $f$ are $0$ at least for $\nu=1,2,2\pm\sqrt{3}$. We show that $f=0$. Suppose that  $f\ne0$. Then  $f(\mathfrak{z})G_{1,\chi_{-4}^{K}}(\mathfrak{z};\mathcal{O}_{K},\mathcal{O}_{K}),f(\mathfrak{z})G_{1,\chi_{-4}^{K}}^{\mathbf{1}_{\mathfrak{p}_{2}}}(\mathfrak{z};\mathcal{O}_{K},\mathcal{O}_{K})$, $f(\mathfrak{z})G_{1,\chi_{-4}^{K}}^{\mathbf{1}_{\mathfrak{p}_{2}}}(\mathfrak{z};(2),\mathcal{O}_{K})),f(\mathfrak{z})G_{1,\psi}^{\psi'}(\mathfrak{z};\mathcal{O}_{K},\mathcal{O}_{K}),f(\mathfrak{z})^{2}$ are linearly independent.  The first four Fourier coefficients of these cusp forms vanish. Then these cusp forms together with $G_{1,\chi_{-4}^{K}}^{\mathbf{1}_{\mathfrak{p}_{2}}}(\mathfrak{z};\mathcal{O}_{K},\mathcal{O}_{K})G_{1,\psi}^{\psi'}(\mathfrak{z};\mathcal{O}_{K},\mathcal{O}_{K})$ and $\Xi(\mathfrak{z})$ are linearly independent by the Fourier expansions (\ref{eqn:pefe}) and (\ref{eqn:Xi}). This contradicts to $\dim \mathbf{S}_{2}(\Gamma(\mathfrak{d}_{K}^{-1},4\mathfrak{d}_{K}))\le 5$. Hence $f=0$.
\end{proof}

Comparing the Fourier coefficients of the both sides of (\ref{eqn:squaretheta}), we obtain the following;
\begin{corollary} Let $r_{2,K}(\nu)$ denote the number representing  $\nu$ as sums of two integer squares in $K$. Let $\sigma_{0,\psi}^{\psi'}$ be as in (\ref{def:sigma1}). Then 
\begin{align*}
r_{2,K}(\nu)=2\{6\sigma_{0,\chi_{-4}^{K}}(\nu)+(\rho_{2}(\nu)-5)\sigma_{0,\chi_{-4}^{K}}^{\mathbf{1}_{\mathfrak{p}_{2}}}(\nu)-4\sigma_{0,\chi_{-4}^{K}}^{\mathbf{1}_{\mathfrak{p}_{2}}}(\nu/2)\}\quad(\nu\in\mathcal{O}_{K},\succ0).
\end{align*}
\end{corollary}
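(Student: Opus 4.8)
The plan is to read off and compare the $\nu$-th Fourier coefficients on the two sides of the identity (\ref{eqn:squaretheta}) proved in the Lemma. On the left, since $\theta(\mathfrak{z})=\sum_{\mu\in\mathcal{O}_{K}}\mathbf{e}(\mathrm{tr}(\mu^{2}\mathfrak{z}))$, squaring gives $\theta(\mathfrak{z})^{2}=\sum_{\nu\in\mathcal{O}_{K}}r_{2,K}(\nu)\mathbf{e}(\mathrm{tr}(\nu\mathfrak{z}))$, so that the coefficient of $\mathbf{e}(\mathrm{tr}(\nu\mathfrak{z}))$ is exactly $r_{2,K}(\nu)$ by the definition of $r_{2,K}$ as a count of pairs $(\mu_{1},\mu_{2})$ with $\mu_{1}^{2}+\mu_{2}^{2}=\nu$. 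On the right I would extract the $\nu$-th coefficient of each of the four Eisenstein series from the general formula (\ref{eqn:e-s}). Here $g=2$ and $k=1$, so each coefficient carries the factor $2^{g}=4$ and the exponent $k-1=0$, reducing the divisor sums to the character sums (\ref{def:sigma1}). For the first series the coefficient is $4\sigma_{0,\chi_{-4}^{K}}(\nu)$ and for the second it is $4\sigma_{0,\chi_{-4}^{K}}^{\mathbf{1}_{\mathfrak{p}_{2}}}(\nu)$, read off directly from (\ref{eqn:e-s}) with $\mathfrak{D}=\mathcal{O}_{K}$.

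The two genuinely nontrivial coefficients are those of the third and fourth series. For $G_{1,\chi_{-4}^{K}}^{\mathbf{1}_{\mathfrak{p}_{2}}}(\mathfrak{z};(2),\mathcal{O}_{K})$ I would first recover the ideal data hidden in the compressed notation: from $\mathfrak{f}_{\chi_{-4}^{K}}=\mathcal{O}_{K}$ and $\chi_{-4}^{K}(\mathfrak{p}_{2})=0$ one gets $\mathfrak{e}_{\chi_{-4}^{K}}=\mathfrak{p}_{2}$, and likewise $\mathfrak{e}_{\mathbf{1}_{\mathfrak{p}_{2}}}=\mathfrak{p}_{2}$; requiring the first argument $\mathfrak{N}\mathfrak{e}_{\psi}^{-1}\mathfrak{N}'\mathfrak{e}_{\psi'}^{-1}$ to equal $(2)=\mathfrak{p}_{2}^{2}$ then forces the shift ideal $\mathfrak{N}^{-1}\mathfrak{e}_{\psi}\mathfrak{N}'^{-1}\mathfrak{e}_{\psi'}$ appearing in (\ref{eqn:e-s}) to equal $\mathfrak{p}_{2}^{-2}=(1/2)$. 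Feeding $\mathfrak{M}=(1/2)$ into the definition (\ref{def:sigma}) turns the sum over $\mathfrak{A}$ with $\nu\mathfrak{M}\subset\mathfrak{A}\subset\mathcal{O}_{K}$ into the divisor sum over $\mathfrak{A}\mid(\nu/2)$, so the coefficient becomes $4\sigma_{0,\chi_{-4}^{K}}^{\mathbf{1}_{\mathfrak{p}_{2}}}(\nu/2)$, understood as $0$ unless $\nu/2\in\mathcal{O}_{K}$ and $\nu/2\succ0$. For $G_{1,\psi}^{\psi'}(\mathfrak{z};\mathcal{O}_{K},\mathcal{O}_{K})$ with $\psi=\rho_{2}\chi_{-4}^{K}$ and $\psi'=\rho_{2}\mathbf{1}_{\mathfrak{p}_{2}}$, I would invoke the remark following (\ref{def:sigma}) to pull the factor $\omega'=\rho_{2}$ out of the summation, obtaining $\sigma_{0,\psi}^{\psi'}(\nu)=\rho_{2}(\nu)\sigma_{0,\chi_{-4}^{K}}^{\mathbf{1}_{\mathfrak{p}_{2}}}(\nu)$, with $\rho_{2}(\nu)=(-1)^{b}$ for $\nu=a+b\sqrt{3}$; this is the source of the factor $\rho_{2}(\nu)$ in the statement.

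Finally I would assemble the four contributions with the scalars $3,-\tfrac{5}{2},-2,\tfrac{1}{2}$ from (\ref{eqn:squaretheta}), each scaled by $2^{g}=4$, and collect terms: the first series yields $12\sigma_{0,\chi_{-4}^{K}}(\nu)$; the second together with the fourth combine to $(2\rho_{2}(\nu)-10)\sigma_{0,\chi_{-4}^{K}}^{\mathbf{1}_{\mathfrak{p}_{2}}}(\nu)=2(\rho_{2}(\nu)-5)\sigma_{0,\chi_{-4}^{K}}^{\mathbf{1}_{\mathfrak{p}_{2}}}(\nu)$; and the third yields $-8\sigma_{0,\chi_{-4}^{K}}^{\mathbf{1}_{\mathfrak{p}_{2}}}(\nu/2)$. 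Factoring out $2$ gives the asserted formula. I expect the main subtlety to lie in the bookkeeping of the penultimate paragraph: correctly reconstructing $\mathfrak{N},\mathfrak{N}'$ and the shift ideal $(1/2)$ for the third series, and justifying $\sigma_{0,\psi}^{\psi'}(\nu)=\rho_{2}(\nu)\sigma_{0,\chi_{-4}^{K}}^{\mathbf{1}_{\mathfrak{p}_{2}}}(\nu)$ uniformly in $\nu$. The one boundary case to verify is $\pi_{2}\mid\nu$: there $\rho_{2}(\nu)$ degenerates, but $\sigma_{0,\chi_{-4}^{K}}^{\mathbf{1}_{\mathfrak{p}_{2}}}(\nu)$ also vanishes, since in $\sum_{\delta\mid\nu}\chi_{-4}^{K}(\delta)\mathbf{1}_{\mathfrak{p}_{2}}(\nu/\delta)$ each term is killed either by $\chi_{-4}^{K}(\delta)=0$ (when $\pi_{2}\mid\delta$) or by $\mathbf{1}_{\mathfrak{p}_{2}}(\nu/\delta)=0$ (when $\pi_{2}\nmid\delta$), so the identity persists with the convention that the product reads $0$.
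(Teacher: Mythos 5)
Your proposal is correct and is exactly the paper's argument: the paper's proof consists of the single instruction ``comparing the Fourier coefficients of both sides of (\ref{eqn:squaretheta})'', and your computation simply carries that out, correctly identifying the factor $2^{g}=4$, the shift ideal $\mathfrak{p}_{2}^{-2}$ for the third series, and the factor $\rho_{2}(\nu)$ coming from $\omega'$ in the fourth. The bookkeeping and the degenerate case $\pi_{2}\mid\nu$ are handled as the paper's conventions require.
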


In other words, for $\nu=a+b\sqrt{3}\succ0$, we have (i) $r_{2,K}(\nu)=0$ if  $v_{\mathfrak{p}_{2}}(\nu)=0$ and $b$ is odd,  or if $v_{\mathfrak{p}_{2}}(\nu)$ is odd, and (ii) $r_{2,K}(\nu)=4\sigma_{0,\chi_{-4}^{K}}(\nu)$ if $v_{\mathfrak{p}_{2}}(\nu)=0$ and $b$ is even, or if $v_{\mathfrak{p}_{2}}(\nu)=2$, and (iii) $r_{2,K}(\nu)=12\sigma_{0,\chi_{-4}^{K}}(\nu)$ if $2|v_{\mathfrak{p}_{2}}(\nu)\ge4$. 

The following corollary is for later use, which is rather technical.
\begin{corollary}\label{cor:stse}
(i) Assume that $\nu$ is odd, and it is represented as a sum of two integer squares. Then $x_{1},x_{2}\in\mathcal{O}_{K}$ satisfying $x_{1}^{2}+x_{2}^{2}=\pi_{2}^{2}\nu$, are both even.

(ii) Assume that $\nu\succ0,v_{\mathfrak{p}_{2}}(\nu)\ge4$, and $\nu$ is represented as a sum of two integer squares. Then there are $x_{1},x_{2}\in\mathfrak{p}_{2}$ satisfying $x_{1}^{2}+x_{2}^{2}=\nu$.
\end{corollary}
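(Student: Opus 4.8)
The plan is to handle the two parts separately, in each case reducing everything to the two-squares information already recorded in the Corollary above; no new modular-forms input is needed.

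For (i) I would first extract parity information from the hypotheses. Since $\nu$ is odd we have $v_{\mathfrak{p}_2}(\nu)=0$, and since $\nu$ is moreover a sum of two squares, the case $v_{\mathfrak{p}_2}(\nu)=0$ of the Corollary above forces $b$ to be even, so that $\nu\equiv1\pmod 2$ (equivalently $\rho_2(\nu)=1$). Reducing $x_1^2+x_2^2=\pi_2^2\nu$ modulo $\mathfrak{p}_2$, and using that the Frobenius is the identity on the residue field $\mathcal{O}_K/\mathfrak{p}_2\simeq\mathbf{F}_2$ together with $\mathfrak{p}_2\mid\pi_2^2\nu$, I get $x_1\equiv x_2\pmod{\mathfrak{p}_2}$, so $x_1$ and $x_2$ are either both even or both odd; the whole content is to exclude the both-odd case, which is the one genuinely delicate point. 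For this I would pass to the modulus $4$. On the one hand $\pi_2^2=4+2\sqrt3$ and $\nu\equiv1\pmod2$ give $\pi_2^2\nu\equiv2\sqrt3\pmod4$, whose $\sqrt3$-coefficient is odd. On the other hand, for an odd $x=a+b\sqrt3$ exactly one of $a,b$ is even, so $x^2=(a^2+3b^2)+2ab\sqrt3$ has $2ab\equiv0\pmod4$ and is congruent modulo $4$ to a rational integer; hence $x_1^2+x_2^2$ would have even $\sqrt3$-coefficient modulo $4$. These are incompatible, so the both-odd case cannot occur and $x_1,x_2$ are both even.

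For (ii) the plan is to recast the desired conclusion as a representability statement. Writing $x_i=\pi_2 z_i$, producing $x_1,x_2\in\mathfrak{p}_2$ with $x_1^2+x_2^2=\nu$ is exactly the same as showing that $\nu/\pi_2^2$ is itself a sum of two squares. Now $\pi_2^2=2\varepsilon_0\succ0$, so $\nu/\pi_2^2\succ0$, and $v_{\mathfrak{p}_2}(\nu/\pi_2^2)=v_{\mathfrak{p}_2}(\nu)-2\ge2$ is even; hence by the Corollary above $r_{2,K}(\nu/\pi_2^2)$ equals $4$ or $12$ times $\sigma_{0,\chi_{-4}^K}(\nu/\pi_2^2)$. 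The observation that makes everything go through is that $\chi_{-4}^K$ vanishes on even elements, so $\sigma_{0,\chi_{-4}^K}$ depends only on the odd part of its argument; since $\nu$ and $\nu/\pi_2^2$ share the same odd part, $\sigma_{0,\chi_{-4}^K}(\nu/\pi_2^2)=\sigma_{0,\chi_{-4}^K}(\nu)$. As $\nu$ is a sum of two squares with $v_{\mathfrak{p}_2}(\nu)\ge4$ even, the same formula applied to $\nu$ gives $\sigma_{0,\chi_{-4}^K}(\nu)>0$, whence $r_{2,K}(\nu/\pi_2^2)>0$. Any representation $\nu/\pi_2^2=z_1^2+z_2^2$ then yields $x_i=\pi_2 z_i\in\mathfrak{p}_2$ with $x_1^2+x_2^2=\nu$. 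I expect the substantive points to be the reduction to $\nu/\pi_2^2$ and the $\mathfrak{p}_2$-independence of $\sigma_{0,\chi_{-4}^K}$; once these are isolated the remainder is bookkeeping.
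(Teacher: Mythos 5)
Your argument is correct. For part (ii) you follow essentially the same route as the paper: divide out a power of $\pi_{2}^{2}$, observe that $\sigma_{0,\chi_{-4}^{K}}$ is insensitive to the $\mathfrak{p}_{2}$-part because $\chi_{-4}^{K}$ kills even divisors, conclude positivity of $r_{2,K}(\nu/\pi_{2}^{2})$ from the displayed formula, and scale back up (the paper descends all the way to $v_{\mathfrak{p}_{2}}=2$ rather than just by one factor of $\pi_{2}^{2}$, but that is immaterial). For part (i) your route is genuinely different. The paper proves the equality of counts $r_{2,K}(\pi_{2}^{2}\nu)=4\sigma_{0,\chi_{-4}^{K}}(\pi_{2}^{2}\nu)=4\sigma_{0,\chi_{-4}^{K}}(\nu)=r_{2,K}(\nu)$ and deduces that the injection $(x_{1}',x_{2}')\mapsto(\pi_{2}x_{1}',\pi_{2}x_{2}')$ from representations of $\nu$ to representations of $\pi_{2}^{2}\nu$ must be a bijection, so every representation of $\pi_{2}^{2}\nu$ has both coordinates even. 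You instead argue by congruences: reduction mod $\mathfrak{p}_{2}$ forces $x_{1}\equiv x_{2}$, and the mod $4$ computation ($\pi_{2}^{2}\nu\equiv2\sqrt{3}$ while the square of an odd integer is congruent to a rational integer mod $4$) rules out the both-odd case. Both are complete; yours is more elementary in that it uses only the vanishing statement $r_{2,K}=0$ for $b$ odd (to get $\nu\equiv1\pmod{2}$) rather than the exact value of $r_{2,K}$, while the paper's counting argument is shorter and transfers verbatim to any situation where the two representation numbers coincide. One small presentational point: in (ii) you assert that $v_{\mathfrak{p}_{2}}(\nu/\pi_{2}^{2})$ is even before justifying that $v_{\mathfrak{p}_{2}}(\nu)$ is even; this follows from representability of $\nu$ (the paper records this step explicitly) and should be stated.
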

\begin{proof} (i) Since $\nu$ is represented as a sum of two squares, $\rho_{2}(\nu)$ is $1$. Then $r_{2,K}(\pi_{2}^{2}\nu)=4\sigma_{0,\chi_{-4}^{K}}(\pi_{2}^{2}\nu)=4\sigma_{0,\chi_{-4}^{K}}(\nu)=r_{2,K}(\nu)$. Hence all the solutions of $x_{1}^{2}+x_{2}^{2}=\pi_{2}^{2}\nu$ is obtained by $x_{1}=\pi_{2}x_{1}',x_{2}=\pi_{2}x_{2}'$ with $x_{1}'^{2}+x_{2}'^{2}=\nu$, which shows the assertion.

(ii) Since $\nu$ is represented as a sum of two squares, $v_{\mathfrak{p}_{2}}(\nu)$ is even. Then $r_{2,K}(\pi^{2-v_{\mathfrak{p}_{2}}(\nu)}\nu)>0$, For solutions $x_{1}',x_{2}'$ of $x_{1}'^{2}+x_{2}'^{2}=\pi^{2-v_{\mathfrak{p}_{2}}(\nu)}\nu$ , $x_{1}=\pi_{2}^{v_{\mathfrak{p}_{2}}(\nu)/2-1}x_{1}'$ and $x_{2}=\pi_{2}^{v_{\mathfrak{p}_{2}}(\nu)/2-1}x_{2}'$ satisfy $x_{1}^{2}+x_{2}^{2}=\nu$.
\end{proof}

We derive from Corollary \ref{cor:stse}, the following result on sums of two integer squares in $K$. Let $p_{i}>0$ denote an odd prime element of $O_{K}$ with positive norm, and let $q_{j}>0$ denote an odd prime element of $\mathcal{O}_{K}$ with negative norm. We may assume that the coefficient of $\sqrt{3}$ in $p_{i}$ is even by multiplying by the fundamental unit $\varepsilon_{0}$ if necessary. Then a totally positive integer $\nu\in\mathcal{O}_{K}$ has a prime factorization as $\nu=\varepsilon_{0}^{k}\pi_{2}^{e}p_{1}^{m_{1}}\cdots p_{s}^{m_{s}}q_{1}^{n_{1}}\cdots q_{t}^{n_{t}}$ with $e+\sum_{1\le j\le t} n_{j}\equiv0\pmod{2}$. Then the necessary and sufficient condition that $\nu$ is expressed as a sum of two integers in $K$, is that (I) $e=0$ and $k\equiv n_{1}\equiv\cdots n_{t}\equiv0\pmod{2}$, or that (II) $2|e>0$ and $n_{1}\equiv\cdots n_{t}\equiv0\pmod{2}$. The number of representations is $4\prod_{i}(1+m_{i})$ for $e=0,2$, and it is $12\prod_{i}(1+m_{i})$ for $e\ge4,e\equiv0\pmod{2}$.

\section{Quadratic extensions of $\mathbf{Q}(\sqrt{3})$}
Let $\alpha\in\mathcal{O}_{K}$ be square-free and not necessarily totally positive, and let $F=K(\sqrt{\alpha})$. We denote by $\psi_{\alpha},\mathfrak{f}_{\alpha}(=d_{F/K})$ and $\mathfrak{d}_{F/K}$, the character associated the extension, the conductor of the extension and the relative different of $F$ over $K$ respectively. The norm map of $F/K$ and the norm map of $F/\mathbf{Q}$ are denoted by $\mathrm{N}_{F/K}$ and $\mathrm{N}_{F}$ respectively. Let $\chi_{-4}^{F}:=\chi_{-4}\circ\mathrm{N}_{F}=\chi_{-4}^{K}\circ\mathrm{N}_{F/K}$. We classify quadratic extensions by the congruence condition on $\alpha$ as in Table \ref{tbl:qeK}, where we understand that $\alpha\equiv1\pmod{4\mathfrak{p}_{2}}$ or $\alpha\equiv3\pmod{4\mathfrak{p}_{2}}$ in Case $(\mathrm{A})$ and so on. 
\begin{table}[htb]
\caption{Quadratic extensions $F=K(\sqrt{3},\sqrt{\alpha})$ of $\mathbf{Q}(\sqrt{3})$.}
$
\begin{array}{ccccc}
&\alpha\equiv&v_{\mathfrak{p}_{2}}(\mathfrak{f}_{\alpha})&\mathfrak{d}_{F/K}&\mathfrak{p}_{2}\mbox{ at }F\\\hline
(\mathrm{A})&1,3\ (\bmod\,4\mathfrak{p}_{2})&0&(\sqrt{\alpha})&\mbox{split}\\
(\mathrm{B})&5,7\ (\bmod\,4\mathfrak{p}_{2})&0&(\sqrt{\alpha})&\mbox{inert}\\
(\mathrm{C}_{1})&1+2\sqrt{3},3+2\sqrt{3}\ (\bmod\,4)&2&(\pi_{2}\sqrt{\alpha})&\mbox{ramify}\\
(\mathrm{C}_{2})&\sqrt{3}\ (\bmod\,2)&4&(2\sqrt{\alpha})&\mbox{ramify}\\
(\mathrm{D})&1+\sqrt{3}\ (\bmod\,2)&5&(2\sqrt{\alpha})&\mbox{ramify}\\\hline
\end{array}$
\label{tbl:qeK}
\end{table}

Let $\mathfrak{P}_{2}\subset\mathcal{O}_{F}$ be the ideal with $\mathfrak{P}_{2}^{2}=\mathfrak{p}_{2}\mathcal{O}_{F}$ in Cases ($\mathrm{C}_{1}$),($\mathrm{C}_{2}$) and ($\mathrm{D}$). Then $\chi_{-4}^{F}\in C_{\mathfrak{p}_{2}\mathcal{O}_{F}}^{\ast}$ in Cases ($\mathrm{A}$) and ($\mathrm{B}$), and $\chi_{-4}^{F}\in C_{\mathfrak{P}_{2}}^{\ast}$ in Cases ($\mathrm{C}_{1}$),($\mathrm{C}_{2}$) and ($\mathrm{D}$). In either case, the conductor $\mathfrak{f}_{\chi_{-4}^{F}}$ is $\mathcal{O}_{F}$. There holds
\begin{align*}
\chi_{-4}^{K}\psi_{\alpha}=\begin{cases}\psi_{-\alpha}\mathbf{1}_{\mathfrak{p}_{2}}&((\mathrm{A}),(\mathrm{B})),\\
\psi_{-\alpha}&((\mathrm{C}_{1}),(\mathrm{C}_{2}),(\mathrm{D})).\end{cases}
\end{align*}
Let $\widetilde{\chi}_{-4}^{F}$ be the primitive character associated with $\chi_{-4}^{F}$. Then $\widetilde{\chi}_{-4}^{F}\in C_{\mathcal{O}_{F}}^{\ast}$, and $\tau_{F}(\widetilde{\chi}_{-4}^{F})=1$. In Cases ($\mathrm{C}_{1}$),($\mathrm{C}_{2}$) and ($\mathrm{D}$), the value of $\widetilde{\chi}_{-4}^{F}$ at $\mathfrak{P}_{2}$ is $-1$. There holds
\begin{align}
L_{F}(s,\chi_{-4}^{F})&=L_{K}(s,\chi_{-4}^{K})L_{K}(s,\psi_{-\alpha})(1-\psi_{-\alpha}(\mathfrak{p}_{2})2^{-s}),\nonumber\\
L_{F}(s,\widetilde{\chi}_{-4}^{F})&=L_{K}(s,\widetilde{\chi}_{-4}^{K})L_{K}(s,\psi_{-\alpha}),\label{eqn:vl}
\end{align}
and hence $L_{F}(0,\widetilde{\chi}_{-4}^{F})$ equals $2^{-1}3^{-1}L_{K}(0,\psi_{-\alpha})$, and $L_{F}(0,\chi_{-4}^{F})$ equals $2\cdot3^{-1}\times$ $L_{K}(0,\psi_{-\alpha})$ in Case $(\mathrm{A})$, $0$ in Case $(\mathrm{B})$, or $3^{-1}L_{K}(0,\psi_{-\alpha})$ in Case $(\mathrm{C}_{1})$, $(\mathrm{C}_{2})$ or  $(\mathrm{D})$. Let $\psi,\psi'$ be as in the precedent section. Then $\mathfrak{f}_{\psi\circ\mathrm{N}_{F/K}}=\mathfrak{f}_{\psi'\circ\mathrm{N}_{F/K}}=(2)$ and $\tau_{F}(\psi\circ\mathrm{N}_{F/K})=\tau_{F}(\psi'\circ\mathrm{N}_{F/K})=4$ in Cases $(\mathrm{A})$ and $(\mathrm{B})$, and $\mathfrak{f}_{\psi\circ\mathrm{N}_{F/K}}=\mathfrak{f}_{\psi'\circ\mathrm{N}_{F/K}}=\mathfrak{p}_{2}$ and $\tau_{F}(\psi\circ\mathrm{N}_{F/K})=\tau_{F}(\psi'\circ\mathrm{N}_{F/K})=2$ in Cases $(\mathrm{C}_{1})$,$(\mathrm{C}_{2})$ and $(\mathrm{D})$.

Let $\alpha$ be a totally positive square-free integer of $K$. By (\ref{eqn:squaretheta}), we have
\begin{align}
\label{eqn:sltt}&\mathscr{S}_{\alpha,\chi_{-4}^{K}}(\theta(\mathfrak{z})^{3})\\
=&3\mathscr{S}_{\alpha,\chi_{-4}^{K}}(\theta(\mathfrak{z})G_{1,\chi_{-4}^{K}}(\mathfrak{z};\mathcal{O}_{K},\mathcal{O}_{K}))-2^{-1}5\mathscr{S}_{\alpha,\chi_{-4}^{K}}(\theta(\mathfrak{z})G_{1,\chi_{-4}^{K}}^{\mathbf{1}_{\mathfrak{p}_{2}}}(\mathfrak{z};\mathcal{O}_{K},\mathcal{O}_{K}))\nonumber\\
&-2\mathscr{S}_{\alpha,\chi_{-4}^{K}}(\theta(\mathfrak{z})G_{1,\chi_{-4}^{K}}^{\mathbf{1}_{\mathfrak{p}_{2}}}(\mathfrak{z};(2),\mathcal{O}_{K}))+2^{-1}\mathscr{S}_{\alpha,\chi_{-4}^{K}}(\theta(\mathfrak{z})G_{1,\psi}^{\psi'}(\mathfrak{z};\mathcal{O}_{K},\mathcal{O}_{K})),\nonumber
\end{align}
which is a Hilbert modular form of weight $2$ for $\Gamma_{0}(\mathfrak{d}_{K}^{-1},2\mathfrak{d}_{K})$.

Shimura lifts of products of the theta series and Eisenstein series are explicitly constructed in \cite{Tsuyumine4} Sect. 6, Sect. 7, which are essentially the restricts to the diagonal, of Hilbert-Eisenstein series on the field $F$. Let $\iota:\mathfrak{H}^{2}\longrightarrow\mathfrak{H}^{4}$ be the diagonal map associated with the inclusion of $K$ into $F$. For a Hilbert modular form $f(\mathfrak{Z})\ (\mathfrak{Z}\in\mathfrak{H}^{4})$ on $F$ of weight $k$, $f(\iota(\mathfrak{z}))\ (\mathfrak{z}\in\mathfrak{H}^{2})$ is a Hilbert modular from on $K$ of weight $2k$. We put
\begin{align*}
\lambda_{2,\chi_{-4}^{K}}(\mathfrak{z};(\alpha),\mathcal{O}_{K})&:=\begin{cases}U(2)(G_{1,\chi_{-4}^{F}}(\iota(\mathfrak{z}),\mathcal{O}_{F},\mathfrak{d}_{F/K}^{-1}))&(\mathrm{A}),(\mathrm{B}),\\
U(2)(G_{1,\chi_{-4}^{F}}(\iota(\mathfrak{z}),\mathfrak{p}_{2}\mathcal{O}_{F},\mathfrak{d}_{F/K}^{-1}))&(\mathrm{C_{1}}),\\
G_{1,\chi_{-4}^{F}}(\iota(\mathfrak{z}),\mathcal{O}_{F},\mathfrak{d}_{F/K}^{-1})&(\mathrm{C_{2}}),(\mathrm{D}),
\end{cases}\\
\lambda_{2,\chi_{-4}^{K}}^{\mathbf{1}_{\mathfrak{p}_{2}}}(\mathfrak{z};(\alpha),\mathcal{O}_{K})&:=\begin{cases}U(2)(G_{1,\chi_{-4}^{F}}^{\mathbf{1}_{\mathfrak{p}_{2}\mathcal{O}_{F}}}(\iota(\mathfrak{z}),\mathcal{O}_{F},\mathfrak{d}_{F/K}^{-1}))&(\mathrm{A}),(\mathrm{B}),\\
U(2)(G_{1,\chi_{-4}^{F}}^{\mathbf{1}_{\mathfrak{P}_{2}}}(\iota(\mathfrak{z}),\mathfrak{p}_{2}\mathcal{O}_{F},\mathfrak{d}_{F/K}^{-1}))&(\mathrm{C_{1}}),\\
G_{1,\chi_{-4}^{F}}^{\mathbf{1}_{\mathfrak{P}_{2}}}(\iota(\mathfrak{z}),\mathcal{O}_{F},\mathfrak{d}_{F/K}^{-1})&(\mathrm{C_{2}}),(\mathrm{D}),
\end{cases}\\
\lambda_{2,\chi_{-4}^{K}}^{\mathbf{1}_{\mathfrak{p}_{2}}}(\mathfrak{z};(\alpha),(2))&:=\begin{cases}U(2)(G_{1,\chi_{-4}^{F}}^{\mathbf{1}_{\mathfrak{p}_{2}\mathcal{O}_{F}}}(\iota(\mathfrak{z}),\mathfrak{p}_{2}\mathcal{O}_{F},\mathfrak{d}_{F/K}^{-1})))&(\mathrm{A}),(\mathrm{B}),\\
G_{1,\chi_{-4}^{F}}^{\mathbf{1}_{\mathfrak{P}_{2}}}(\iota(\mathfrak{z}),\mathcal{O}_{F},\mathfrak{d}_{F/K}^{-1})&(\mathrm{C_{1}}),\\
G_{1,\chi_{-4}^{F}}^{\mathbf{1}_{\mathfrak{P}_{2}}}(\iota(\mathfrak{z}),\mathfrak{p}_{2}\mathcal{O}_{F},\mathfrak{d}_{F/K}^{-1})&(\mathrm{C_{2}}),(\mathrm{D}),
\end{cases}\\
\lambda_{2,\psi}^{\psi'}(\mathfrak{z};(\alpha),\mathcal{O}_{K})&:=\begin{cases}U(2)(G_{1,\psi\circ\mathrm{N}_{F/K}}^{\psi'\circ\mathrm{N}_{F/K}}(\iota(\mathfrak{z}),\mathcal{O}_{F},\mathfrak{d}_{F/K}^{-1}))&(\mathrm{A}),(\mathrm{B}),\\
U(2)(G_{1,\psi\circ\mathrm{N}_{F/K}}^{\psi'\circ\mathrm{N}_{F/K}}(\iota(\mathfrak{z}),\mathfrak{p}_{2}\mathcal{O}_{F},\mathfrak{d}_{F/K}^{-1}))&(\mathrm{C_{1}}),\\
G_{1,\psi\circ\mathrm{N}_{F/K}}^{\psi'\circ\mathrm{N}_{F/K}}(\iota(\mathfrak{z}),\mathcal{O}_{F},\mathfrak{d}_{F/K}^{-1})&(\mathrm{C_{2}}),(\mathrm{D}).
\end{cases}
\end{align*}
In \cite{Tsuyumine4}, $\lambda_{2,\chi_{-4}^{K}}(\mathfrak{z};(\alpha),\mathcal{O}_{K})$, $\lambda_{2,\chi_{-4}^{K}}^{\mathbf{1}_{\mathfrak{p}_{2}}}(\mathfrak{z};(\alpha),\mathcal{O}_{K}),\cdots,$ are denoted by \linebreak$\lambda_{2,\chi_{-4}^{K}}(\mathfrak{z};(\alpha),\mathcal{O}_{K},\mathcal{O}_{K},\mathcal{O}_{K})$, $\lambda_{2,\chi_{-4}^{K}}^{\mathbf{1}_{\mathfrak{p}_{2}}}(\mathfrak{z};(\alpha),\mathcal{O}_{K},\mathcal{O}_{K},\mathcal{O}_{K}),\cdots$. We drop the last two ideals from the notation $\lambda_{2,\chi_{-4}^{K}}(\mathfrak{z};(\alpha),\mathcal{O}_{K},\mathcal{O}_{K},\mathcal{O}_{K})$ and so on because they are always $\mathcal{O}_{K}$ in the present paper.

By \cite{Tsuyumine4}, we have $\mathscr{S}_{\alpha,\chi_{-4}^{K}}(\theta(\mathfrak{z})G_{1,\chi_{-4}^{K}}(\mathfrak{z};\mathcal{O}_{K},\mathcal{O}_{K}))=2^{-2}\lambda_{2,\chi_{-4}^{K}}(\mathfrak{z};(\alpha),\mathcal{O}_{K})$, \linebreak$\mathscr{S}_{\alpha,\chi_{-4}^{K}}(\theta(\mathfrak{z})G_{1,\chi_{-4}^{K}}^{\mathbf{1}_{\mathfrak{p}_{2}}}(\mathfrak{z};\mathcal{O}_{K},\mathcal{O}_{K}))=2^{-2}\lambda_{2,\chi_{-4}^{K}}^{\mathbf{1}_{\mathfrak{p}_{2}}}(\mathfrak{z};(\alpha),\mathcal{O}_{K})$, $\mathscr{S}_{\alpha,\chi_{-4}^{K}}(\theta(\mathfrak{z})G_{1,\chi_{-4}^{K}}^{\mathbf{1}_{\mathfrak{p}_{2}}}(\mathfrak{z};(2),$ $\mathcal{O}_{K}))=\lambda_{2,\chi_{-4}^{K}}^{\mathbf{1}_{\mathfrak{p}_{2}}}(\mathfrak{z};(\alpha),(2))$, and $\mathscr{S}_{\alpha,\chi_{-4}^{K}}(\theta(\mathfrak{z})G_{1,\psi}^{\psi'}(\mathfrak{z};\mathcal{O}_{K},\mathcal{O}_{K})=\lambda_{2,\psi}^{\psi'}(\mathfrak{z};(\alpha),\mathcal{O}_{K})$. We can obtain the values at cusps of Eisenstein series on $F$ by (\ref{mvc1}) and (\ref{mvc2}), which are all rational multiples of $L_{F}(0,\widetilde{\chi}_{-4}^{F})$, and hence we obtain the values at cusps of $\lambda_{2,\chi_{-4}^{K}}(\mathfrak{z};(\alpha),\mathcal{O}_{K})$, $\lambda_{2,\chi_{-4}^{K}}^{\mathbf{1}_{\mathfrak{p}_{2}}}(\mathfrak{z};(\alpha),\mathcal{O}_{K})$, $\lambda_{2,\psi}^{\psi'}(\mathfrak{z};(\alpha),\mathcal{O}_{K})$ and $\lambda_{2,\psi}^{\psi'}(\mathfrak{z};(\alpha),\mathcal{O}_{K})$ by using \cite{Tsuyumine4} Lemma 8.2. Then the equation (\ref{eqn:sltt}) gives the values at cusps of $\mathscr{S}_{\alpha,\chi_{-4}^{K}}(\theta(\mathfrak{z})^{3})$ as in Table \ref{tbl:vtacst}.
\begin{table}[htb]
\caption{Values of $L_{F}(0,\widetilde{\chi}_{-4}^{F})^{-1}\mathscr{S}_{\alpha,\chi_{-4}^{K}}(\theta(\mathfrak{z})^{3})$ at cusps in $\mathcal{C}_{0}(2)$.}
$
\begin{array}{c|cccl}
\mathcal{C}_{0}(2)&\tfrac{1}{4\sqrt{3}}&\tfrac{1}{2\pi_{2}\sqrt{3}}&\tfrac{1}{2\sqrt{3}}\\\hline
&3&3&2^{-2}3\cdot7&(\mathrm{A})\\
\smash{\raisebox{-.5em}{$\frac{\mathscr{S}_{\alpha,\chi_{-4}^{K}}(\theta(\mathfrak{z})^{3})}{L_{F}(0,\widetilde{\chi}_{-4}^{F})}$}}&0&0&2^{-2}3^{2}&(\mathrm{B})\\
&2^{-1}3&2^{-1}3&2^{-1}3&(\mathrm{C}_{1})\\
&2^{-1}3&2^{-1}3&2^{-3}3&(\mathrm{C}_{2}),(\mathrm{D})
\end{array}
$
\label{tbl:vtacst}
\end{table}

\section{Sums of three squares in $\mathbf{Q}(\sqrt{3})$}\label{sect:sts}
The Hilbert modular surface of the group $\Gamma_{0}(\mathfrak{d}_{K}^{-1},2\mathfrak{d}_{K})$ is a blown up K3 surface (see van der Geer \cite{Geer} Chap. VII), $\dim\mathbf{S}_{2}(\Gamma_{0}(\mathfrak{d}_{K}^{-1},2\mathfrak{d}_{K}))$ is one dimensional. The cusp form $\Xi(\mathfrak{z})$ of (\ref{eqn:Xi}) is a generator of the space $\mathbf{S}_{2}(\Gamma_{0}(\mathfrak{d}_{K}^{-1},2\mathfrak{d}_{K}))$. By (\ref{mvc1}) and (\ref{mvc2}), and by using $\zeta_{K}(-1)=1/6$ and $L(-1,\chi_{12})=-2$, the the values at cusps, of Eisenstein series $G_{2}(\mathfrak{z};\mathcal{O}_{K},\mathcal{O}_{K})$, $G_{2,\mathbf{1}_{\mathfrak{p}_{2}}}(\mathfrak{z};\mathcal{O}_{K},\mathcal{O}_{K})$, $G_{2,\mathbf{1}_{\mathfrak{p}_{2}}}(\mathfrak{z};\mathfrak{p}_{2},\mathcal{O}_{K})$ are obtained as in Table \ref{tbl:ve2ac}. From the table we see that these three Eisenstein series are linearly independent, and hence $\mathbf{M}_{2}(\Gamma_{0}(\mathfrak{d}_{K}^{-1},2\mathfrak{d}_{K}))=\langle G_{2}(\mathfrak{z};\mathcal{O}_{K},\mathcal{O}_{K}), G_{2,\mathbf{1}_{\mathfrak{p}_{2}}}(\mathfrak{z};\mathcal{O}_{K},\mathcal{O}_{K}), \linebreak G_{2,\mathbf{1}_{\mathfrak{p}_{2}}}(\mathfrak{z};\mathfrak{p}_{2},\mathcal{O}_{K}), \Xi(\mathfrak{z})\rangle$. 
\begin{table}[htb]
\caption{Values at cusps, of Eisenstein series of weight $2$.}
$
\begin{array}{c|ccc}
\mathcal{C}_{0}(2)&\tfrac{1}{4\sqrt{3}}&\tfrac{1}{2\pi_{2}\sqrt{3}}&\tfrac{1}{2\sqrt{3}}\\\hline
G_{2}(\mathfrak{z};\mathcal{O}_{K},\mathcal{O}_{K})&2^{-1}3^{-1}&2^{-1}3^{-1}&2^{-1}3^{-1}\\
G_{2,\mathbf{1}_{\mathfrak{p}_{2}}}(\mathfrak{z};\mathcal{O}_{K},\mathcal{O}_{K})&-2^{-1}3^{-1}&-2^{-1}3^{-1}&2^{-2}3^{-1}\\
G_{2,\mathbf{1}_{\mathfrak{p}_{2}}}(\mathfrak{z};\mathfrak{p}_{2},\mathcal{O}_{K})&-2^{-1}3^{-1}&2^{-2}3^{-1}&2^{-4}3^{-1}
\end{array}
$
\label{tbl:ve2ac}
\end{table}

Let $r_{3,K}(\nu)$ denote the number of representations of $\nu$ as sums of three integer squares in $K$. Then $\theta(\mathfrak{z})^{3}$ is a generating function of $r_{3,K}(\nu)$'s, namely $\theta(\mathfrak{z})^{3}=1+\sum_{\nu\succ0}r_{3,K}(\nu)\mathbf{e}(\mathrm{tr}(\nu\mathfrak{z}))$. Let $\alpha$ be a totally positive square-free integer in $K$. The Shimura lift $\mathscr{S}_{a^{\ast},\chi}(\theta(\mathfrak{z})^{3})$ is in $\mathbf{M}_{2}(\Gamma_{0}(\mathfrak{d}_{K}^{-1},2\mathfrak{d}_{K}))$, and by Table \ref{tbl:vtacst} and Table \ref{tbl:ve2ac}, it is equal to $3^{2}L_{F}(0,\widetilde{\chi}_{-4}^{F})\{3G_{2}(\mathfrak{z};\mathcal{O}_{K},\mathcal{O}_{K})+G_{2,\mathbf{1}_{\mathfrak{p}_{2}}}(\mathfrak{z};\mathcal{O}_{K},\mathcal{O}_{K})\}+c_{\alpha}\Xi(\mathfrak{z})$ in Case $(\mathrm{A})$, $3^{2}L_{F}(0,\widetilde{\chi}_{-4}^{F})\{G_{2}(\mathfrak{z};\mathcal{O}_{K},\mathcal{O}_{K})+G_{2,\mathbf{1}_{\mathfrak{p}_{2}}}(\mathfrak{z};\mathcal{O}_{K},\mathcal{O}_{K})\}+c_{\alpha}\Xi(\mathfrak{z})$ in Case $(\mathrm{B})$, $3^{2}L_{F}(0,\widetilde{\chi}_{-4}^{F})G_{2}(\mathfrak{z};\mathcal{O}_{K},\mathcal{O}_{K})+c_{\alpha}\Xi(\mathfrak{z})$ in Case $(\mathrm{C}_{1})$, and $2^{-1}3^{2}L_{F}(0,\widetilde{\chi}_{-4}^{F})$ $\{G_{2}(\mathfrak{z};\mathcal{O}_{K},\mathcal{O}_{K})-G_{2,\mathbf{1}_{\mathfrak{p}_{2}}}(\mathfrak{z};\mathcal{O}_{K},\mathcal{O}_{K})\}+c_{\alpha}\Xi(\mathfrak{z})$ in Cases $(\mathrm{C}_{2})$ and $(\mathrm{D})$ where $c_{\alpha}$ is a constant depending only on $\alpha$.
\begin{lemma} In all cases, $c_{\alpha}$ is $0$.
\end{lemma}
\begin{proof} We note that the Fourier coefficient of $\Xi(\mathfrak{z})$ for $\nu=1$ is $1$ and that for $\nu=\varepsilon_{0}$ is $-1$ by (\ref{eqn:Xi}). At first we consider Case $(\mathrm{C}_{2})$ or $(\mathrm{D})$. In this case the coefficient of $\sqrt{3}$ in $\alpha$ is odd, and hence $\alpha$ can not be represented as a sum of squares. Comparing the Fourier coefficients of the equality before the lemma, we have $r_{3,K}(\alpha)=c_{\alpha}$ since the Fourier coefficient for $\nu=1$, of $G_{2}(\mathfrak{z};\mathcal{O}_{K},\mathcal{O}_{K})-G_{2,\mathbf{1}_{\mathfrak{p}_{2}}}(\mathfrak{z};\mathcal{O}_{K},\mathcal{O}_{K})$ vanishes. Then $c_{\alpha}=0$.

Next, we consider Case $(\mathrm{A})$. Comparing the Fourier coefficients of the both sides of the equality $\mathscr{S}_{\alpha,\chi_{-4}^{K}}(\theta(\mathfrak{z})^{3})=3^{2}L_{F}(0,\widetilde{\chi}_{-4}^{F})\{3G_{2}(\mathfrak{z};\mathcal{O}_{K},\mathcal{O}_{K})+G_{2,\mathbf{1}_{\mathfrak{p}_{2}}}(\mathfrak{z};\mathcal{O}_{K},\mathcal{O}_{K})\}+c_{\alpha}\Xi(\mathfrak{z})$ for $\nu=1$ and for $\nu=\varepsilon_{0}$, we obtain $r_{3,K}(\alpha)=2^{4}3^{2}L_{F}(0,\widetilde{\chi}_{-4}^{F})+c_{\alpha}$ and $r_{3,K}(\varepsilon_{0}^{2}\alpha)=2^{4}3^{2}L_{F}(0,\widetilde{\chi}_{-4}^{F})-c_{\alpha}$. However obviously the equality $r_{3,K}(\alpha)=r_{3,K}(\varepsilon_{0}^{2}\alpha)$ holds, and hence $c_{\alpha}=0$. The similar argument shows the assertion also in the rest of cases.
\end{proof}
\begin{corollary}\label{cor:ffslt3} The Shimura lift $\mathscr{S}_{\alpha,\chi_{-4}^{K}}(\theta(\mathfrak{z})^{3})$ is equal to 
\begin{alignat*}{2}
&3^{2}L_{F}(0,\widetilde{\chi}_{-4}^{F})\{3G_{2}(\mathfrak{z};\mathcal{O}_{K},\mathcal{O}_{K})+G_{2,\mathbf{1}_{\mathfrak{p}_{2}}}(\mathfrak{z};\mathcal{O}_{K},\mathcal{O}_{K})\}&\hspace{2em}&(\mathrm{A}),\\ 
&3^{2}L_{F}(0,\widetilde{\chi}_{-4}^{F})\{G_{2}(\mathfrak{z};\mathcal{O}_{K},\mathcal{O}_{K})+G_{2,\mathbf{1}_{\mathfrak{p}_{2}}}(\mathfrak{z};\mathcal{O}_{K},\mathcal{O}_{K})\}&&(\mathrm{B}),\\
&3^{2}L_{F}(0,\widetilde{\chi}_{-4}^{F})G_{2}(\mathfrak{z};\mathcal{O}_{K},\mathcal{O}_{K})&&(\mathrm{C}_{1}),\\
 &2^{-1}3^{2}L_{F}(0,\widetilde{\chi}_{-4}^{F})\{G_{2}(\mathfrak{z};\mathcal{O}_{K},\mathcal{O}_{K})-G_{2,\mathbf{1}_{\mathfrak{p}_{2}}}(\mathfrak{z};\mathcal{O}_{K},\mathcal{O}_{K})\}&&(\mathrm{C}_{2}),(\mathrm{D}).
\end{alignat*}
\end{corollary}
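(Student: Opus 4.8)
The plan is to read the corollary off from the displayed identity that immediately precedes the lemma, together with the vanishing $c_\alpha=0$ that the lemma supplies. First I would recall the structure of the target space: the lift $\mathscr{S}_{\alpha,\chi_{-4}^K}(\theta(\mathfrak{z})^3)$ lies in $\mathbf{M}_2(\Gamma_0(\mathfrak{d}_K^{-1},2\mathfrak{d}_K))$, and this space was shown to be spanned by the three weight-$2$ Eisenstein series $G_2(\mathfrak{z};\mathcal{O}_K,\mathcal{O}_K)$, $G_{2,\mathbf{1}_{\mathfrak{p}_2}}(\mathfrak{z};\mathcal{O}_K,\mathcal{O}_K)$, $G_{2,\mathbf{1}_{\mathfrak{p}_2}}(\mathfrak{z};\mathfrak{p}_2,\mathcal{O}_K)$ together with the single cusp form $\Xi(\mathfrak{z})$ generating the one-dimensional space $\mathbf{S}_2(\Gamma_0(\mathfrak{d}_K^{-1},2\mathfrak{d}_K))$.

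Next I would separate the Eisenstein part from the cuspidal part using values at the three cusps of $\mathcal{C}_0(2)$. Because the cusp-value vectors of the three Eisenstein series are linearly independent by Table \ref{tbl:ve2ac}, any element of their span is uniquely recovered from its values at $\tfrac{1}{4\sqrt{3}},\tfrac{1}{2\pi_2\sqrt{3}},\tfrac{1}{2\sqrt{3}}$. Matching these against the cusp values of the Shimura lift recorded in Table \ref{tbl:vtacst}---all rational multiples of $L_F(0,\widetilde{\chi}_{-4}^F)$---and solving the resulting $3\times 3$ linear system case by case produces exactly the Eisenstein combinations appearing in the four displayed expressions. Since $\Xi(\mathfrak{z})$ vanishes at every cusp, this cusp-value matching cannot detect the cuspidal summand, so at this stage the lift is pinned down only up to a term $c_\alpha\Xi(\mathfrak{z})$.

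The hard part is therefore not the corollary itself but the determination of $c_\alpha$, which is precisely the content of the preceding lemma: comparing the Fourier coefficients for $\nu=1$ and $\nu=\varepsilon_0$ and invoking the evident unit-invariance $r_{3,K}(\alpha)=r_{3,K}(\varepsilon_0^2\alpha)$ forces $c_\alpha=0$ in all of Cases $(\mathrm{A})$--$(\mathrm{D})$. Granting this, I would simply substitute $c_\alpha=0$ into the four expressions displayed before the lemma; the residual cuspidal term drops out, leaving exactly the list of Eisenstein-series formulas asserted in the corollary, which completes the proof.
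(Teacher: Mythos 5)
Your proposal is correct and follows the paper's route exactly: the paper likewise writes the lift as the Eisenstein combination (determined by matching cusp values from Table \ref{tbl:vtacst} against Table \ref{tbl:ve2ac}, with $\Xi$ invisible to cusp values) plus $c_{\alpha}\Xi(\mathfrak{z})$, and then the corollary is immediate from the preceding lemma's conclusion $c_{\alpha}=0$. The only minor caveat is that your description of how the lemma is proved matches the paper's argument for Case $(\mathrm{A})$ (and, mutatis mutandis, $(\mathrm{B})$, $(\mathrm{C}_{1})$), whereas for Cases $(\mathrm{C}_{2})$ and $(\mathrm{D})$ the paper instead uses that $\alpha$ there cannot be a sum of squares, so $r_{3,K}(\alpha)=c_{\alpha}=0$; this does not affect the corollary, which only needs the lemma's statement.
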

Since $\mathscr{S}_{\alpha,\chi_{-4}^{K}}(\theta(\mathfrak{z})^{3})=C+\sum_{\nu\in\mathcal{O}_{K},\succ0}\sum_{\delta|\nu,\delta\in\mathcal{O}_{K}/\mathcal{O}_{K}^{\times}}(\psi_{-\alpha}\mathbf{1}_{\mathfrak{p}_{2}})(\delta)r_{3,K}(\alpha(\nu/\delta)^{2})$ $\times\mathbf{e}(\mathrm{tr}(\nu\mathfrak{z}))$ with a constant $C$, comparing the terms corresponding to $\nu$, of the both sides of equations in Corollary \ref{cor:ffslt3}, we have for $\nu\succ0$,
\begin{align}\label{eqn:cfc}
&\sum_{\delta|\nu,\delta/\mathcal{O}_{K}^{\times}}(\psi_{-\alpha}\mathbf{1}_{\mathfrak{p}_{2}})(\delta)r_{3,K}(\alpha(\nu/\delta)^{2})\\
=&2^{2}3^{2}L_{F}(0,\widetilde{\chi}_{-4}^{F})\times\begin{cases}
\{3\sigma_{1}(\nu)+\sigma_{1,\mathbf{1}_{\mathfrak{p}_{2}}}(\nu)\}&(\mathrm{A}),\\
\{\sigma_{1}(\nu)+\sigma_{1,\mathbf{1}_{\mathfrak{p}_{2}}}(\nu)\}&(\mathrm{B}),\\
\sigma_{1}(\nu)&(\mathrm{C}_{1}),\\
2^{-1}\{\sigma_{1}(\nu)-\sigma_{1,\mathbf{1}_{\mathfrak{p}_{2}}}(\nu)\}&(\mathrm{C}_{2}),(\mathrm{D}).
\end{cases}\nonumber
\end{align}By (\ref{eqn:vl}), $L_{F}(0,\widetilde{\chi}_{-4}^{F})=2^{-1}3^{-1}L_{K}(0,\psi_{-\alpha})$, and there holds $L_{K}(0,\psi_{-\alpha})=$\linebreak$\frac{2^{2}}{w_{K(\sqrt{-\alpha})}Q_{K(\sqrt{-\alpha})/K}}h_{K(\sqrt{-\alpha})}$ where $w_{K(\sqrt{-\alpha})}$, $Q_{K(\sqrt{-\alpha})/K}$ and $h_{K(\sqrt{-\alpha})}$ denote the number of units in $K(\sqrt{-\alpha})$, the Hasse unit index and the relative class number respectively. Here the relative class number is just the class number of $K(\sqrt{-\alpha})$ since $K$ is of class number $1$. If square-free $\alpha\succ0$ is not a unit, then $w_{K(\sqrt{-\alpha})}$ is $2$ and $Q_{K(\sqrt{-\alpha})/K}$ is $1$.  For $\alpha=1$, $w_{K(\sqrt{-1})}$ is $12$ and $Q_{K(\sqrt{-1})/K}$ is $2$, and for $\alpha=\varepsilon_{0}$,  $w_{K(\sqrt{-\varepsilon_{0}})}$ is $2$ and $Q_{K(\sqrt{-\varepsilon_{0}})/K}$ is $2$. Thus $L_{F}(0,\widetilde{\chi}_
{-4}^{F})$ is equal to $2^{-2}3^{-2}h_{K(\sqrt{-1})}$ for $\alpha=1$, $2^{-1}3^{-1}h_{K(\sqrt{-\varepsilon_{0}})}$ for $\alpha=\varepsilon_{0}$, and $3^{-1}h_{K(\sqrt{-\alpha})}$ for $\alpha$ non-unit.  If $\nu$ is a product of totally positive prime elements, by using the M\"obius inversion formula on $K$, it is shown that for square-free $\alpha\ne\varepsilon_{0}^{n},\succ0$, $r_{3,K}(\alpha\nu^{2})$ is equal to $2^{2}3h_{K(\sqrt{-\alpha})}$ times the following;
\begin{alignat*}{2}
&\sum_{0\prec\delta|\nu,\delta\in\mathcal{O}_{K}/\mathcal{E}_{\mathcal{O}_{K}}}(\psi_{-\alpha}\mathbf{1}_{\mathfrak{p}_{2}}\mu_{K})(\delta)\{3\sigma_{1}(\nu/\delta)+\sigma_{1,\mathbf{1}_{\mathfrak{p}_{2}}}(\nu/\delta)\}&\hspace{2em}&(\mathrm{A}),\\
&\sum_{0\prec\delta|\nu,\delta\in\mathcal{O}_{K}/\mathcal{E}_{\mathcal{O}_{K}}}(\psi_{-\alpha}\mathbf{1}_{\mathfrak{p}_{2}}\mu_{K})(\delta)\{\sigma_{1}(\nu/\delta)+\sigma_{1,\mathbf{1}_{\mathfrak{p}_{2}}}(\nu/\delta)\}&&(\mathrm{B}),\\
&\sum_{0\prec\delta|\nu,\delta\in\mathcal{O}_{K}/\mathcal{E}_{\mathcal{O}_{K}}}(\psi_{-\alpha}\mathbf{1}_{\mathfrak{p}_{2}}\mu_{K})(\delta)\sigma_{1}(\nu/\delta)&&(\mathrm{C}_{1}),\\
&2^{-1}\sum_{0\prec\delta|\nu,\delta\in\mathcal{O}_{K}/\mathcal{E}_{\mathcal{O}_{K}}}(\psi_{-\alpha}\mathbf{1}_{\mathfrak{p}_{2}}\mu_{K})(\delta)\{\sigma_{1}(\nu/\delta)-\sigma_{1,\mathbf{1}_{\mathfrak{p}_{2}}}(\nu/\delta)\}&&(\mathrm{C}_{2}),(\mathrm{D}),
\end{alignat*}
$\sigma_{1}$ and $\sigma_{1,\mathbf{1}_{\mathfrak{p}_{2}}}$ being as in (\ref{def:sigma1}). Further for $\alpha=1,\varepsilon_{0}$ and for a product $\nu$ of totally positive primes,
\begin{align*}
r_{3,K}(\nu^{2})&=h_{K(\sqrt{-1})}\sum_{0\prec\delta|\nu,\delta\in\mathcal{O}_{K}/\mathcal{E}_{\mathcal{O}_{K}}}(\psi_{-1}\mathbf{1}_{\mathfrak{p}_{2}}\mu_{K})(\delta)\{3\sigma_{1}(\nu/\delta)+\sigma_{1,\mathbf{1}_{\mathfrak{p}_{2}}}(\nu/\delta)\},\\
r_{3,K}(\varepsilon_{0}\nu^{2})&=3h_{K(\sqrt{-\varepsilon_{0}})}\sum_{0\prec\delta|\nu,\delta\in\mathcal{O}_{K}/\mathcal{E}_{\mathcal{O}_{K}}}(\psi_{-\varepsilon_{0}}\mathbf{1}_{\mathfrak{p}_{2}}\mu_{K})(\delta)\{\sigma_{1}(\nu/\delta)-\sigma_{1,\mathbf{1}_{\mathfrak{p}_{2}}}(\nu/\delta)\},
\end{align*}
where $h_{K(\sqrt{-1})}=1$ and $h_{K(\sqrt{-\varepsilon_{0}})}=2$.
\begin{theorem}\label{thm:sts} Let $K=\mathbf{Q}(\sqrt{3})$.

(i) A totally positive integer $a+b\sqrt{3}\ (a,b\in\mathbf{Z})$ is represented as a sum of three integer squares in $K$ if and only if $b$ is even.

(ii) Let $\alpha$ be a totally positive square-free integer in $K$ which is not a unit. We classify $\alpha$ as in Table \ref{tbl:qeK}. Then the class number of $K(\sqrt{-\alpha})$ is given by $2^{-4}3^{-1}r_{3,K}(\alpha)$ in Case $(\mathrm{A})$, $2^{-3}3^{-1}r_{3,K}(\alpha)$ in Case $(\mathrm{B})$, $2^{-2}3^{-1}r_{3,K}(\alpha)$ in Case $(\mathrm{C}_{1})$, and $2^{-2}3^{-2}r_{3,K}(4\alpha)$ in Cases $(\mathrm{C}_{2})$ and $(\mathrm{D})$.
\end{theorem}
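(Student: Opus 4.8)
The plan is to read both parts off the coefficient identity (\ref{eqn:cfc}), together with the evaluation $L_F(0,\widetilde{\chi}_{-4}^F)=3^{-1}h_{K(\sqrt{-\alpha})}$ that holds for non-unit square-free $\alpha\succ0$ by (\ref{eqn:vl}) and the class-number formula. For part (ii) I would evaluate (\ref{eqn:cfc}) at a single $\nu$. For Cases $(\mathrm{A}),(\mathrm{B}),(\mathrm{C}_1)$ take $\nu=1$: the divisor sum on the left has the single term $\delta=1$, so it reduces to $r_{3,K}(\alpha)$, while on the right $\sigma_1(1)=\sigma_{1,\mathbf{1}_{\mathfrak{p}_2}}(1)=1$ makes the bracket equal to $4,2,1$ respectively. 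Inserting $L_F(0,\widetilde{\chi}_{-4}^F)=3^{-1}h_{K(\sqrt{-\alpha})}$ yields $r_{3,K}(\alpha)=2^43h_{K(\sqrt{-\alpha})},\,2^33h_{K(\sqrt{-\alpha})},\,2^23h_{K(\sqrt{-\alpha})}$, i.e. the first three formulas. For Cases $(\mathrm{C}_2),(\mathrm{D})$ the bracket $\sigma_1-\sigma_{1,\mathbf{1}_{\mathfrak{p}_2}}$ vanishes at $\nu=1$, so instead I take $\nu=2$; since $\mathbf{1}_{\mathfrak{p}_2}$ annihilates every divisor of $2$ other than $1$, the left side collapses to $r_{3,K}(4\alpha)$, and with $\sigma_1(2)=7$ and $\sigma_{1,\mathbf{1}_{\mathfrak{p}_2}}(2)=1$ the right side equals $2^23^2h_{K(\sqrt{-\alpha})}$, giving $h_{K(\sqrt{-\alpha})}=2^{-2}3^{-2}r_{3,K}(4\alpha)$.

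For part (i), the forward (``only if'') direction is immediate. As $\mathcal{O}_K=\mathbf{Z}[\sqrt3]$, any square $(c+d\sqrt3)^2=c^2+3d^2+2cd\sqrt3$ has even coefficient of $\sqrt3$, hence so does a sum of three squares; therefore $r_{3,K}(a+b\sqrt3)>0$ forces $b$ to be even.

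The converse is the substantive part: I must show $r_{3,K}(\nu_0)>0$ whenever $\nu_0\succ0$ has even $\sqrt3$-coefficient. Using that $K$ has narrow class number one, I write $\nu_0=\alpha\nu^2$ exactly, with $\alpha$ square-free totally positive and $\nu$ a product of totally positive primes, folding a possible factor $\varepsilon_0$ into $\alpha$; this reduces the question to the explicit formula for $r_{3,K}(\alpha\nu^2)$ stated before the theorem (and its analogues for $\alpha=1,\varepsilon_0$). Because $h_{K(\sqrt{-\alpha})}>0$, positivity of $r_{3,K}(\alpha\nu^2)$ is equivalent to nonvanishing of the attached multiplicative divisor sum $S_\alpha(\nu)$. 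I would evaluate its local factor at each prime $\mathfrak{P}\mid\nu$: the weight $\mu_K$ forces exponent $0$ or $1$, so at an odd prime $\mathfrak{P}$ the factor is a positive multiple of $\sigma_1(\mathfrak{P}^m)-\psi_{-\alpha}(\mathfrak{P})\sigma_1(\mathfrak{P}^{m-1})$, which is positive for each of the values $\psi_{-\alpha}(\mathfrak{P})\in\{1,-1,0\}$.

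The main obstacle, and the only place the parity of $b$ truly enters, is the local analysis at $\mathfrak{p}_2$. In Cases $(\mathrm{A}),(\mathrm{B}),(\mathrm{C}_1)$ the $\mathfrak{p}_2$-factor is again positive, so $r_{3,K}(\alpha\nu^2)>0$ for all admissible $\nu$. In Cases $(\mathrm{C}_2),(\mathrm{D})$ and for $\alpha=\varepsilon_0$ the governing bracket $\sigma_1-\sigma_{1,\mathbf{1}_{\mathfrak{p}_2}}$ vanishes on ideals prime to $\mathfrak{p}_2$ and equals $2(2^e-1)$ on $\mathfrak{p}_2^e$, so $S_\alpha(\nu)\neq0$ exactly when $\pi_2\mid\nu$. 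To close the argument I would verify the congruence expressing the $\sqrt3$-coefficient of $\nu_0=\alpha\nu^2$ as $\equiv b_\alpha\,\mathrm{N}(\nu)\pmod2$, where $b_\alpha$ is the $\sqrt3$-coefficient of $\alpha$ and $\mathrm{N}(\nu)$ is odd precisely when $\pi_2\nmid\nu$. This shows that $\nu_0$ has even $\sqrt3$-coefficient if and only if either $\alpha$ lies in $(\mathrm{A}),(\mathrm{B}),(\mathrm{C}_1)$ or $\pi_2\mid\nu$, matching exactly the condition $S_\alpha(\nu)\neq0$ found above; the remaining care is purely in the $\mathfrak{p}_2$-local bookkeeping and in checking that the unit cases $\alpha=1,\varepsilon_0$ fall on the predicted side.
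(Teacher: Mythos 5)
Your part (ii) is correct and is essentially the paper's argument: evaluating (\ref{eqn:cfc}) at $\nu=1$ (Cases $(\mathrm{A}),(\mathrm{B}),(\mathrm{C}_1)$) and at $\nu=2$ (Cases $(\mathrm{C}_2),(\mathrm{D})$, where $2\sim\pi_2^2$ and $4\alpha\sim\pi_2^4\alpha$) gives exactly the stated constants. The forward direction of (i) and the cases $(\mathrm{A}),(\mathrm{B}),(\mathrm{C}_1)$ are also fine (for the latter it is enough to note $r_{3,K}(\alpha)>0$ and multiply a representation by $\nu^2$).

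The gap is in Cases $(\mathrm{C}_2),(\mathrm{D})$ (and $\alpha=\varepsilon_0$). You argue that $r_{3,K}(\alpha\nu^2)>0$ exactly when $\pi_2\mid\nu$ by computing the $\mathfrak{p}_2$-local factor of the divisor sum $S_\alpha(\nu)$, but the identity $r_{3,K}(\alpha\nu^2)=c\,h_{K(\sqrt{-\alpha})}S_\alpha(\nu)$ is only established for $\nu$ a product of \emph{totally positive} prime elements. In $\mathbf{Q}(\sqrt3)$ every unit has norm $+1$ and $\mathrm{N}(\pi_2)=-2$, so $\mathfrak{p}_2$ has no totally positive generator; consequently the formula cannot be invoked for any $\nu$ with $v_{\mathfrak{p}_2}(\nu)$ odd. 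Concretely, for $\nu_0=\pi_2^2\alpha$ (the first nontrivial target, with ``$\nu$''$=\pi_2$) the quantity $r_{3,K}(\pi_2^2\alpha)$ is not the $\pi_2$-th Fourier coefficient of $\mathscr{S}_{\alpha,\chi_{-4}^K}(\theta^3)$ for the simple reason that $\pi_2$ is not totally positive and no associate of it is, so (\ref{eqn:cfc}) says nothing about it; the paper's Remark after the theorem explicitly flags that the $\mathrm{N}(\nu)<0$ case would require a separate theta-lift argument. The paper closes exactly this hole by a descent: (\ref{eqn:cfc}) at the totally positive index $\pi_2^2$ gives $r_{3,K}(\pi_2^4\alpha)=2^23^2h_{K(\sqrt{-\alpha})}>0$, and then Corollary \ref{cor:stse} (the two-squares analysis) is used to show that in any representation $x_1^2+x_2^2+x_3^2=\pi_2^4\alpha$ the $x_i$ can be taken all divisible by $\pi_2$, whence $r_{3,K}(\pi_2^2\alpha)>0$. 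Your proposal is missing this step (or some substitute for it), and without it the ``if'' direction fails precisely at the integers $a+b\sqrt3$ whose square-free part lies in $(\mathrm{C}_2)$, $(\mathrm{D})$ or is $\varepsilon_0$ and whose $\mathfrak{p}_2$-valuation is $\equiv2\pmod4$.
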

\begin{proof}(i) The necessity is obvious, and only the sufficiency is to be proved. We can write as $a+b\sqrt{3}=\alpha\nu^{2}$ for $\alpha$ square-free. If $\alpha$ satisfies $(\mathrm{A}),(\mathrm{B})$ or $(\mathrm{C}_{1})$, then $r_{3,K}(\alpha)>0$ by the formulas before Theorem. Then $r_{3,K}(\alpha\nu^{2})>0$ for any $\nu\in\mathcal{O}_{K}$. Suppose that $\alpha$ satisfies $(\mathrm{C}_{2})$ or $(\mathrm{D})$. If $\nu$ is odd, then $\nu^{2}\equiv1\pmod{2}$ and $b$ is odd, which contradicts to our assumption. Hence $\nu$ must be in $\mathfrak{p}_{2}$, and $\alpha\nu^{2}$ is a product of $\pi_{2}^{2}\alpha$ and some integer square. To prove $r_{3,K}(\alpha\nu^{2})>0$, it is enough to show $r_{3,K}(\pi_{2}^{2}\alpha)>0$. 
By (\ref{eqn:cfc}) in Cases ($\mathrm{C}_{2}$) and ($\mathrm{D}$), there holds
\begin{align}\label{eqn:r3pi4}
r_{3,K}(\pi_{2}^{4}\alpha)=2^{2}3^{2}h_{K(\sqrt{-\alpha})}>0,
\end{align}
and there are $x_{1},x_{2},x_{3}\in\mathcal{O}_{K}$ satisfying $x_{1}^{2}+x_{2}^{2}+x_{3}^{2}=\pi_{2}^{4}\alpha$. At least one of $x_{i}$'s is even, say $x_{3}$. If $v_{\mathfrak{p}_{2}}(x_{3})=1$, then putting $x_{3}=\pi_{2}x_{3}'$, there holds $x_{1}^{2}+x_{2}^{2}=\pi_{2}^{2}(\pi_{2}^{2}\alpha-x_{3}'^{2})$ with $\rho_{2}(\pi_{2}^{2}\alpha-x_{3}'^{2})=1$. Then by Corollary \ref{cor:stse} (i),  both of $x_{1},x_{2}$ are even, and hence $r_{3,K}(\pi_{2}^{2}\alpha)>0$. If $x_{3}=0$ or $v_{\mathfrak{p}_{2}}(x_{3})\ge2$, then there are even $x_{1},x_{2}$ by Corollary \ref{cor:stse} (ii), and hence $r_{3,K}(\pi_{2}^{2}\alpha)>0$.

The assertion (ii) is obtained from formulas before the theorem and from (\ref{eqn:r3pi4}) since $\pi_{2}^{4}=4\varepsilon_{0}^{2}$.
\end{proof}
\begin{remark} A formula for the left hand side of (\ref{eqn:cfc}) in case $\mathrm{N}(\nu)<0$ is obtained by using the theta lifts of $\theta_{\mathfrak{d}_{K}^{-1}}(\mathfrak{z})=\sum_{\mu\in\mathfrak{d}_{K}^{-1}}\mathbf{e}(\mathrm{tr}(\mu^{2}\mathfrak{z}))$. We omit the argument because it is not necessary to show Theorem \ref{thm:sts}.
\end{remark}

\section{Hilbert modular forms on $\mathbf{Q}(\sqrt{17})$}\label{sect:hm17}
Let $K=\mathbf{Q}(\sqrt{17})$. Then $\mathfrak{d}_{K}=(\sqrt{17})$, and the class number of $K$ is $1$. We put $\omega:=(1+\sqrt{17})/2$ and $\omega':=(1-\sqrt{17})/2$. Let $\varepsilon_{0}:=3+2\omega$ be a fundamental unit, which has a negative norm. Put $\pi_{2}:=2+\omega$, $\mathfrak{p}_{2}:=(\pi_{2})$, $\pi_{2}':=2+\omega'$, $\mathfrak{p}_{2}':=(\pi_{2}')$. Then the ideal $(2)$ is decomposed as $(2)=\mathfrak{p}_{2}\mathfrak{p}_{2}'$ in $K$. Let $\chi_{-4}^{K}:=\chi_{-4}\circ\mathrm{N}\in C_{(4)}^{\ast}$, whose conductor $\mathfrak{f}_{\chi_{-4}^{K}}$ is $(4)$. Let $\rho_{2}$ be the unique nontrivial ideal class character in $C_{\mathfrak{p}_{2}^{2}}^{\ast}$, and let $\rho_{2}'$ be the unique nontrivial ideal class character in $C_{\mathfrak{p}_{2}'^{2}}^{\ast}$. Then $\mathfrak{f}_{\rho_{2}}=\mathfrak{p}_{2}^{2},\mathfrak{f}_{\rho_{2}}=\mathfrak{p}_{2}'^{2}$ and
\begin{align*}
\chi_{-4}^{K}=\rho_{2}\rho_{2}'.
\end{align*}
For $a,b\in\mathbf{Z}$, we have $\rho_{2}((a+b\omega))=\mathrm{sgn}(a+b\omega')\chi_{-4}(a),\rho_{2}'((a+b\omega))=\mathrm{sgn}(a+b\omega)\chi_{-4}(a+b)$, and $\mathbf{e}_{\rho_{2}}=(0,1),\ \mathbf{e}_{\rho_{2}'}=(1,0)$. Further $\rho_{2}(\mathfrak{p}_{2}')=-1,\rho_{2}'(\mathfrak{p}_{2})=-1$, $\tau_{K}(\chi_{-4}^{K})=-4,\tau_{K}(\rho_{2})=\tau_{K}(\rho_{2}')=2\sqrt{-1}$, $L_{K}(0,\chi_{-4}^{K})=2$.

The set $\mathcal{C}_{0}(4)$ of representatives of  cusps of $\Gamma_{0}(\mathfrak{d}_{K}^{-1},4\mathfrak{d}_{K})$ is taken as
\begin{align*}
\mathcal{C}_{0}(4)=\{\tfrac{1}{4\sqrt{17}},\tfrac{1}{2\pi_{2}'\sqrt{17}},\tfrac{1}{2\pi_{2}\sqrt{17}},\tfrac{1}{2\sqrt{17}},\tfrac{1}{\pi_{2}'^{2}\sqrt{17}},\tfrac{1}{\pi_{2}^{2}\sqrt{17}},\tfrac{1}{\pi_{2}'\sqrt{17}},\tfrac{1}{\pi_{2}\sqrt{17}},\tfrac{1}{\sqrt{17}}\},
\end{align*}
and the set $\mathcal{C}_{0}(2)$ of representatives of  cusps of $\Gamma_{0}(\mathfrak{d}_{K}^{-1},2\mathfrak{d}_{K})$ is taken as
\begin{align*}
\mathcal{C}_{0}(2)=\{\tfrac{1}{2\sqrt{17}},\tfrac{1}{\pi_{2}'\sqrt{17}},\tfrac{1}{\pi_{2}\sqrt{17}},\tfrac{1}{\sqrt{17}}\}.
\end{align*}
The square $\theta(\mathfrak{z})^{2}$ of the theta series is a Hilbert modular form for  $\Gamma_{0}(\mathfrak{d}_{K}^{-1},4\mathfrak{d}_{K})$  of weight $1$ with character $\chi_{-4}^{K}$, and it takes the values at cusps as in Table \ref{tbl:vtac17}.
\begin{table}[htb]
\caption{Values of $\theta(\mathfrak{z})^{2}$ at cusps.}
$\begin{array}{c|c@{\ }c@{\ }c@{\ }c@{\ }c@{\ }c@{\ }c@{\ }c@{\ }c}
\mathcal{C}_{0}(4)&\tfrac{1}{4\sqrt{17}}&\tfrac{1}{2\pi_{2}'\sqrt{17}}&\tfrac{1}{2\pi_{2}\sqrt{17}}&\tfrac{1}{2\sqrt{17}}&\tfrac{1}{\pi_{2}'^{2}\sqrt{17}}&\tfrac{1}{\pi_{2}^{2}\sqrt{17}}&\tfrac{1}{\pi_{2}'\sqrt{17}}&\tfrac{1}{\pi_{2}\sqrt{17}}&\tfrac{1}{\sqrt{17}}\\\hline
\theta(\mathfrak{z})^{2}&1&0&0&0&\mbox{\small$2^{-1}\sqrt{-1}$}&\mbox{\small$-2^{-1}\sqrt{-1}$}&0&0&2^{-2}
\end{array}
$\label{tbl:vtac17}
\end{table}

By (\ref{mvc1}) and (\ref{mvc2}), and by $L_{K}(0,\chi_{-4}^{K})=2$, the values at cusps, of Eisenstein series $G_{1,\chi_{-4}^{K}}(\mathfrak{z};\mathcal{O}_{K},\mathcal{O}_{K}),G_{1,\rho_{2}}^{\rho_{2}'}(\mathfrak{z};\mathcal{O}_{K},\mathcal{O}_{K})\in\mathbf{M}_{1}(\Gamma_{0}(\mathfrak{d}_{K}^{-1},4\mathfrak{d}_{K}),\chi_{-4}^{K})$ are obtained as in Table \ref{tbl:ve1ac17}.
\begin{table}[htb]
\caption{Values at cusps, of Eisenstein series of weight $1$.}
$\begin{array}{c|c@{\ }c@{\ }c@{\ }c@{\ }c@{\ }c@{\ }c@{\ }c@{\ }c}
\mathcal{C}_{0}(4)&\tfrac{1}{4\sqrt{17}}&\tfrac{1}{2\pi_{2}'\sqrt{17}}&\tfrac{1}{2\pi_{2}\sqrt{17}}&\tfrac{1}{2\sqrt{17}}&\tfrac{1}{\pi_{2}'^{2}\sqrt{17}}&\tfrac{1}{\pi_{2}^{2}\sqrt{17}}&\tfrac{1}{\pi_{2}'\sqrt{17}}&\tfrac{1}{\pi_{2}\sqrt{17}}&\tfrac{1}{\sqrt{17}}\\\hline
G_{1,\chi_{-4}^{K}}(\mathfrak{z};\mathcal{O}_{K},\mathcal{O}_{K})&2&0&0&0&0&0&0&0&2^{-1}\\
G_{1,\rho_{2}}^{\rho_{2}'}(\mathfrak{z};\mathcal{O}_{K},\mathcal{O}_{K})&0&0&0&0&\sqrt{-1}&-\sqrt{-1}&0&0&0
\end{array}
$\label{tbl:ve1ac17}
\end{table}

The product of these Eisenstein series is a cusp form of weight $2$, since it vanishes at all the cusps. Put $\Phi(\mathfrak{z}):=2^{-3}G_{1,\chi_{-4}^{K}}(\mathfrak{z};\mathcal{O}_{K},\mathcal{O}_{K})G_{1,\rho_{2}}^{\rho_{2}'}(\mathfrak{z};\mathcal{O}_{K},\mathcal{O}_{K})$, and put $\Xi(\mathfrak{z}):=3^{-1}U(2)(\Phi(\mathfrak{z}))$ which is in $\mathbf{S}_{2}(\Gamma_{0}(\mathfrak{d}_{K}^{-1},2\mathfrak{d}_{K}))$. The first several Fourier coefficients of $\Phi(\mathfrak{z}),\Xi(\mathfrak{z}),$ $U(\pi_{2})(\Phi(\mathfrak{z})),U(\pi_{2}')(\Phi(\mathfrak{z})),\Xi(2\mathfrak{z})$ are as in Table \ref{tbl:vcac17}, where  for example we read the second row as $\Phi(\mathfrak{z})=\mathbf{e}(\mathrm{tr}(\mathfrak{z}))+3\mathbf{e}(\mathrm{tr}(2\mathfrak{z}))-\mathbf{e}(\mathrm{tr}(\pi_{2}'\mathfrak{z}))-\mathbf{e}(\mathrm{tr}(\pi_{2}\mathfrak{z}))+2\mathbf{e}(\mathrm{tr}(3\mathfrak{z}))-\mathbf{e}(\mathrm{tr}((3{+}\omega')\mathfrak{z}))-\mathbf{e}(\mathrm{tr}((3{+}\omega)\mathfrak{z}))+3\mathbf{e}(\mathrm{tr}(4\mathfrak{z}))+\cdots$.
\begin{table}[htb]
\caption{Fourier coefficients of cusp forms of weight $2$.}
$\begin{array}{c||ccccccccccccc}
&1&2&\pi_{2}'&\pi_{2}&3&3+\omega'&3+\omega&4\\\hline
\Phi(\mathfrak{z})&1&3&-1&-1&2&-1&-1&3\\
\Xi(\mathfrak{z})&1&1&-1&-1&-2&-1&-1&1\\
U(\pi_{2})(\Phi(\mathfrak{z}))&-1&-3&3&1&2&1&3&-3\\
U(\pi_{2}')(\Phi(\mathfrak{z}))&-1&-3&1&3&2&3&1&-3\\
\Xi(2\mathfrak{z})&0&1&0&0&0&0&0&1\\
\end{array}
$\label{tbl:vcac17}
\end{table}

The group $\Gamma_{0}(\mathfrak{d}_{K}^{-1},4\mathfrak{d}_{K})$ acts freely on $\mathfrak{H}^{2}$, and hence the arithmetic genus of a nonsingular model of the compactified Hilbert modular surface for $\Gamma_{0}(\mathfrak{d}_{K}^{-1},4\mathfrak{d}_{K})$ is obtained from the volume of the fundamental domain and from the contributions from cusps (van der Geer \cite{Geer}). It is computed to be $6$, and hence $\dim\mathbf{S}_{2}(\Gamma_{0}(\mathfrak{d}_{K}^{-1},4\mathfrak{d}_{K}))=5$. We see that cusp forms in Table \ref{tbl:vcac17} are linearly independent only by looking the first five Fourier coefficients. This shows that $\mathbf{S}_{2}(\Gamma_{0}(\mathfrak{d}_{K}^{-1},4\mathfrak{d}_{K}))=\langle\Phi(\mathfrak{z}),$ $\Xi(\mathfrak{z}),$ $U(\pi_{2})(\Phi(\mathfrak{z})),$ $U(\pi_{2}')(\Phi(\mathfrak{z})),$ $\Xi(2\mathfrak{z})\rangle$, and that a cusp form in $\mathbf{S}_{2}(\Gamma_{0}(\mathfrak{d}_{K}^{-1},4\mathfrak{d}_{K}))$ vanishes identically if its first five Fourier coefficients are all $0$.
\begin{lemma}The following equality holds;
\begin{align}
\theta(\mathfrak{z})^{2}&=2^{-1}G_{1,\chi_{-4}^{K}}(\mathfrak{z};\mathcal{O}_{K},\mathcal{O}_{K})+2^{-1}G_{1,\rho}^{\rho'}(\mathfrak{z};\mathcal{O}_{K},\mathcal{O}_{K}).\label{eqn:sqearetheta17}
\end{align}
\end{lemma}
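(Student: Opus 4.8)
The plan is to imitate the proof of the weight‑one lemma for $\mathbf{Q}(\sqrt{3})$: form the difference $f$ of the two sides, check it is a cusp form, and then kill it by passing to the five‑dimensional space $\mathbf{S}_{2}(\Gamma_{0}(\mathfrak{d}_{K}^{-1},4\mathfrak{d}_{K}))$, where Fourier coefficients are under control. Put
\[ f(\mathfrak{z}):=\theta(\mathfrak{z})^{2}-2^{-1}G_{1,\chi_{-4}^{K}}(\mathfrak{z};\mathcal{O}_{K},\mathcal{O}_{K})-2^{-1}G_{1,\rho_{2}}^{\rho_{2}'}(\mathfrak{z};\mathcal{O}_{K},\mathcal{O}_{K}), \]
a Hilbert modular form of weight $1$ with character $\chi_{-4}^{K}$ for $\Gamma_{0}(\mathfrak{d}_{K}^{-1},4\mathfrak{d}_{K})$ (here $G_{1,\rho_{2}}^{\rho_{2}'}$ is the form written $G_{1,\rho}^{\rho'}$ in (\ref{eqn:sqearetheta17})). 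First I would read off from Tables \ref{tbl:vtac17} and \ref{tbl:ve1ac17} that $f$ vanishes at every cusp of $\mathcal{C}_{0}(4)$: at $\tfrac{1}{4\sqrt{17}}$ and $\tfrac{1}{\sqrt{17}}$ the term $2^{-1}G_{1,\chi_{-4}^{K}}$ matches $\theta(\mathfrak{z})^{2}$ while $G_{1,\rho_{2}}^{\rho_{2}'}$ is $0$; at $\tfrac{1}{\pi_{2}'^{2}\sqrt{17}}$ and $\tfrac{1}{\pi_{2}^{2}\sqrt{17}}$ the term $2^{-1}G_{1,\rho_{2}}^{\rho_{2}'}$ absorbs the values $\pm 2^{-1}\sqrt{-1}$ of $\theta(\mathfrak{z})^{2}$; and at the remaining five cusps all three forms vanish. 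Hence $f$ is a cusp form.

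Next I would evaluate the lowest Fourier coefficients of $f$. The $\nu$‑th coefficient of $\theta(\mathfrak{z})^{2}$ is $r_{2,K}(\nu)$, while those of the two Eisenstein series are $2^{g}\sigma_{0,\chi_{-4}^{K}}(\nu)$ and $2^{g}\sigma_{0,\rho_{2}}^{\rho_{2}'}(\nu)$ by (\ref{eqn:e-s}) and (\ref{def:sigma1}), $\chi_{-4}^{K},\rho_{2},\rho_{2}'$ all being Hecke characters for the class‑number‑one field $K$. For $\nu=1$ one has $r_{2,K}(1)=4$ and $\sigma_{0,\chi_{-4}^{K}}(1)=\sigma_{0,\rho_{2}}^{\rho_{2}'}(1)=1$, so the $\nu=1$ coefficient of $f$ is $4-2^{-1}(4+4)=0$. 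Writing $c_{\nu}$ for the coefficients of $f$, the essential point is $c_{1}=0$.

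To conclude $f\equiv 0$ I would square. Since $\chi_{-4}^{K}$ is quadratic, $f(\mathfrak{z})^{2}$ has weight $2$ and trivial character and is cuspidal, so it lies in $\mathbf{S}_{2}(\Gamma_{0}(\mathfrak{d}_{K}^{-1},4\mathfrak{d}_{K}))$. The arithmetic input is that, among the five arguments $1,2,\pi_{2}',\pi_{2},3$ of Table \ref{tbl:vcac17}, the only splittings $\lambda=\nu+\mu$ with $\nu,\mu\succ 0$ are $2=1+1$ and $3=1+2=2+1$: there is no nontrivial splitting of $1$, and none of $\pi_{2}'$ or $\pi_{2}$, because a totally positive integer has norm at least $1$ while $\mathrm{N}(\pi_{2})=\mathrm{N}(\pi_{2}')=2$ forces one conjugate of a summand to be too small. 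Hence the coefficients of $f^{2}$ at $1,2,\pi_{2}',\pi_{2},3$ are $0,\ c_{1}^{2},\ 0,\ 0,\ 2c_{1}c_{2}$, all of which vanish as $c_{1}=0$. A form in $\mathbf{S}_{2}(\Gamma_{0}(\mathfrak{d}_{K}^{-1},4\mathfrak{d}_{K}))$ whose first five Fourier coefficients vanish is identically zero, so $f^{2}\equiv 0$; and the holomorphic functions on the connected domain $\mathfrak{H}^{2}$ form an integral domain, whence $f\equiv 0$, which is (\ref{eqn:sqearetheta17}).

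I expect the main obstacle to be computational rather than structural. The delicate points are the correct evaluation of the Gauss‑sum and $L$‑value normalisations hidden in the cusp values of Table \ref{tbl:ve1ac17} (so that the cancellations at the ramified cusps are genuinely exact), and the bookkeeping that produces $c_{1}=0$. If one prefers not to rely on a single coefficient, the same conclusion follows by multiplying $f$ by $G_{1,\chi_{-4}^{K}}(\mathfrak{z};\mathcal{O}_{K},\mathcal{O}_{K})$ rather than by itself: the product again lies in $\mathbf{S}_{2}(\Gamma_{0}(\mathfrak{d}_{K}^{-1},4\mathfrak{d}_{K}))$, and one then needs $c_{\nu}=0$ for all five $\nu\in\{1,2,\pi_{2}',\pi_{2},3\}$ together with the easily checked fact that no totally positive integer is strictly smaller than $\pi_{2}'$ or $\pi_{2}$ (any such element would have norm $1$, hence be a totally positive unit, and no power of $\varepsilon_{0}^{2}$ has both conjugates in the required range), so that the constant term of $G_{1,\chi_{-4}^{K}}$ introduces no unverified coefficient of $f$.
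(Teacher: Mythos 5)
Your proposal is correct and follows essentially the same route as the paper: form the difference $f$, verify cuspidality from Tables \ref{tbl:vtac17} and \ref{tbl:ve1ac17}, and pass to $f^{2}\in\mathbf{S}_{2}(\Gamma_{0}(\mathfrak{d}_{K}^{-1},4\mathfrak{d}_{K}))$, whose first five Fourier coefficients vanish and hence $f^{2}=0$, so $f=0$. The only (harmless) difference is that you observe, via the explicit list of decompositions $\lambda=\nu+\mu$ with $\nu,\mu\succ0$ for $\lambda\in\{1,2,\pi_{2}',\pi_{2},3\}$, that $c_{1}=0$ alone suffices, whereas the paper checks the first four coefficients of $f$.
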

\begin{proof} Let $f$ be a Hilbert modular form given as the left hand side minus the right hand side of the equation (\ref{eqn:sqearetheta17}). It is a cusp form from Table \ref{tbl:vtac17} and from Table \ref{tbl:ve1ac17}. As easily checked, the first four Fourier coefficients of $f$ are all $0$, and hence so are the first five Fourier coefficients of $f^{2}\in\mathbf{S}_{2}(\Gamma_{0}(\mathfrak{d}_{K}^{-1},4\mathfrak{d}_{K}))$. Then $f^{2}=0$, and $f=0$.
\end{proof}
\begin{corollary} \label{cor:stws17} For $\nu\in\mathcal{O}_{K},\succ0$, there holds
\begin{align*}
r_{2,K}(\nu)=2\sigma_{0,\chi_{-4}^{K}}(\nu)+2\sigma_{0,\rho_{2}}^{\rho_{2}'}(\nu).
\end{align*}
\end{corollary}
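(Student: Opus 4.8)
The plan is to equate the $\nu$-th Fourier coefficients on the two sides of the identity (\ref{eqn:sqearetheta17}) of the preceding lemma, for each totally positive $\nu\in\mathcal{O}_K$. Since that lemma already supplies the substantive content—namely that $\theta(\mathfrak{z})^2$ is the displayed linear combination of the two weight-one Eisenstein series—the corollary reduces to a direct comparison of coefficients and requires no further input from the theory of modular forms.

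On the left-hand side, expanding $\theta(\mathfrak{z})^2=\sum_{\mu_1,\mu_2\in\mathcal{O}_K}\mathbf{e}(\mathrm{tr}((\mu_1^2+\mu_2^2)\mathfrak{z}))$ shows that the coefficient of $\mathbf{e}(\mathrm{tr}(\nu\mathfrak{z}))$ is the number of pairs $(\mu_1,\mu_2)$ with $\mu_1^2+\mu_2^2=\nu$, that is $r_{2,K}(\nu)$. On the right-hand side I would invoke the Fourier expansion (\ref{eqn:e-s}), specialized to $K=\mathbf{Q}(\sqrt{17})$ with $g=2$ and $k=1$. For both $G_{1,\chi_{-4}^K}(\mathfrak{z};\mathcal{O}_K,\mathcal{O}_K)$ and $G_{1,\rho_2}^{\rho_2'}(\mathfrak{z};\mathcal{O}_K,\mathcal{O}_K)$ the index ideal $\mathfrak{N}^{-1}\mathfrak{e}_\psi\mathfrak{N}'^{-1}\mathfrak{e}_{\psi'}$ occurring in (\ref{eqn:e-s}) collapses to $\mathcal{O}_K$: for the first because $\mathfrak{e}_{\chi_{-4}^K}=\mathfrak{f}_{\chi_{-4}^K}=(4)$, and for the second because $\mathfrak{e}_{\rho_2}=\mathfrak{p}_2^2$ and $\mathfrak{e}_{\rho_2'}=\mathfrak{p}_2'^2$, so that the $\nu$-th summands become $\sigma_{0,\chi_{-4}^K}(\nu)$ and $\sigma_{0,\rho_2}^{\rho_2'}(\nu)$ respectively. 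Here I would use the fact recorded at the start of Section \ref{sect:hm17} that $\rho_2$ and $\rho_2'$ are genuine (narrow) ideal class characters, so that the divisor-sum expression (\ref{def:sigma1}) applies directly with trivial $\omega,\omega'$. Multiplying the prefactor $2^g=4$ of (\ref{eqn:e-s}) by the scalar $2^{-1}$ appearing in (\ref{eqn:sqearetheta17}) then yields the coefficients $2\sigma_{0,\chi_{-4}^K}(\nu)$ and $2\sigma_{0,\rho_2}^{\rho_2'}(\nu)$, and equating the total with $r_{2,K}(\nu)$ gives the claimed formula for every $\nu\succ0$.

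I do not expect any genuine obstacle here. The one point meriting care is the bookkeeping that identifies the two index ideals in (\ref{eqn:e-s}) with $\mathcal{O}_K$, which is a mechanical computation from the conductors and the $\mathfrak{e}$-notation collected at the beginning of Section \ref{sect:hm17}. As a consistency check one may also match constant terms, verifying $r_{2,K}(0)=1$ against $2^{-1}$ times the sum of the two Eisenstein-series constant terms furnished by the constant-term prescription following (\ref{eqn:e-s}); this lies outside the stated claim, which concerns only $\nu\succ0$.
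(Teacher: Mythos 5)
Your proposal is correct and is exactly the paper's (implicit) argument: the corollary is obtained by comparing the $\nu$-th Fourier coefficients of both sides of (\ref{eqn:sqearetheta17}), with the left side giving $r_{2,K}(\nu)$ and the right side giving $2^{-1}\cdot 2^{g}=2$ times each divisor sum via (\ref{eqn:e-s}) and (\ref{def:sigma1}). Your bookkeeping of the index ideals $\mathfrak{N}^{-1}\mathfrak{e}_{\psi}\mathfrak{N}'^{-1}\mathfrak{e}_{\psi'}=\mathcal{O}_{K}$ and the observation that $\rho_{2},\rho_{2}'$ are genuine ideal class characters here (unlike in the $\mathbf{Q}(\sqrt{3})$ case) are both accurate.
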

In considering a prime factorization in $K$, we may assume that prime elements are totally positive since a fundamental unit has a negative norm. Let $p_{i}\in K$ be a totally positive prime element congruent to $1\pmod{4}$, $q_{j}\in K$ be a totally positive prime element congruent to $3\pmod{4}$, and let $r_{k}\in K$ be a totally positive prime element congruent to $\sqrt{17}$ or to $2+\sqrt{17}$ modulo $4$. Then a totally positive $\nu$ has a prime factorization $\nu=\varepsilon_{0}^{2l}\pi_{2}^{e}\pi_{2}'^{e'}p_{1}^{a_{1}}\cdots$ $p_{r}^{a_{r}}q_{1}^{b_{1}}\cdots q_{t}^{b_{t}}r_{1}^{c_{1}}\cdots r_{u}^{c_{u}}$. Then by Corollary \ref{cor:stws17}, it is expressed as a sum of two integer squares if and only if $e+e'+\sum_{j}b_{j}\equiv c_{1}\equiv\cdots\equiv c_{u}\pmod{2}$. The number of representations is given by $4\prod_{i}(1+a_{i})\prod_{j}(1+b_{j})$.

\section{Quadratic extensions of $\mathbf{Q}(\sqrt{17})$}
Let $\alpha\in\mathcal{O}_{K}$ be square-free and not necessarily totally positive, and let $F=K(\sqrt{\alpha})$. We denote by $\psi_{\alpha},\mathfrak{f}_{\alpha}$, the character associated the extension, the conductor of the extension. Let $\chi_{-4}^{F}:=\chi_{-4}\circ\mathrm{N}_{F}$. We classify quadratic extensions by the congruence conditions on $\alpha$ modulo some powers of $\mathfrak{p}_{2}$ and modulo some powers of $\mathfrak{p}_{2}'$ as in Table \ref{tbl:qeK17}.
\begin{table}[htb]
\caption{Quadratic extensions $F=\mathbf{Q}(\sqrt{17},\sqrt{\alpha})$ of $\mathbf{Q}(\sqrt{17})$.}
$\begin{array}{c@{\ }c@{\ }cc}
&\alpha\equiv&v_{\mathfrak{p}_{2}}(\mathfrak{f}_{\alpha})&\mathfrak{p}_{2}\mbox{ at }F\\\hline
(\mathrm{A})&1\ (\bmod\,\mathfrak{p}_{2}^{3})&0&\mbox{split}\\
(\mathrm{B})&5\ (\bmod\,\mathfrak{p}_{2}^{3})&0&\mbox{inert}\\
(\mathrm{C}_{\mathrm{A}})&7\ (\bmod\,\mathfrak{p}_{2}^{3})&2&\mbox{ramify}\\
(\mathrm{C}_{\mathrm{B}})&3\ (\bmod\,\mathfrak{p}_{2}^{3})&2&\mbox{ramify}\\
(\mathrm{D})&2\ (\bmod\,\mathfrak{p}_{2}^{2})&3&\mbox{ramify}\\\hline
\end{array}\hspace{1em}
\begin{array}{c@{\ }c@{\ }cc}
&\alpha\equiv&v_{\mathfrak{p}_{2}'}(\mathfrak{f}_{\alpha})&\mathfrak{p}_{2}'\mbox{ at }F\\\hline
(\mathrm{A}')&1\ (\bmod\,\mathfrak{p}_{2}'^{3})&0&\mbox{split}\\
(\mathrm{B}')&5\ (\bmod\,\mathfrak{p}_{2}'^{3})&0&\mbox{inert}\\
(\mathrm{C}_{\mathrm{A}'}')&7\ (\bmod\,\mathfrak{p}_{2}'^{3})&2&\mbox{ramify}\\
(\mathrm{C}_{\mathrm{B}'}')&3\ (\bmod\,\mathfrak{p}_{2}'^{3})&2&\mbox{ramify}\\
(\mathrm{D}')&2\ (\bmod\,\mathfrak{p}_{2}'^{2})&3&\mbox{ramify}\\\hline
\end{array}$\label{tbl:qeK17}
\end{table}

If $\alpha$ satisfies both $(\mathrm{A})$ and $(\mathrm{A}')$, then we say that $\alpha$ is in Case $(\mathrm{A}\mathrm{A}')$. It is similar for $(\mathrm{A}\mathrm{B}')$, $(\mathrm{A}\mathrm{C}_{\mathrm{A}'}')$ and so on.

Let $\rho_{2},\rho_{2}'$ be as in  the preceding section. Put $\rho_{2}^{F}:=\rho_{2}\circ\mathrm{N}_{F/K}, \rho_{2}'^{F}:=\rho_{2}'\circ\mathrm{N}_{F/K}$. Then since $\chi_{-4}^{K}=\rho_{2}\rho_{2}'$, we have $\chi_{-4}^{F}=\rho_{2}^{F}\rho_{2}'^{F}$ and $\widetilde{\chi}_{-4}^{F}=\widetilde{\rho}_{2}^{F}\widetilde{\rho}_{2}'^{F}$. The conductors of $\rho_{2}^{F},\rho_{2}'^{F}$ and their values of Gauss sums are given as in Table \ref{tbl:c-g17} where $\mathfrak{P}_{2},\mathfrak{P}_{2}'$ denote prime ideals in $F$ with $\mathfrak{P}_{2}^{2}=\mathfrak{p}_{2},\mathfrak{P}_{2}'^{2}=\mathfrak{p}_{2}'$ respectively.
\begin{table}[htb]
\caption{Conductors and Gauss sums of $\rho_{2}^{F},\rho_{2}'^{F}$.}
$\begin{array}{cccc}
&\mathfrak{f}_{\rho_{2}^{F}}&\widetilde{\rho}_{2}^{F}(\mathfrak{P}_{2})&\tau_{F}(\widetilde{\rho}_{2}^{F})\\\hline
(\mathrm{A})&\mathfrak{p}_{2}^{2}\mathcal{O}_{F}&\mbox{---}&-4\\
(\mathrm{B})&\mathfrak{p}_{2}^{2}\mathcal{O}_{F}&\mbox{---}&-4\\
(\mathrm{C}_{\mathrm{A}})&\mathcal{O}_{F}&-1&-1\\
(\mathrm{C}_{\mathrm{B}})&\mathcal{O}_{F}&1&-1\\
(\mathrm{D})&\mathfrak{p}_{2}\mathcal{O}_{F}&0&-2\\\hline
\end{array}
\hspace{1em}
\begin{array}{cccc}
&\mathfrak{f}_{\rho_{2}'^{F}}&\widetilde{\rho}_{2}'^{F}(\mathfrak{P}_{2}')&\tau_{F}(\widetilde{\rho}_{2}'^{F})\\\hline
(\mathrm{A'})&\mathfrak{p}_{2}'^{2}\mathcal{O}_{F}&\mbox{---}&-4\\
(\mathrm{B'})&\mathfrak{p}_{2}'^{2}\mathcal{O}_{F}&\mbox{---}&-4\\
(\mathrm{C}_{\mathrm{A}'}')&\mathcal{O}_{F}&-1&-1\\
(\mathrm{C}_{\mathrm{B}'}')&\mathcal{O}_{F}&1&-1\\
(\mathrm{D}')&\mathfrak{p}_{2}'\mathcal{O}_{F}&0&-2\\\hline
\end{array}
$\label{tbl:c-g17}
\end{table}

We have $\mathfrak{f}_{\chi_{-4}^{F}}=\mathfrak{f}_{\rho_{2}^{F}}\mathfrak{f}_{\rho_{2}'^{F}}$. The equality $\tau_{F}(\widetilde{\chi}_{-4}^{F})=\tau_{F}(\widetilde{\rho}_{2}^{F})\tau_{F}(\widetilde{\rho}_{2'}^{F})$ holds, since $\mathfrak{f}_{\rho_{2}^{F}},\mathfrak{f}_{\rho_{2}'^{F}}$ are squares in $\mathcal{O}_{F}$ and since $\rho_{2}^{F},\rho_{2}'^{F}$ are real characters. We have 
\begin{align*}
\chi_{-4}^{K}\psi_{\alpha}=\begin{cases}\psi_{-\alpha}\mathbf{1}_{2}&(\mathrm{C}_{\mathrm{A}}\mathrm{C}_{\mathrm{A}'}'),(\mathrm{C}_{\mathrm{A}}\mathrm{C}_{\mathrm{B}'}'),(\mathrm{C}_{\mathrm{B}}\mathrm{C}_{\mathrm{A}'}'),(\mathrm{C}_{\mathrm{B}}\mathrm{C}_{\mathrm{B}'}'),\\
\psi_{-\alpha}\mathbf{1}_{\mathfrak{p}_{2}}&(\mathrm{C}_{\mathrm{A}}X'),(\mathrm{C}_{\mathrm{B}}X')\mbox{\,with\,}X'\ne\mathrm{C}_{\mathrm{A}'}',\mathrm{C}_{\mathrm{B}'}',\\
\psi_{-\alpha}\mathbf{1}_{\mathfrak{p}_{2}'}&(X\mathrm{C}_{\mathrm{A}'}'),(X\mathrm{C}_{\mathrm{B}'}')\mbox{\,with\,}X\ne\mathrm{C}_{\mathrm{A}},\mathrm{C}_{\mathrm{B}},\\
\psi_{-\alpha}&(\mbox{otherwise}),
\end{cases}
\end{align*}
and 
\begin{align}
L_{F}(s,\chi_{-4}^{F})&=L_{K}(s,\chi_{-4}^{K})L_{K}(s,\psi_{-\alpha})(1-\psi_{-\alpha}(\mathfrak{p}_{2})2^{-s})(1-\psi_{-\alpha}(\mathfrak{p}_{2}')2^{-s}),\nonumber\\
L_{F}(s,\widetilde{\chi}_{-4}^{F})&=L_{K}(s,\chi_{-4}^{K})L_{K}(s,\psi_{-\alpha}).\label{eqn:vl17}
\end{align}
In particular $L_{F}(0,\chi_{-4}^{F})=0$ in Case $(\mathrm{C}_{\mathrm{A}})$ or $(\mathrm{C}_{\mathrm{A}'}')$. The standard argument shows the following lemma, and we omit the proof.
\begin{lemma}\label{lem:elem17} (i) If $\nu\equiv7\pmod{\mathfrak{p}_{2}^{3}}$ or $\nu\equiv7\pmod{\mathfrak{p}_{2}'^{3}}$, then $r_{3,K}(\nu)=0$.

(ii) The equalities $r_{3,K}(\pi_{2}^{2}\nu)=r_{3,K}(\nu)$ and $r_{3,K}(\pi_{2}'^{2}\nu)=r_{3,K}(\nu)$ hold.
\end{lemma}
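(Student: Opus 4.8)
The plan is to reduce both assertions to the classical $2$-adic behaviour of sums of three squares, transplanted to the two primes above $2$. First I would record the local picture at $\mathfrak{p}_2$, the prime $\mathfrak{p}_2'$ being entirely symmetric. Since $(2)=\mathfrak{p}_2\mathfrak{p}_2'$ with $\mathfrak{p}_2\ne\mathfrak{p}_2'$ and $\mathrm{N}(\mathfrak{p}_2)=2$, one has $v_{\mathfrak{p}_2}(2)=1$ and residue field $\mathbf{Z}/2$. Hence $2\in\mathfrak{p}_2\setminus\mathfrak{p}_2^2$, so the images of $1,2,4$ generate the successive quotients of the filtration $\mathcal{O}_K\supset\mathfrak{p}_2\supset\mathfrak{p}_2^2\supset\mathfrak{p}_2^3$; comparing with the orders $\mathrm{N}(\mathfrak{p}_2^j)=2^j$ yields $\mathcal{O}_K/\mathfrak{p}_2^2\cong\mathbf{Z}/4$ and $\mathcal{O}_K/\mathfrak{p}_2^3\cong\mathbf{Z}/8$, the reduction maps restricting on $\mathbf{Z}$ to the usual ones. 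The one point carrying content is precisely this identification: it says that the congruence conditions on $\nu$ modulo powers of $\mathfrak{p}_2$ are literally the rational conditions modulo $4$ and $8$, and that the squares realized in $\mathcal{O}_K/\mathfrak{p}_2^3$ are exactly $\{0,1,4\}\subset\mathbf{Z}/8$.

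For (i), suppose $\nu\equiv 7\pmod{\mathfrak{p}_2^3}$ admitted a representation $\nu=x_1^2+x_2^2+x_3^2$ with $x_i\in\mathcal{O}_K$. Reducing modulo $\mathfrak{p}_2^3$ and using $\mathcal{O}_K/\mathfrak{p}_2^3\cong\mathbf{Z}/8$, each $x_i^2$ lies in $\{0,1,4\}$, and a direct check shows that no sum of three such residues equals $7\pmod 8$; this contradiction gives $r_{3,K}(\nu)=0$. This is nothing but Gauss's ``$7\bmod 8$'' obstruction read off at $\mathfrak{p}_2$. The case $\nu\equiv 7\pmod{\mathfrak{p}_2'^3}$ is identical with $\mathfrak{p}_2'$ in place of $\mathfrak{p}_2$.

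For (ii), I would exhibit a bijection between the representations of $\nu$ and those of $\pi_2^2\nu$. Multiplication by $\pi_2$ sends a solution of $y_1^2+y_2^2+y_3^2=\nu$ to the solution $(\pi_2 y_1,\pi_2 y_2,\pi_2 y_3)$ of $x_1^2+x_2^2+x_3^2=\pi_2^2\nu$, injectively. Conversely, given such a representation of $\pi_2^2\nu$, reduce modulo $\mathfrak{p}_2^2$, where $\mathcal{O}_K/\mathfrak{p}_2^2\cong\mathbf{Z}/4$: as $v_{\mathfrak{p}_2}(\pi_2^2\nu)\ge 2$ the right side is $\equiv 0$, and since the squares modulo $4$ are $\{0,1\}$ the only way three of them sum to $0$ is for all to vanish, forcing each $x_i$ even, i.e.\ $v_{\mathfrak{p}_2}(x_i)\ge 1$. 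Because $\mathcal{O}_K$ has class number $1$ and $\pi_2$ is a prime element, this means $\pi_2\mid x_i$ in $\mathcal{O}_K$; writing $x_i=\pi_2 y_i$ and cancelling $\pi_2^2$ gives $y_1^2+y_2^2+y_3^2=\nu$. The two maps are mutually inverse, so $r_{3,K}(\pi_2^2\nu)=r_{3,K}(\nu)$, and the same argument at $\mathfrak{p}_2'$ gives $r_{3,K}(\pi_2'^2\nu)=r_{3,K}(\nu)$.

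The argument is elementary throughout, and there is no serious obstacle; the only steps requiring care, and hence the ones I would write out in full, are the ring identifications $\mathcal{O}_K/\mathfrak{p}_2^j\cong\mathbf{Z}/2^j$ and the accompanying fact that $\mathfrak{p}_2$-adic divisibility of each $x_i$ upgrades to divisibility by the prime element $\pi_2$ in $\mathcal{O}_K$. Everything else is the transplanted form of the three-square theorem, which explains why the paper treats it as standard.
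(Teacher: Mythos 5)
Your proof is correct: since $2$ splits in $\mathbf{Q}(\sqrt{17})$, the identifications $\mathcal{O}_K/\mathfrak{p}_2^{j}\cong\mathbf{Z}/2^{j}$ are valid, the mod-$8$ obstruction gives (i), and the mod-$4$ descent argument gives the bijection in (ii). The paper states only that ``the standard argument'' proves this lemma and omits the details, and what you have written is exactly that standard argument, so there is nothing to compare beyond noting that your write-up fills the omitted proof correctly.
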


In the following argument  we exclude Cases $(\mathrm{C}_{\mathrm{A}})$ and $(\mathrm{C}_{\mathrm{A}'}')$ because of Lemma \ref{lem:elem17} (i).

Let $\alpha$ be a totally positive square-free integer in $K$. By (\ref{eqn:squaretheta}), we have 
\begin{align}
\label{eqn:sltt17}&\mathscr{S}_{\alpha,\chi_{-4}^{K}}(\theta(\mathfrak{z})^{3})\\
=&2^{-1}\mathscr{S}_{\alpha,\chi_{-4}^{K}}(\theta(\mathfrak{z})G_{1,\chi_{-4}^{K}}(\mathfrak{z};\mathcal{O}_{K},\mathcal{O}_{K}))+2^{-1}\mathscr{S}_{\alpha,\chi_{-4}^{K}}(\theta(\mathfrak{z})G_{1,\rho_{2}}^{\rho_{2}'}(\mathfrak{z};\mathcal{O}_{K},\mathcal{O}_{K})), \nonumber
\end{align}
which is a Hilbert modular forms of weight $2$ for $\Gamma_{0}(\mathfrak{d}_{K}^{-1},2\mathfrak{d}_{K})$. Let $\iota:\mathfrak{H}^{2}\longrightarrow\mathfrak{H}^{4}$ be the diagonal map associated with the inclusion of $K$ into $F$. We put
\begin{align*}
&\lambda_{2,\chi_{-4}^{K}}(\mathfrak{z};(\alpha),\mathcal{O}_{K})\\
:=&\begin{cases}
U(2)(G_{1,\chi_{-4}^{F}}(\iota(\mathfrak{z}),\mathcal{O}_{F},\mathfrak{d}_{F/K}^{-1}))&(\mathrm{AA}'),(\mathrm{AB}'),(\mathrm{BA}'),(\mathrm{BB}')\\
U(\pi_{2})(G_{1,\chi_{-4}^{F}}(\iota(\mathfrak{z}),\mathcal{O}_{F},\mathfrak{d}_{F/K}^{-1}))&(\mathrm{AC}_{\mathrm{B}'}'),(\mathrm{AD}'),(\mathrm{BC}_{\mathrm{B}'}'),(\mathrm{BD}')\\
U(\pi_{2}')(G_{1,\chi_{-4}^{F}}(\iota(\mathfrak{z}),\mathcal{O}_{F},\mathfrak{d}_{F/K}^{-1}))&(\mathrm{C}_{\mathrm{B}}\mathrm{A}'),(\mathrm{C}_{\mathrm{B}}\mathrm{B}'),(\mathrm{D}\mathrm{A}'),(\mathrm{D}\mathrm{B}')\\
G_{1,\chi_{-4}^{F}}(\iota(\mathfrak{z}),\mathcal{O}_{F},\mathfrak{d}_{F/K}^{-1})&(\mbox{otherwise})
\end{cases}
\intertext{and}
&\lambda_{2,\rho_{2}}^{\rho_{2}'}(\mathfrak{z};(\alpha),\mathcal{O}_{K})\\
:=&\begin{cases}
U(2)(G_{1,\rho_{2}^{F}}^{\rho_{2}'^{F}}(\iota(\mathfrak{z}),\mathcal{O}_{F},\mathfrak{d}_{F/K}^{-1}))&(\mathrm{AA}'),(\mathrm{AB}'),(\mathrm{BA}'),(\mathrm{BB}')\\
U(\pi_{2})(G_{1,\rho_{2}^{F}}^{\rho_{2}'^{F}}(\iota(\mathfrak{z}),\mathcal{O}_{F},\mathfrak{d}_{F/K}^{-1}))&(\mathrm{AC}_{\mathrm{B}'}'),(\mathrm{AD}'),(\mathrm{BC}_{\mathrm{B}'}'),(\mathrm{BD}')\\
U(\pi_{2}')(G_{1,\rho_{2}^{F}}^{\rho_{2}'^{F}}(\iota(\mathfrak{z}),\mathcal{O}_{F},\mathfrak{d}_{F/K}^{-1}))&(\mathrm{C}_{\mathrm{B}}\mathrm{A}'),(\mathrm{C}_{\mathrm{B}}\mathrm{B}'),(\mathrm{D}\mathrm{A}'),(\mathrm{D}\mathrm{B}')\\
G_{1,\rho_{2}^{F}}^{\rho_{2}'^{F}}(\iota(\mathfrak{z}),\mathcal{O}_{F},\mathfrak{d}_{F/K}^{-1})&(\mbox{otherwise}).
\end{cases}
\end{align*}
By \cite{Tsuyumine4}, we have $\mathscr{S}_{\alpha,\chi_{-4}^{K}}(\theta(\mathfrak{z})G_{1,\chi_{-4}^{K}}(\mathfrak{z};\mathcal{O}_{K},\mathcal{O}_{K}))=2^{-2}\lambda_{2,\chi_{-4}^{K}}(\mathfrak{z};(\alpha),\mathcal{O}_{K})$, \linebreak$\mathscr{S}_{\alpha,\chi_{-4}^{K}}(\theta(\mathfrak{z})G_{1,\rho_{2}}^{\rho_{2}'}(\mathfrak{z};\mathcal{O}_{K},\mathcal{O}_{K}))=2^{-2}\lambda_{2,\rho_{2}}^{\rho_{2}'}(\mathfrak{z};(\alpha),\mathcal{O}_{K})$. We can obtain the values at cusps of Eisenstein series on $F$ by (\ref{mvc1}) and (\ref{mvc2}), which are all rational multiples of $L_{F}(0,\widetilde{\chi}_{-4}^{F})$, and hence we obtain the values at cusps of $\lambda_{2,\chi_{-4}^{K}}(\mathfrak{z};(\alpha),\mathcal{O}_{K})$, $\lambda_{2,\rho_{2}}^{\rho_{2}'}(\mathfrak{z};(\alpha),\mathcal{O}_{K})$ by using \cite{Tsuyumine4} Lemma 8.2. Here in Table  \ref{tbl:qeK17-2}, we make a rougher classification of $\alpha$ than Table \ref{tbl:qeK17}, where the cases that $\alpha\equiv7\pmod{\mathfrak{p}_{2}^{3}}$ and that $\alpha\equiv7\pmod{\mathfrak{p}_{2}'^{3}}$, are omitted. Then the equation (\ref{eqn:sltt17}) gives the values at cusps, of $\mathscr{S}_{\alpha,\chi_{-4}^{K}}(\theta(\mathfrak{z})^{3})$ as in Table \ref{tbl:vtacst17}.
\begin{table}[htb]
\caption{Classification of $\alpha$.}
$\begin{array}{cl}\hline
(\mathrm{E})&\alpha\not\equiv 3\pmod{\mathfrak{p}_{2}^{3}}\mbox{\ and\ }\alpha\not\equiv 3\pmod{\mathfrak{p}_{2}'^{3}},\\
(\mathrm{F})&\mbox{only one of two congruences\ }\alpha\equiv 3\pmod{\mathfrak{p}_{2}^{3}},\,\alpha\equiv 3\pmod{\mathfrak{p}_{2}'^{3}}\mbox{\ holds},\\
(\mathrm{G})&\alpha\equiv3\pmod{8}.\\\hline
\end{array}
$\label{tbl:qeK17-2}
\end{table}
\begin{table}[htb]
\caption{Values of $L_{F}(0,\widetilde{\chi}_{-4}^{F})^{-1}\mathscr{S}_{\alpha,\chi_{-4}^{K}}(\theta(\mathfrak{z})^{3})$ at cusps in $\mathcal{C}_{0}(2)$.}
$\begin{array}{c|ccccl}
\mathcal{C}_{0}(2)&\tfrac{1}{2\sqrt{17}}&\tfrac{1}{\pi_{2}'\sqrt{17}}&\tfrac{1}{\pi_{2}\sqrt{17}}&\tfrac{1}{\sqrt{17}}&\\\hline
\smash{\raisebox{-1.2em}{$\tfrac{\mathscr{S}_{\alpha,\chi_{-4}^{K}}(\theta(\mathfrak{z})^{3})}{L_{F}(0,\widetilde{\chi}_{-4}^{F})}$}}
&2^{-3}&-2^{-4}&-2^{-4}&2^{-5}&(\mathrm{E})\\
&2^{-2}&-2^{-3}&-2^{-3}&2^{-4}&(\mathrm{F})\\
&2^{-1}&-2^{-2}&-2^{-2}&2^{-3}&(\mathrm{G})
\end{array}
$\label{tbl:vtacst17}
\end{table}

\section{Sums of three squares in $\mathbf{Q}(\sqrt{17})$}
The group $\Gamma_{0}(\mathfrak{d}_{K}^{-1},2\mathfrak{d}_{K})$ has $4$ elliptic fixed points on $\Gamma_{0}(\mathfrak{d}_{K}^{-1},2\mathfrak{d}_{K})\backslash\mathfrak{H}^{2}$, which are all of order $2$. Since $K$ has a fundamental unit with negative norm, the arithmetic genus of a compactified Hilbert modular surface for $\Gamma_{0}(\mathfrak{d}_{K}^{-1},2\mathfrak{d}_{K})$ is obtained from the volume of fundamental domain and from contributions from elliptic singularities, and it is computed to be $2$, and hence $\dim\mathbf{S}_{2}(\Gamma_{0}(\mathfrak{d}_{K}^{-1},2\mathfrak{d}_{K}))=1$. For $\Xi(\mathfrak{z})$ in Sect.~\ref{sect:hm17}, we have $\mathbf{S}_{2}(\Gamma_{0}(\mathfrak{d}_{K}^{-1},2\mathfrak{d}_{K}))=\langle\Xi(\mathfrak{z})\rangle$.

By (\ref{mvc1}) and (\ref{mvc2}), and by using the fact $\zeta_{K}(-1)=1/3$ and $L(-1,\chi_{17})=-4$, the values at cusps, of Eisenstein series $G_{2}(\mathfrak{z};\mathcal{O}_{K},\mathcal{O}_{K}),G_{2,\mathbf{1}_{\mathfrak{p}_{2}}}(\mathfrak{z};\mathcal{O}_{K},\mathcal{O}_{K})$,\linebreak$G_{2,\mathbf{1}_{\mathfrak{p}_{2}'}}(\mathfrak{z};\mathcal{O}_{K},\mathcal{O}_{K})$, $G_{2,\mathbf{1}_{2}}(\mathfrak{z};\mathcal{O}_{K},\mathcal{O}_{K})$ are given as in Table \ref{tbl:ve2ac17}. The table shows that these Eisenstein series are linearly independent, and $\mathbf{M}(\Gamma_{0}(\mathfrak{d}_{K}^{-1},2\mathfrak{d}_{K}))=\langle G_{2}(\mathfrak{z};\mathcal{O}_{K}$, $\mathcal{O}_{K})$, $G_{2,\mathbf{1}_{\mathfrak{p}_{2}}}(\mathfrak{z};\mathcal{O}_{K},\mathcal{O}_{K})$, $G_{2,\mathbf{1}_{\mathfrak{p}_{2}'}}(\mathfrak{z};\mathcal{O}_{K},\mathcal{O}_{K})$, $G_{2,\mathbf{1}_{2}}(\mathfrak{z};\mathcal{O}_{K},\mathcal{O}_{K}),\Xi(\mathfrak{z})\rangle$.
\begin{table}[htb]
\caption{Values at cusps, of Eisenstein series of weight $2$.}
$\begin{array}{c|cccc}
\mathcal{C}_{0}(2)&\tfrac{1}{2\sqrt{17}}&\tfrac{1}{\pi_{2}'\sqrt{17}}&\tfrac{1}{\pi_{2}\sqrt{17}}&\tfrac{1}{\sqrt{17}}\\\hline
G_{2}(\mathfrak{z};\mathcal{O}_{K},\mathcal{O}_{K})&3^{-1}&3^{-1}&3^{-1}&3^{-1}\\
G_{2,\mathbf{1}_{\mathfrak{p}_{2}}}(\mathfrak{z};\mathcal{O}_{K},\mathcal{O}_{K})&-3^{-1}&2^{-1}3^{-1}&-3^{-1}&2^{-1}3^{-1}\\
G_{2,\mathbf{1}_{\mathfrak{p}_{2}'}}(\mathfrak{z};\mathcal{O}_{K},\mathcal{O}_{K})&-3^{-1}&-3^{-1}&2^{-1}3^{-1}&2^{-1}3^{-1}\\
G_{2,\mathbf{1}_{2}}(\mathfrak{z};\mathcal{O}_{K},\mathcal{O}_{K})&3^{-1}&-2^{-1}3^{-1}&-2^{-1}3^{-1}&2^{-2}3^{-1}
\end{array}
$\label{tbl:ve2ac17}
\end{table}

Let $\alpha$ be a totally positive square-free integer in $K$. The Shimura lift $\mathscr{S}_{\alpha,\chi_{-4}^{K}}(\theta(\mathfrak{z})^{3})$ is in $\mathbf{M}_{2}(\Gamma_{0}(\mathfrak{d}_{K}^{-1},2\mathfrak{d}_{K}))$, and by Table \ref{tbl:vtacst17} and Table \ref{tbl:ve2ac17}, it is equal to $2^{-3}3L_{F}(0,\widetilde{\chi}_{-4}^{F})$ $\times G_{2,\mathbf{1}_{2}}(\mathfrak{z};\mathcal{O}_{K},\mathcal{O}_{K})+c_{\alpha}\Xi(\mathfrak{z})$ in Case $(\mathrm{E})$, $2^{-2}3L_{F}(0,\widetilde{\chi}_{-4}^{F})G_{2,\mathbf{1}_{2}}(\mathfrak{z};\mathcal{O}_{K},\mathcal{O}_{K})+c_{\alpha}\Xi(\mathfrak{z})$ in Case $(\mathrm{F})$, and  $2^{-1}3L_{F}(0,\widetilde{\chi}_{-4}^{F})G_{2,\mathbf{1}_{2}}(\mathfrak{z};\mathcal{O}_{K},\mathcal{O}_{K})+c_{\alpha}\Xi(\mathfrak{z})$ in Case $(\mathrm{G})$ where $c_{\alpha}$ is a constant depending only on $\alpha$.
\begin{lemma} In all cases, $c_{\alpha}$ is $0$.
\end{lemma}
\begin{proof} We consider the case $(\mathrm{E})$. We note that the Fourier coefficients of $\Xi(\mathfrak{z})$ for $\nu=1$ and for $\nu=\pi_{2}$ are $1$ and $-1$ respectively by Table \ref{tbl:vcac17}. Comparing the Fourier coefficients of the equality $\mathscr{S}_{\alpha,\chi_{-4}^{K}}(\theta(\mathfrak{z})^{3})=2^{-3}3L_{F}(0,\widetilde{\chi}_{-4}^{F})G_{2,\mathbf{1}_{2}}(\mathfrak{z};\mathcal{O}_{K},\mathcal{O}_{K})+c_{\alpha}\Xi(\mathfrak{z})$ for $\nu=1$ and for $\nu=\pi_{2}$, we have $r_{3,K}(\alpha)=2^{-1}3L_{F}(0,\widetilde{\chi}_{-4}^{F})+c_{\alpha}$ and $r_{3,K}(\pi_{2}^{2}\alpha)=2^{-1}3L_{F}(0,\widetilde{\chi}_{-4}^{F})-c_{\alpha}$. Then Lemma \ref{lem:elem17} (ii) leads to $c_{\alpha}=0$. The similar arguments shows the assertion also in the rest of cases.
\end{proof}
\begin{corollary} The Shimura lift $\mathscr{S}_{\alpha,\chi_{-4}^{K}}(\theta(\mathfrak{z})^{3})$ is equal to
\begin{align}
\mathscr{S}_{\alpha,\chi_{-4}^{K}}(\theta(\mathfrak{z})^{3})=2^{-3}3\,c\,L_{F}(0,\widetilde{\chi}_{-4}^{F})G_{2,\mathbf{1}_{2}}(\mathfrak{z};\mathcal{O}_{K},\mathcal{O}_{K}) \label{eqn:s-e}
\end{align}
where $c=1$ in Case $(\mathrm{E})$, $c=2$ in Case $(\mathrm{F})$ and $c=2^2$ in Case $(\mathrm{G})$.
\end{corollary}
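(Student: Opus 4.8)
The plan is to obtain the corollary as a short bookkeeping consequence of two facts already in hand: the decomposition of $\mathscr{S}_{\alpha,\chi_{-4}^{K}}(\theta(\mathfrak{z})^{3})$ recorded in the paragraph preceding the last lemma, and the vanishing $c_{\alpha}=0$ established by that lemma. First I would recall why the decomposition holds. By the arithmetic genus computation one has $\dim\mathbf{S}_{2}(\Gamma_{0}(\mathfrak{d}_{K}^{-1},2\mathfrak{d}_{K}))=1$ with $\mathbf{S}_{2}=\langle\Xi(\mathfrak{z})\rangle$, and Table \ref{tbl:ve2ac17} exhibits $G_{2},G_{2,\mathbf{1}_{\mathfrak{p}_{2}}},G_{2,\mathbf{1}_{\mathfrak{p}_{2}'}},G_{2,\mathbf{1}_{2}}$ as linearly independent; hence these five forms span $\mathbf{M}_{2}(\Gamma_{0}(\mathfrak{d}_{K}^{-1},2\mathfrak{d}_{K}))$, into which the Shimura lift falls.

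Next I would pin the Eisenstein component down by matching values at cusps. Since $\Xi(\mathfrak{z})$ vanishes at every cusp in $\mathcal{C}_{0}(2)$, the values of $\mathscr{S}_{\alpha,\chi_{-4}^{K}}(\theta(\mathfrak{z})^{3})$ listed in Table \ref{tbl:vtacst17} equal those of its Eisenstein part. Solving the resulting linear system against the cusp values in Table \ref{tbl:ve2ac17}, one finds that in each of the Cases $(\mathrm{E}),(\mathrm{F}),(\mathrm{G})$ the cusp-value vector is a scalar multiple of that of $G_{2,\mathbf{1}_{2}}(\mathfrak{z};\mathcal{O}_{K},\mathcal{O}_{K})$ alone, forcing the coefficients of $G_{2},G_{2,\mathbf{1}_{\mathfrak{p}_{2}}},G_{2,\mathbf{1}_{\mathfrak{p}_{2}'}}$ to be zero; concretely, the Case $(\mathrm{E})$ pattern $(2^{-3},-2^{-4},-2^{-4},2^{-5})$ is exactly $2^{-3}3$ times the row $(3^{-1},-2^{-1}3^{-1},-2^{-1}3^{-1},2^{-2}3^{-1})$ of $G_{2,\mathbf{1}_{2}}$, and similarly in the other two cases. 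This reproduces the expression $2^{-3}3\,L_{F}(0,\widetilde{\chi}_{-4}^{F})G_{2,\mathbf{1}_{2}}+c_{\alpha}\Xi$ (and its Case $(\mathrm{F}),(\mathrm{G})$ analogues with $2^{-2}3,2^{-1}3$) stated above the lemma.

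Finally I would invoke the preceding lemma, which gives $c_{\alpha}=0$ in every case through the equalities $r_{3,K}(\pi_{2}^{2}\alpha)=r_{3,K}(\alpha)$ of Lemma \ref{lem:elem17}(ii); this kills the cusp-form term and leaves a pure Eisenstein series. It then remains only to merge the three coefficients into one: since $2^{-3}3=2^{-3}3\cdot1$, $2^{-2}3=2^{-3}3\cdot2$, and $2^{-1}3=2^{-3}3\cdot2^{2}$, the common factor is $2^{-3}3\,c$ with $c=1,2,2^{2}$ in Cases $(\mathrm{E}),(\mathrm{F}),(\mathrm{G})$, which is precisely (\ref{eqn:s-e}). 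The substantive work is all upstream in Table \ref{tbl:vtacst17} and in the vanishing of $c_{\alpha}$; the only place calling for care within the corollary itself is verifying that the Eisenstein part is a single multiple of $G_{2,\mathbf{1}_{2}}$ rather than a genuine four-term combination, which follows from the linear independence of the cusp-value rows in Table \ref{tbl:ve2ac17}.
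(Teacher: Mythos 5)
Your proposal is correct and follows essentially the same route as the paper: the decomposition into $2^{-3}3\,c\,L_{F}(0,\widetilde{\chi}_{-4}^{F})G_{2,\mathbf{1}_{2}}+c_{\alpha}\Xi$ is read off by matching the cusp values of Table \ref{tbl:vtacst17} against Table \ref{tbl:ve2ac17} (the cusp form $\Xi$ contributing nothing at the cusps), and the preceding lemma kills $c_{\alpha}$. The paper treats the corollary as an immediate consequence of exactly these two ingredients, so nothing is missing.
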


By (\ref{eqn:vl17}), $L_{F}(0,\widetilde{\chi}_{-4}^{F})=2L_{K}(0,\psi_{-\alpha})$. Let $h_{K(\sqrt{-\alpha})},w_{K(\sqrt{-\alpha})},Q_{K(\sqrt{-\alpha})/K}$ be as in Sect.~\ref{sect:sts}. Then $w_{K(\sqrt{-\alpha})}$ is $6$ if $\sqrt{-3}\in K(\sqrt{-\alpha})$, $4$ if $\sqrt{-1}\in K(\sqrt{-\alpha})$, and $2$ if otherwise. The Hasse unit index $Q_{K(\sqrt{-\alpha})/K}$ is always $1$. By $L_{K}(0,\psi_{-\alpha})=\frac{2^{2}}{w_{K(\sqrt{-\alpha})}Q_{K(\sqrt{-\alpha})/K}}h_{K(\sqrt{-\alpha})}$, $L_{F}(0,\widetilde{\chi}_{-4}^{F})$ is expressed in terms of $h_{K(\sqrt{-\alpha})}$. Applying the M\"obius inversion formula on $K$ to the equality between $\nu$-th terms of the both sides of (\ref{eqn:s-e}), we have for $\alpha$ with $K(\sqrt{-\alpha})\ne K(\sqrt{-1}),K(\sqrt{-3})$, 
\begin{align*}
r_{3,K}(\alpha\nu^{2})
=2\cdot3\,c\,h_{K(\sqrt{-\alpha})}\sum_{0\prec\delta|\nu,\delta\in\mathcal{O}_{K}/\mathcal{E}_{\mathcal{O}_{K}}}(\psi_{-\alpha}\mathbf{1}_{2}\mu_{K})(\delta)\sigma_{1,\mathbf{1}_{2}}(\nu/\delta)\hspace{1em}(\nu\in\mathcal{O}_{K},\succ0)
\end{align*}
where $c=1$ in Case $(\mathrm{E})$, $c=2$ in Case $(\mathrm{F})$ and $c=2^2$ in Case $(\mathrm{G})$ and where $\sigma_{1,\mathbf{1}_{2}}$ is as in (\ref{def:sigma1}). When $\alpha=1$ or $\alpha=3$, we have
\begin{align*}
r_{3,K}(\nu^{2})
=3h_{K(\sqrt{-1})}\sum_{\delta|\nu,\delta\succ0,\delta/\mathcal{E}_{\mathcal{O}_{K}}}(\psi_{-1}\mathbf{1}_{2}\mu_{K})(\delta)\sigma_{1,\mathbf{1}_{2}}(\nu/\delta)\hspace{1em}(\nu\in\mathcal{O}_{K},\succ0),\\
r_{3,K}(3\nu^{2})
=2^{3}h_{K(\sqrt{-3})}\sum_{\delta|\nu,\delta\succ0,\delta/\mathcal{E}_{\mathcal{O}_{K}}}(\psi_{-3}\mathbf{1}_{2}\mu_{K})(\delta)\sigma_{1,\mathbf{1}_{2}}(\nu/\delta)\hspace{1em}(\nu\in\mathcal{O}_{K},\succ0),
\end{align*}
where $h_{K(\sqrt{-1})}=2,h_{K(\sqrt{-3})}=1$. For any $\nu\in\mathcal{O}_{K},\ne0$, one of $\pm\nu,\pm\varepsilon_{0}\nu$ is totally positive and hence the formula for $r_{3,K}(\alpha\nu^{2})$ for any $\nu$ is obtained from the above formulas since $r_{3,K}(\alpha\nu^{2})=r_{3,K}(\varepsilon_{0}^{2}\alpha\nu^{2})$.

We have shown the following;
\begin{theorem}\label{thm:sts17}Let $K=\mathbf{Q}(\sqrt{17})$. Let $\pi_{2}=(5+\sqrt{17})/2,\pi_{2}'=(5-\sqrt{17})/2$, and $\mathfrak{p}_{2}=(\pi_{2}),\mathfrak{p}_{2}'=(\pi_{2}')$.

(i) A totally positive integer in $K$ is represented as a sum of three integer squares in $K$ if and only if it is not in the form $\pi_{2}^{2e}\pi_{2}'^{2e'}\mu$ with nonnegative rational integers $e,e'$ and with $\mu\equiv7\pmod{\mathfrak{p}_{2}^{3}}$ or $\mu\equiv7\pmod{\mathfrak{p}_{2}'^{3}}$.

(ii) Let $\alpha$ be a totally positive square-free integer in $K$ which is congruent to $7$ neither modulo $\mathfrak{p}_{2}^{3}$ nor modulo $\mathfrak{p}_{2}'^{3}$. Further we assume that $K(\sqrt{-\alpha})\ne K(\sqrt{-1}),K(\sqrt{-3})$. We classify $\alpha$ as in Table \ref{tbl:qeK17-2}. Then the class number of the field $K(\sqrt{-\alpha})$ is given by $2^{-1}3^{-1}r_{3,K}(\alpha)$ in Case $(\mathrm{E})$, $2^{-2}3^{-1}r_{3,K}(\alpha)$ in Case $(\mathrm{F})$, and $2^{-3}3^{-1}r_{3,K}(\alpha)$ in Case $(\mathrm{G})$.
\end{theorem}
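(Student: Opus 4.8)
The plan is to derive both parts directly from the representation formulas stated just before the theorem and from Lemma~\ref{lem:elem17}, so that almost no new computation is required.

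For the necessity in part (i) I would observe that if $\nu=\pi_{2}^{2e}\pi_{2}'^{2e'}\mu$ with $\mu\equiv7\pmod{\mathfrak{p}_{2}^{3}}$ (the case modulo $\mathfrak{p}_{2}'^{3}$ being symmetric), then repeated use of Lemma~\ref{lem:elem17}(ii) strips the even powers of $\pi_{2}$ and $\pi_{2}'$ to give $r_{3,K}(\nu)=r_{3,K}(\mu)$, while Lemma~\ref{lem:elem17}(i) gives $r_{3,K}(\mu)=0$. For the sufficiency I would write a totally positive $\nu$ as $\nu=\alpha\tau^{2}$ with $\alpha$ square-free; here $\alpha$ is automatically totally positive because $\tau^{2}\succ0$. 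Writing $\tau=\pi_{2}^{a}\pi_{2}'^{b}\tau_{0}$ with $\tau_{0}$ coprime to $2$ and applying Lemma~\ref{lem:elem17}(ii) again yields $r_{3,K}(\nu)=r_{3,K}(\alpha\tau_{0}^{2})$. The small local fact needed here is that $(\mathcal{O}_{K}/\mathfrak{p}_{2}^{3})^{\times}\cong(\mathbf{Z}/8)^{\times}$ has exponent $2$, so $\tau_{0}^{2}\equiv1\pmod{\mathfrak{p}_{2}^{3}}$ and likewise $\tau_{0}^{2}\equiv1\pmod{\mathfrak{p}_{2}'^{3}}$; hence $\alpha\tau_{0}^{2}\equiv\alpha$ modulo both $\mathfrak{p}_{2}^{3}$ and $\mathfrak{p}_{2}'^{3}$.

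With this the dichotomy becomes clean. If the square-free part $\alpha$ is $\equiv7$ modulo $\mathfrak{p}_{2}^{3}$ or $\mathfrak{p}_{2}'^{3}$, then $\nu=\pi_{2}^{2a}\pi_{2}'^{2b}(\alpha\tau_{0}^{2})$ is of the excluded form; so if $\nu$ is \emph{not} of the excluded form, then $\alpha\not\equiv7$ modulo either prime power, placing $\alpha$ in one of Cases $(\mathrm{E}),(\mathrm{F}),(\mathrm{G})$ of Table~\ref{tbl:qeK17-2} (or making it $1$ or $3$). In every such case the formulas before the theorem give $r_{3,K}(\alpha)=2\cdot3\,c\,h_{K(\sqrt{-\alpha})}>0$, with $r_{3,K}(1),r_{3,K}(3)>0$ checked directly. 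Since $\alpha$ is then a sum of three squares, multiplying each square by $\tau$ exhibits $\alpha\tau^{2}=\nu$ as a sum of three squares, so $r_{3,K}(\nu)>0$.

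For part (ii) I would specialize the displayed formula preceding the theorem to $\nu=1$: the divisor sum collapses to its $\delta=1$ term and $\sigma_{1,\mathbf{1}_{2}}(1)=1$, giving $r_{3,K}(\alpha)=2\cdot3\,c\,h_{K(\sqrt{-\alpha})}$ with $c=1,2,2^{2}$ in Cases $(\mathrm{E}),(\mathrm{F}),(\mathrm{G})$; solving for $h_{K(\sqrt{-\alpha})}$ produces the three stated expressions $2^{-1}3^{-1}r_{3,K}(\alpha)$, $2^{-2}3^{-1}r_{3,K}(\alpha)$, $2^{-3}3^{-1}r_{3,K}(\alpha)$. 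The hypotheses $\alpha\not\equiv7$ modulo $\mathfrak{p}_{2}^{3},\mathfrak{p}_{2}'^{3}$ and $K(\sqrt{-\alpha})\ne K(\sqrt{-1}),K(\sqrt{-3})$ are precisely what guarantees that the generic formula (Hasse unit index $1$ and $w_{K(\sqrt{-\alpha})}=2$) applies rather than the special values attached to $\alpha=1,3$. The only genuinely non-formal step is the local computation together with the bookkeeping that representability of $\nu$ depends only on the residue of its square-free part modulo $\mathfrak{p}_{2}^{3}$ and $\mathfrak{p}_{2}'^{3}$; verifying that this matches the excluded form exactly is the main obstacle, whereas everything else is substitution into results already established.
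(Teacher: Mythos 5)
Your proposal is correct and follows essentially the same route as the paper, which states the theorem with ``We have shown the following'' immediately after deriving the formula $r_{3,K}(\alpha\nu^{2})=2\cdot3\,c\,h_{K(\sqrt{-\alpha})}\sum_{\delta}(\psi_{-\alpha}\mathbf{1}_{2}\mu_{K})(\delta)\sigma_{1,\mathbf{1}_{2}}(\nu/\delta)$ from the Shimura-lift identity and Lemma~\ref{lem:elem17}; part (ii) and the necessity in (i) are exactly the specializations you describe. Your only addition is to make the sufficiency in (i) explicit --- reducing to the square-free part via the fact that $(\mathcal{O}_{K}/\mathfrak{p}_{2}^{3})^{\times}$ has exponent $2$ and then scaling a representation of $\alpha$ by $\tau$ --- a step the paper leaves implicit, and your version of it is sound.
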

\section{Tables of class numbers}
A tabulation for class numbers of totally imaginary quadratic extensions $F=K(\sqrt{-\alpha})$ of $K$ is made for $220$ selected values of $\alpha$ in each case of $K=\mathbf{Q}(\sqrt{3})$ and $K=\mathbf{Q}(\sqrt{17})$. 

In case $K=\mathbf{Q}(\sqrt{3})$, square-free totally positive integers $\alpha=a+b\sqrt{3}$ with $b\ge0$ are arranged in lexicographical order, where we omit $\alpha$ with $b<0$ since rings of integers in $F$ for $\alpha=a+b\sqrt{3}$ and for $\alpha=a-b\sqrt{3}$ are isomorphic to each other and they have the same class number. We omit $a+b\sqrt{3}$ from the table if there is $n\in\mathbf{Z}$ so that $a+b\sqrt{3}=\varepsilon_{0}^{2n}(a'\pm b'\sqrt{3})$ with $a'<a$. By Theorem \ref{thm:sts}, a class number $h_{F}$ is obtained from $r_{3,K}(\alpha)$ or $r_{3,K}(4\alpha)$, and $r_{3,K}(a+b\sqrt{3})$ is obtained by counting integral solutions of
\begin{align*}
&x_{1}^{2}+3y_{1}^{2}+x_{2}^{2}+3y_{2}^{2}+x_{3}^{2}+3y_{3}^{2}=a,\\
&2x_{1}y_{1}+2x_{2}y_{2}+2x_{3}y_{3}=b
\end{align*}
in terms of ordinary integral arithmetic.

In case $K=\mathbf{Q}(\sqrt{17})$, square-free totally positive integers $\alpha=a+b\omega\ (\omega=(1+\sqrt{17})/2$ with $b\ge0$ and with $\alpha\not\equiv7\pmod{\mathfrak{p}_{2}^{3}},\alpha\not\equiv7\pmod{\mathfrak{p}_{2}'^{3}}$, are arranged in lexicographical order. A tabulation is made applying  essentially the same principle as in $\mathbf{Q}(\sqrt{3})$, where we note that rings of integers in $F$ for $\alpha=a+b\omega$ and for $\alpha=a+b-b\omega$ are isomorphic to each other. By Theorem \ref{thm:sts17}, a class number $h_{F}$ is obtained from $r_{3,K}(\alpha)$, and $r_{3,K}(a+b\omega)$ is obtained by counting integral solutions of
\begin{align*}
&x_{1}^{2}+4y_{1}^{2}+x_{2}^{2}+4y_{2}^{2}+x_{3}^{2}+4y_{3}^{2}=a,\\
&2x_{1}y_{1}+y_{1}^{2}+2x_{2}y_{2}+y_{2}^{2}+2x_{3}y_{3}+y_{3}^{2}=b.
\end{align*}

\begin{table}[h]
\caption{Table of class numbers of $F=\mathbf{Q}(\sqrt{3},\sqrt{-\alpha})$}
$\alpha=a+b\sqrt{3}$, $d_{F/\mathbf{Q}(\sqrt{3})}=(m\alpha)$, $h_{F}=$ the class number of $F$.
$\begin{array}{r@{\ \,}r@{\ \,}r@{\ \,}r|r@{\ \,}r@{\ \,}r@{\ \,}r|r@{\ \,}r@{\ \,}r@{\ \,}r|r@{\ \,}r@{\ \,}r@{\ \,}r|r@{\ \,}r@{\ \,}r@{\ \,}r}
a&b&m&h_{F}&a&b&m&h_{F}&a&b&m&h_{F}&a&b&m&h_{F}&a&b&m&h_{F}\\\hline
1&0&1&1&16&7&4&6&23&4&1&4&28&13&4&6&33&1&4&28\\
2&1&4&2&17&0&1&4&23&5&4&22&29&0&1&18&33&2&2&12\\
3&1&4&2&17&1&4&16&23&6&2&10&29&1&4&30&33&4&1&14\\
4&1&4&2&17&2&2&10&23&7&4&20&29&2&2&18&33&7&4&20\\
5&0&1&2&17&3&4&22&23&8&1&10&29&3&4&28&33&8&1&4\\
5&1&4&6&17&4&1&8&23&9&4&20&29&4&1&4&33&10&2&8\\
5&2&2&2&17&5&4&14&23&10&2&10&29&5&4&36&33&11&4&16\\
6&1&4&4&17&6&2&6&23&11&4&10&29&6&2&14&33&13&4&12\\
7&0&1&2&17&7&4&12&24&1&4&16&29&7&4&22&33&14&2&8\\
7&1&4&4&17&8&1&2&24&5&4&16&29&8&1&14&33&16&1&2\\
7&2&2&2&18&1&4&16&24&7&4&16&29&9&4&28&34&1&4&24\\
7&3&4&2&18&5&4&12&24&11&4&12&29&10&2&14&34&3&4&20\\
8&1&4&10&18&7&4&8&25&1&4&12&29&11&4&28&34&5&4&16\\
8&3&4&6&19&0&1&2&25&2&2&6&29&12&1&4&34&7&4&20\\
9&1&4&4&19&1&4&10&25&3&4&12&29&13&4&28&34&9&4&16\\
9&2&2&4&19&2&2&6&25&4&1&6&29&14&2&8&34&11&4&12\\
9&4&1&2&19&3&4&8&25&5&4&12&30&1&4&16&34&13&4&12\\
10&1&4&4&19&4&1&6&25&6&2&8&30&5&4&16&34&15&4&12\\
10&3&4&4&19&5&4&8&25&7&4&12&30&7&4&20&34&17&4&8\\
10&5&4&4&19&6&2&4&25&8&1&2&30&11&4&24&35&0&1&8\\
11&0&1&2&19&7&4&10&25&9&4&12&30&13&4&16&35&1&4&44\\
11&1&4&10&19&9&4&6&25&10&2&4&31&0&1&6&35&2&2&18\\
11&2&2&6&20&1&4&14&25&11&4&6&31&1&4&16&35&3&4&40\\
11&3&4&12&20&3&4&18&25&12&1&4&31&2&2&12&35&4&1&18\\
11&4&1&4&20&5&4&20&26&1&4&36&31&3&4&14&35&5&4&40\\
11&5&4&8&20&7&4&20&26&3&4&36&31&4&1&2&35&6&2&14\\
12&1&4&12&20&9&4&18&26&5&4&28&31&5&4&18&35&7&4&24\\
12&5&4&4&21&1&4&16&26&9&4&20&31&6&2&6&35&8&1&6\\
13&0&1&4&21&2&2&8&26&11&4&12&31&7&4&20&35&9&4&22\\
13&1&4&10&21&4&1&2&26&13&4&12&31&8&1&8&35&10&2&20\\
13&2&2&2&21&5&4&16&27&2&2&8&31&9&4&16&35&11&4&44\\
13&3&4&8&21&7&4&12&27&4&1&10&31&10&2&6&35&12&1&16\\
13&5&4&4&21&8&1&6&27&5&4&20&31&11&4&12&35&13&4&28\\
13&6&2&2&21&10&2&4&27&7&4&20&31&13&4&14&35&14&2&12\\
14&1&4&16&22&1&4&12&27&8&1&2&31&14&2&6&35&15&4&28\\
14&7&4&8&22&3&4&12&27&10&2&8&31&15&4&12&35&16&1&4\\
15&1&4&8&22&5&4&12&27&11&4&12&32&1&4&26&35&17&4&18\\
15&2&2&4&22&7&4&8&27&13&4&12&32&3&4&30&36&1&4&20\\
15&4&1&2&22&9&4&8&28&1&4&12&32&5&4&48&36&5&4&32\\
15&5&4&8&22&11&4&8&28&3&4&22&32&7&4&30&36&7&4&24\\
15&7&4&8&23&0&1&12&28&5&4&14&32&9&4&32&36&11&4&16\\
16&1&4&8&23&1&4&32&28&7&4&12&32&11&4&34&36&13&4&24\\
16&3&4&6&23&2&2&12&28&9&4&14&32&13&4&16&36&17&4&16\\
16&5&4&10&23&3&4&22&28&11&4&14&32&15&4&22&37&0&1&8
\end{array}$
\end{table}

\begin{table}[h]
\caption{Table of class numbers of $F=\mathbf{Q}(\sqrt{17},\sqrt{-\alpha})$}
$\alpha=a+b\omega\ (\omega=\frac{1+\sqrt{17}}{2})$, $d_{F/\mathbf{Q}(\sqrt{17})}=(\pi_{2}^{e}\pi_{2}'^{e'}\alpha)$, $h_{F}=$ the class number of $F$.
$\begin{array}{r@{\ }r@{\ \,}r@{\ \,}r|r@{\ }r@{\ \,}r@{\ \,}r|r@{\ }r@{\ \,}r@{\ \,}r|r@{\ }r@{\ }r@{\ \,}r|r@{\ }r@{\ \,}r@{\ \,}r}
a&b&e,e'&h_{F}&a&b&e,e'&h_{F}&a&b&e,e'&h_{F}&a&b&e,e'&h_{F}&a&b&e,e'&h_{F}\\\hline
1&0&2,2&2&18&8&2,2&12&26&13&2,0&4&33&9&2,2&24&38&3&2,2&36\\
2&0&2,2&2&18&11&2,2&4&26&15&2,2&16&33&10&2,0&10&38&4&2,2&24\\
3&0&0,0&1&19&0&0,0&2&26&16&2,2&4&33&12&2,2&32&38&7&2,2&36\\
5&0&2,2&4&19&2&0,2&4&27&2&0,2&8&33&13&2,2&20&38&8&2,2&32\\
5&1&2,2&4&19&6&0,2&6&27&6&0,2&8&33&16&2,2&16&38&9&2,0&12\\
6&0&2,2&4&19&8&0,0&1&27&8&0,0&2&33&17&2,2&16&38&11&2,2&40\\
6&1&2,0&2&19&10&0,2&4&27&10&0,2&8&33&18&2,0&4&38&12&2,2&36\\
6&3&2,2&4&21&0&2,2&16&27&14&0,2&4&33&20&2,2&12&38&15&2,2&28\\
7&3&0,2&2&21&1&2,2&20&27&16&0,0&1&34&3&2,2&24&38&16&2,2&28\\
9&1&2,2&8&21&4&2,2&12&29&0&2,2&36&34&4&2,2&36&38&17&2,0&10\\
9&2&2,0&2&21&5&2,2&24&29&1&2,2&24&34&5&2,0&12&38&19&2,2&16\\
9&4&2,2&4&21&6&2,0&4&29&4&2,2&24&34&7&2,2&24&38&20&2,2&24\\
9&5&2,2&4&21&8&2,2&12&29&5&2,2&28&34&8&2,2&28&38&23&2,2&12\\
10&0&2,2&12&21&9&2,2&16&29&6&2,0&12&34&11&2,2&20&39&3&0,2&12\\
10&3&2,2&8&21&12&2,2&8&29&8&2,2&24&34&12&2,2&24&39&7&0,2&12\\
10&4&2,2&8&22&0&2,2&28&29&9&2,2&28&34&13&2,0&8&39&11&0,2&12\\
10&5&2,0&2&22&1&2,0&6&29&12&2,2&28&34&15&2,2&24&39&15&0,2&12\\
11&0&0,0&1&22&3&2,2&20&29&13&2,2&16&34&19&2,2&20&39&19&0,2&6\\
11&2&0,2&4&22&4&2,2&16&29&14&2,0&8&34&20&2,2&12&39&23&0,2&4\\
11&6&0,2&2&22&7&2,2&24&29&16&2,2&12&35&0&0,0&4&41&0&2,2&32\\
13&0&2,2&16&22&8&2,2&16&29&17&2,2&16&35&2&0,2&12&41&1&2,2&32\\
13&1&2,2&8&22&9&2,0&6&30&0&2,2&32&35&6&0,2&10&41&2&2,0&14\\
13&4&2,2&12&22&11&2,2&12&30&1&2,0&8&35&8&0,0&3&41&4&2,2&32\\
13&5&2,2&12&22&12&2,2&12&30&3&2,2&32&35&10&0,2&8&41&5&2,2&48\\
13&6&2,0&4&23&3&0,2&6&30&4&2,2&24&35&14&0,2&12&41&8&2,2&40\\
13&8&2,2&4&23&7&0,2&8&30&7&2,2&24&35&16&0,0&3&41&9&2,2&48\\
14&0&2,2&16&23&11&0,2&4&30&8&2,2&24&35&18&0,2&8&41&10&2,0&12\\
14&1&2,0&4&25&1&2,2&24&30&9&2,0&8&37&0&2,2&36&41&12&2,2&36\\
14&3&2,2&12&25&2&2,0&6&30&11&2,2&20&37&1&2,2&28&41&13&2,2&28\\
14&4&2,2&8&25&4&2,2&28&30&12&2,2&24&37&4&2,2&44&41&16&2,2&32\\
14&7&2,2&8&25&5&2,2&24&30&15&2,2&16&37&5&2,2&24&41&17&2,2&32\\
14&8&2,2&8&25&8&2,2&16&30&16&2,2&16&37&6&2,0&8&41&18&2,0&10\\
15&3&0,2&4&25&9&2,2&24&30&17&2,0&4&37&8&2,2&28&41&20&2,2&24\\
15&7&0,2&2&25&10&2,0&8&31&3&0,2&12&37&9&2,2&32&41&21&2,2&28\\
17&1&2,2&16&25&12&2,2&20&31&7&0,2&10&37&12&2,2&36&41&25&2,2&8\\
17&2&2,0&6&25&13&2,2&20&31&11&0,2&6&37&13&2,2&28&42&0&2,2&48\\
17&4&2,2&12&26&0&2,2&24&31&15&0,2&8&37&14&2,0&8&42&3&2,2&32\\
17&5&2,2&12&26&3&2,2&24&31&19&0,2&4&37&16&2,2&28&42&4&2,2&40\\
17&9&2,2&8&26&4&2,2&16&33&0&2,2&32&37&17&2,2&20&42&5&2,0&10\\
17&10&2,0&2&26&5&2,0&10&33&1&2,2&24&37&20&2,2&20&42&7&2,2&32\\
18&3&2,2&16&26&7&2,2&24&33&2&2,0&8&37&21&2,2&16&42&8&2,2&36\\
18&4&2,2&12&26&8&2,2&28&33&4&2,2&24&37&22&2,0&4&42&11&2,2&32\\
18&5&2,0&4&26&11&2,2&24&33&5&2,2&20&38&0&2,2&48&42&12&2,2&40\\
18&7&2,2&16&26&12&2,2&16&33&8&2,2&24&38&1&2,0&10&42&13&2,0&12
\end{array}$
\end{table}
\clearpage

\end{document}